\documentclass{article}

\usepackage[T1]{fontenc}
\usepackage[utf8]{inputenc}
\usepackage[british]{babel}

\usepackage{lmodern}

\usepackage{amsmath}
\usepackage{amsthm}
\usepackage{amsfonts}
\usepackage{hyperref}
\hypersetup{
	colorlinks = true,
	linkcolor = purple, 
	filecolor = purple, 
	citecolor = purple,
	urlcolor = purple
}
\usepackage{url} 
\usepackage{amssymb}
\usepackage{tikz-cd}
\usetikzlibrary{babel}
\allowdisplaybreaks

\usepackage[backend=biber, style=alphabetic, sorting=nyt, maxcitenames=7,
mincitenames=1, maxnames=7, abbreviate=true, block=space]{biblatex}

\newtheorem{lemma}{Lemma}[section]
\newtheorem{theorem}[lemma]{Theorem}
\newtheorem{proposition}[lemma]{Proposition}
\newtheorem{corollary}[lemma]{Corollary}

\newtheorem*{theorem*}{Theorem}
\newtheorem*{proposition*}{Proposition}
\newtheorem*{corollary*}{Corollary}

\theoremstyle{definition}
\newtheorem{rem}[lemma]{Remark}
\newtheorem{example}[lemma]{Example}

\newtheorem{definition}[lemma]{Definition}

\newcommand{\OO}{\mathcal{O}}
\newcommand{\A}{\mathbb{A}}
\newcommand{\PP}{\mathbb{P}}

\newcommand{\Z}{\mathbb{Z}}
\newcommand{\Q}{\mathbb{Q}}
\newcommand{\R}{\mathbb{R}}
\newcommand{\C}{\mathbb{C}}
\newcommand{\m}{\mathfrak{m}}

\newcommand{\I}{\mathcal{I}}

\newcommand{\critical}{\mathfrak{c}}
\newcommand{\K}{\mathcal{K}}
\newcommand{\sGW}{\mathcal{GW}}
\newcommand{\F}{\mathcal{F}}

\DeclareMathOperator{\Spec}{Spec}
\DeclareMathOperator{\Proj}{Proj}

\DeclareMathOperator{\Gr}{Gr}
\DeclareMathOperator{\id}{id}

\DeclareMathOperator{\rank}{rank}

\DeclareMathOperator{\Sym}{Sym}

\DeclareMathOperator{\End}{End}
\DeclareMathOperator{\GW}{GW}

\DeclareMathOperator{\Tr}{Tr}

\DeclareMathOperator{\sgn}{sgn}
\DeclareMathOperator{\Bl}{Bl}

\let\det\undefined
\DeclareMathOperator{\det}{det}

\DeclareMathOperator{\Aut}{Aut}

\newcommand{\SH}{\text{\textnormal{SH}}}
\newcommand{\Sm}{\text{\textnormal{Sm}}}

\newcommand{\MW}{\text{\textnormal{MW}}}

\newcommand{\topo}{\text{\textnormal{top}}}

\makeatletter
\newcommand{\colim@}[2]{%
  \vtop{\m@th\ialign{##\cr
      \hfil$#1\operator@font colim$\hfil\cr
      \noalign{\nointerlineskip\kern1.5\ex@}#2\cr
      \noalign{\nointerlineskip\kern-\ex@}\cr}}%
}
\newcommand{\fcolim}{%
  \mathop{\mathpalette\colim@{\rightarrowfill@\textstyle}}\nmlimits@
}
\newcommand{\colim}{%
  \mathop{\mathpalette\colim@{}}\nmlimits@
}
\makeatother

\title{Quadratic Euler Characteristic of Geometrically Cyclic Branched Coverings}
\author{Louisa F. Bröring}
\date{}

\begin{document}
\maketitle

\begin{abstract}
  For an $n$-fold geometrically cyclic branched covering $Y$ of a smooth, projective
  scheme $X$ branched at a smooth closed subscheme $Z\subset X$ with $n \in k^\times$, we compute the quadratic Euler
  characteristic of $Y$ in terms of certain Euler classes on $X$ and $Z$ using
  the quadratic Riemann-Hurwitz formula of Levine. In certain cases with $n$
  odd, we relate the quadratic Euler characteristic of $Y$ to the quadratic
  Euler characteristics of $X$ and $Z$, obtaining similar formulae to the
  situation in topology.

  As an application, we compute the quadratic Euler characteristic of
  geometrically cyclic branched double coverings of $\PP^2$.
\end{abstract}

\tableofcontents

\section*{Introduction}
\addcontentsline{toc}{section}{Introduction}
\markboth{Introduction}{}

The quadratic Euler characteristic is a refinement of the Euler characteristic
in topology to algebraic geometry, first
considered by Hoyois \cite{HoyoisQRGLVTF}. Let $k$ be a perfect field of
characteristic not two. For $X$ a smooth, projective scheme over $k$, the
quadratic Euler characteristic $\chi(X/k)$ of $X$ is not an integer, but an
element of the Grothendieck-Witt ring $\GW(k)$ of (virtual, non-degenerate) quadratic
forms over $k$.

The quadratic Euler characteristic carries a lot of information: if $k$ is a
subfield of $\R$, the
rank of $\chi(X/k)$ equals the topological Euler characteristic of
$X(\C)$. Furthermore, the signature of $\chi(X/k)$ with respect to the
specified embedding into $\R$ equals the topological Euler characteristic of
$X(\R)$. A theorem of Saito allows to interpret the discriminant of
$\chi(X/k)$ in terms of the determinant of $\ell$-adic cohomology for any
$\ell$ that is coprime to the characteristic of the base-field. In general, it is hard to compute quadratic Euler characteristics.

Quadratic Euler characteristics are often used in the programme of refined
enumerative geometry, which aims at proving counts in enumerative geometry
over more general field, often valued in $\GW(k)$. One can often deduce known invariants or results by
taking ranks or traces from the refined results.

An \emph{$n$-fold geometrically cyclic covering branched at $Z$}
is a map between smooth,
projective schemes $f\colon Y \to
X$ over $k$ together with a smooth, closed subscheme $Z\subset X$ satisfying
the following condition: there exists a line bundle $L \to X$
and a section $s \colon X \to L^{\otimes n}$ with zero subscheme $i \colon Z\hookrightarrow
X$ and a morphism $Y \to L$ such that we have a fibre square
\[
  \begin{tikzcd}
    Y \ar[r]\ar[d, "f"] & L\ar[d, "m_n"]\\
    X \ar[r, "s"] & L^{\otimes n}.
  \end{tikzcd}
\]
Here $m_n\colon L \to L^{\otimes n}$ is the map sending a local generator $t$
of $L$ to $t^n$.

Geometrically cyclic branched coverings are an algebro-geometric analogue of
branched coverings in topology. For these coverings and $k = \C$,
one has the classical formula
\[
  \chi^\topo(Y) = n\chi^\topo(X) - (n-1)\chi^\topo(Z).
\]
This holds more generally for $k$ algebraically closed after replacing the
topological Euler characteristic by the étale Euler characteristic.
In this article, we investigate an analogue of this formula for the quadratic
Euler characteristic, that is, we compare the quadratic Euler
characteristic of $Y$ with the quadratic Euler characteristic of $X$ and
$Z$ when $X$ is equidimensional and $n$ is invertible in $k$.

Our main
result is as follows:

\begin{theorem*}[see Corollary \ref{cor:odd-euler-char-general}]
  Let $X$ be equidimensional of dimension $r$ and let $n \ge 2$ be invertible in $k$. Assume that
  there is a smooth, projective curve $C$ possessing a half-canonical line
  bundle together with a
  morphism $\rho\colon X \to C$ whose critical points $\critical(\rho)$ lie
  outside of $Z$. Furthermore, assume that $\rho$ and $\rho\circ i$ only have
  finietly many critical points.

  Then if $n$ is
  odd, we have
  \begin{align*}
    (-1)^r\chi(Y/k) &= \sum_{y \in f^{-1}(\critical(\rho))}\pi_{Y\ast}i_{y\ast}
                      e_y(d(\rho\circ f)) + (-1)^{r-1} \chi(Z/k)\cdot \frac{n-1}{2} H\\
    &\quad -  nD(\rho)\cdot H
  \end{align*}
  in $\GW(k)$.

  If $n$ is even, choose for each $x \in \critical(\rho\circ i)$ a local trivialisation of $L$ around
  $x$ and let $s_x \in \OO_{X,x}$ be the element corresponding with
  the section $s$ under the chosen trivialisation. Let $t$ be a
  normalised parameter of $C$ at $\rho(x)$ and write $d\rho(dt) =
  \alpha_x ds_x$ in $\Omega^1_{X,x}\otimes k(x)$ with $\alpha_x \in k(x)^\times$.
  Then we have
  \begin{align*}
    (-1)^r\chi(Y/k) &= \sum_{y \in f^{-1}(\critical(\rho))}\pi_{Y\ast}i_{y\ast}
                      e_y(d(\rho\circ f)) + \sum_{y \in
                      \critical(\rho\circ i)}\pi_{Y\ast}i_{y\ast}
                      (e_y(d(\rho\circ i)) \langle (n\alpha_x\rangle -
                      \langle 1\rangle))\\
    &\quad + (-1)^{r-1} \chi(Z/k)\cdot
                      (\langle 1\rangle + \frac{n-2}{2}\cdot H) -nD(\rho)\cdot H
  \end{align*}
  in $\GW(k)$.
\end{theorem*}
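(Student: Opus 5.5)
We apply Levine's quadratic Riemann--Hurwitz formula to the composite $\rho\circ f\colon Y\to C$. Since $C$ has a half-canonical bundle, that formula writes $(-1)^r\chi(Y/k)$ as a sum of local Euler classes of $d(\rho\circ f)$ at its critical points, pushed forward to $k$, plus a hyperbolic correction. The correction is of the form $-\deg\cdot D(\rho)\cdot H$, where $\deg$ is the degree of $\rho\circ f$ over $C$ relative to $\rho$. Because $f$ is finite flat of degree $n$, this correction becomes $-nD(\rho)\cdot H$; one checks that the defining quantity of $D(\cdot)$ is multiplicative in $f$.

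The next step is to locate the critical points of $\rho\circ f$. Away from the ramification locus $f^{-1}(Z)$ the map $f$ is étale, since $n\in k^\times$, so there the critical points are exactly $f^{-1}(\critical(\rho))$; this gives the first sum. Along the ramification locus $\tilde Z=f^{-1}(Z)\cong Z$, we work in local coordinates. After trivialising $L$, $Y$ is locally $\{t^n=s_x\}$ and $s_x$ is part of a system of parameters, since $Z$ is smooth. As $\critical(\rho)\cap Z=\emptyset$, $d\rho$ is nonzero at points of $Z$. One checks that $d(\rho\circ f)$ vanishes at $y\in\tilde Z$ exactly when $d\rho$ is proportional to $ds_x$, i.e.\ when $y\in\critical(\rho\circ i)$. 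So the extra critical points of $\rho\circ f$ are the points of $\critical(\rho\circ i)$, and by hypothesis they are isolated.

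The main obstacle is computing the local index $e_y(d(\rho\circ f))$ at such a point. In coordinates $(t,z_2,\dots,z_r)$ on $Y$, with $z_i$ coordinates on $Z$, we have $\rho\circ f=\alpha_x t^n+g(z)+(\text{mixed higher terms})$. Here $g$ is, to leading order, $\rho\circ i$. I would deform the section $d(\rho\circ f)$ to the split form $(n\alpha_x t^{n-1}, dg)$. The Euler class is then a product: the local degree of $t\mapsto n\alpha_x t^{n-1}$ times $e_x(d(\rho\circ i))$. By the standard $\mathbb A^1$-degree of $t\mapsto at^{m}$, that local degree is $\frac{m}{2}H$ for $m=n-1$ even, and $\langle a\rangle+\frac{m-1}{2}H$ for $m$ odd. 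The delicate points are justifying the deformation, which uses the finiteness of the zero locus and a homotopy argument, and tracking the twist by $\omega_{Y/C}$ together with the trivialisation choices, so that the scalar really is $n\alpha_x$.

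With these local indices in hand, we sum over $\critical(\rho\circ i)$. Applying Riemann--Hurwitz to $\rho\circ i\colon Z\to C$ gives
\[
(-1)^{r-1}\chi(Z/k)=\sum_{x\in\critical(\rho\circ i)}\pi_{\ast}e_x(d(\rho\circ i))-D(\rho\circ i)H.
\]
Since $H\cdot\GW(k)$ only sees ranks, multiplying this identity by $\frac{n-1}{2}H$ (for $n$ odd) turns the critical-point sum into $(-1)^{r-1}\chi(Z/k)\cdot\frac{n-1}{2}H$. Absorbing the resulting $D(\rho\circ i)$ contributions requires checking that they cancel with the discrepancy between $D(\rho\circ f)$ and $nD(\rho)$; this bookkeeping is where I expect to spend care.

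For $n$ even we split $\langle n\alpha_x\rangle+\frac{n-2}{2}H$ as
\[
\langle 1\rangle+\frac{n-2}{2}H+\bigl(\langle n\alpha_x\rangle-\langle 1\rangle\bigr).
\]
The first two terms assemble into $(-1)^{r-1}\chi(Z/k)\cdot(\langle 1\rangle+\frac{n-2}{2}H)$, and the last term gives the stated correction sum.
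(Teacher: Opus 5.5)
Your architecture matches the paper's: Riemann--Hurwitz for $\rho\circ f$, the splitting $\critical(\rho\circ f)=f^{-1}(\critical(\rho))\sqcup\critical(\rho\circ i)$, a product formula at the branched critical points, Riemann--Hurwitz for $\rho\circ i$, and $H\cdot\beta=\rank(\beta)\cdot H$. Your local answers $\frac{n-1}{2}H$ and $\langle n\alpha_x\rangle+\frac{n-2}{2}H$ are also correct. The gap is the hyperbolic correction term.

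In your first paragraph you claim $D(\rho\circ f)=nD(\rho)$ because $D$ is ``multiplicative in $f$''. This is false in general. Take $X=\PP^1\times\PP^1\to\PP^1=C$ the first projection and $Z$ of degree $d$ over $C$. Then $D(\rho)=-2$ and $D(\rho\circ i)=d$. A general fibre $Y_t$ of $\rho\circ f$ is an $n$-fold cover of $\PP^1$ branched at $d$ points, so $D(\rho\circ f)=-\chi^\topo(Y_t)=-2n+(n-1)d$. In your fourth paragraph you tacitly retract the claim by invoking a ``discrepancy between $D(\rho\circ f)$ and $nD(\rho)$'' that should cancel the $D(\rho\circ i)$ terms. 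But you never say what this discrepancy is or why it cancels. That cancellation is exactly what produces the term $-nD(\rho)\cdot H$, so the argument is inconsistent and, as written, incomplete. Your even case has the same omission: $\sum_x\Tr_{k(x)/k}(e_x(d(\rho\circ i)))\cdot(\langle1\rangle+\frac{n-2}{2}H)$ equals $(-1)^{r-1}\chi(Z/k)\cdot(\langle1\rangle+\frac{n-2}{2}H)+(n-1)D(\rho\circ i)\cdot H$, not just the first summand.

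The missing ingredient is the identity
\[
D(\rho\circ f)=nD(\rho)+(n-1)D(\rho\circ i).
\]
The paper proves it as follows:
\begin{itemize}
\item Use $D(g)=(-1)^{\dim W-1}\cdot\frac12\cdot\chi^\topo(W_t)\cdot\chi^\topo(C)$ for $g\colon W\to C$ with smooth fibre $W_t$.
\item Apply this to $g=\rho,\rho\circ i,\rho\circ f$ at a $t\in C$ over which all three fibres are smooth. Note that $\dim Z=r-1$ flips the sign for $\rho\circ i$.
\item Combine with the classical formula $\chi^\topo(Y_t)=n\chi^\topo(X_t)-(n-1)\chi^\topo(Z_t)$ for the branched cover $Y_t\to X_t$.
\end{itemize}
Riemann--Hurwitz for $\rho\circ i$, multiplied by $\frac{n-1}{2}H$ (resp.\ $\langle1\rangle+\frac{n-2}{2}H$, both of rank $n-1$), contributes $(n-1)D(\rho\circ i)\cdot H$. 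By the identity, this combines with $-D(\rho\circ f)\cdot H$ to give exactly $-nD(\rho)\cdot H$.

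Separately, your homotopy to a split section, which you leave unjustified, can be avoided. Write $d\rho(dt)=a_1\,ds_x+\sum_{i\ge2}a_i\,dt_i$ in parameters $s_x,t_2,\dots,t_r$ of $X$ at $x$, with $a_1$ a unit, and let $w^n=s_x$ on $Y$. The components $(na_1w^{n-1},a_2,\dots,a_r)$ of $d(\rho\circ f)(dt)$ generate an ideal containing $w^{n-1}$, hence also $s_x$. So $J_Y\cong k(x)[w]/(w^{n-1})\otimes_{k(x)}J_Z$ and the Scheja--Storch matrix is block lower-triangular. The trace form therefore factors exactly as a product, which is how the paper obtains the local formula.
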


Here, for $a \in k^\times$, let $\langle a \rangle \in \GW(k)$ denote
the quadratic form $x \mapsto ax^2$, let $H$ denote the hyperbolic form
$\langle 1\rangle + \langle -1\rangle \in \GW(k)$ and define
\begin{align*}
  D(\rho) &= \frac
  12[\deg_k(c_r(\Omega_{X/k}\otimes\rho^\ast\omega_{C/k}^{-1})) -
            \deg_k(c_r(\Omega_{X/k}))]\\
  &= (-1)^{r-1}\cdot \frac 12\cdot \chi^\topo(X_t)\cdot \chi^\topo(C),
\end{align*}
where $r = \dim X$, $c_r$ is the $r$-th Chern class in the Chow-ring of
$X$, and $X_t$ is any smooth fibre of $\rho$. Under our assumptions, $D(\rho)$
is an integer.

In two particular instances, where all critical points of
$f^{-1}(\critical(\rho))$ behave identically, we can make the comparison of
quadratic Euler characteristics more explicit.

\begin{corollary*}[see Corollary \ref{cor:odd-euler-char-irred}]
  Let $X$ be equidimensional and let $n \ge 3$ be invertible in $k$. Assume that
  there is a smooth, projective curve $C$ possessing a half-canonical line
  bundle together with a
  morphism $\rho\colon X \to C$ whose critical points $\critical(\rho)$ lie
  outside of $Z$. Furthermore, assume that $\rho$ and $\rho\circ i$ only have
  finietly many critical points, that $n$ is odd, and that $f^{-1}(x) \subset Y$ is a single
  closed point for all $x
  \in \critical(\rho)$. Then we have
  \[
    \chi(Y/k) = (\langle n\rangle + \frac{n-1}{2}H)\cdot \chi(X/k) -
    \chi(Z/k)\cdot \frac{n-1}{2}\cdot H \in \GW(k).
  \]
\end{corollary*}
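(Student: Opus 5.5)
We derive this from the main theorem (Corollary \ref{cor:odd-euler-char-general}) in the odd case, applied twice: once to $f\colon Y\to X$, and once to the trivial covering $X\to X$. For the second application we also need Levine's quadratic Riemann--Hurwitz formula for $\rho$ itself, which expresses $(-1)^r\chi(X/k)$ as a sum of local indices. The idea is to compare the local contributions at critical points of $\rho\circ f$ with those of $\rho$, and then eliminate the $D(\rho)$ term.

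First, write the formula for $X$. By Levine's Riemann--Hurwitz formula as used earlier in the paper (with $C$ carrying a half-canonical bundle),
\[
(-1)^r\chi(X/k)=\sum_{x\in\critical(\rho)}\pi_{X\ast}i_{x\ast}e_x(d\rho) - D(\rho)\cdot H.
\]
This is the $Z=\emptyset$, $n=1$ specialisation of the main formula. Next, for $x\in\critical(\rho)$ we have $x\notin Z$, so $f$ is étale near $f^{-1}(x)=\{y\}$. Since $f$ has degree $n$ and $y$ is a single point, $k(y)/k(x)$ is a field extension of degree $n$.

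The key step is to compute $\pi_{Y\ast}i_{y\ast}e_y(d(\rho\circ f))$ in terms of $e_x(d\rho)$. Étale locally, $f$ is an isomorphism of henselisations after base change to $k(y)$. Hence the local index of $d(\rho\circ f)$ at $y$ is the pullback of $e_x(d\rho)$ along $k(x)\to k(y)$, and pushing forward gives
\[
\pi_{Y\ast}i_{y\ast}e_y(d(\rho\circ f)) = \pi_{X\ast}i_{x\ast}\bigl(e_x(d\rho)\cdot \Tr_{k(y)/k(x)}\langle 1\rangle\bigr)
\]
by the projection formula. The main obstacle is then to identify the trace form $\Tr_{k(y)/k(x)}\langle1\rangle$. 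Locally $k(y)=k(x)[T]/(T^n-u)$ with $u=s_x(x)\in k(x)^\times$. For this Kummer extension with $n$ odd, the trace form is $\langle n\rangle+\frac{n-1}{2}H$: the basis $1,T,\dots,T^{n-1}$ is orthogonal in pairs $T^j, T^{n-j}$, giving $(n-1)/2$ hyperbolic planes, plus $\langle n\rangle$ from $T^0$. We must check that this element of $\GW(k(x))$ is defined over $\Z$-multiples of $\langle1\rangle,\langle n\rangle, H$, so that it factors out of $\pi_{X\ast}i_{x\ast}$ by the projection formula.

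Summing over $x$ and using the first display, the sum over critical points becomes $(\langle n\rangle+\frac{n-1}{2}H)\bigl((-1)^r\chi(X/k)+D(\rho)H\bigr)$. Since $\langle n\rangle H=H$ and $H\cdot H=2H$, the $D(\rho)$ terms combine to $(1+(n-1))D(\rho)H = nD(\rho)H$, which cancels the $-nD(\rho)H$ in the main formula. Multiplying by $(-1)^r$ and using $(-1)^r(-1)^{r-1}=-1$ gives the claim. The hypothesis $n\ge3$ is harmless, as the formula also holds trivially at $n=1$.
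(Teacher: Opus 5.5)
Your proposal is correct and follows the paper's own route: the paper proves this by combining the odd case of Corollary \ref{cor:odd-euler-char-general} with the étale contribution $\Tr_{k(y)/k(x)}(e_y(d(\rho\circ f))) = e_x(d\rho)\cdot(\langle n\rangle+\frac{n-1}{2}H)$ (Corollary \ref{cor:etale-irreducible-contribution}, via Lemmas \ref{lem:cyclic-trace} and \ref{lem:trace-quadratic-linear}) and Levine's formula for $\rho$ (Corollary \ref{cor:riemann-hurwitz-ss}), which is exactly your computation, including the cancellation of the $nD(\rho)H$ terms and your correct translation of ``single closed point'' into irreducibility of $z^n-s_x$. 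One point of framing: Corollary \ref{cor:odd-euler-char-general} is only established for $n\ge 2$, since Proposition \ref{prop:critical-decomp} fails for $n=1$, so the formula for $X$ should be cited directly from Theorem \ref{thm:riemann-hurwitz} rather than described as an $n=1$ specialisation of the main theorem; this direct citation is in fact what your argument uses.
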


\begin{corollary*}[see Corollary \ref{cor:odd-euler-char-splitting}]
  Let $X$ be equidimensional and let $n \ge 3$ be invertible in $k$. Assume that
  there is a smooth, projective curve $C$ possessing a half-canonical line
  bundle together with a
  morphism $\rho\colon X \to C$ whose critical points $\critical(\rho)$ lie
  outside of $Z$. Furthermore, assume that $\rho$ and $\rho\circ i$ only have
  finietly many critical points, that $n$ is odd, and that $f^{-1}(x) \subset Y$ consists of $n$
  closed points for all $x
  \in \critical(\rho)$. Then we have
  \[
    \chi(Y/k) = n\cdot  \chi(X/k) -
    \chi(Z/k)\cdot \frac{n-1}{2}\cdot H\in \GW(k).
  \]
\end{corollary*}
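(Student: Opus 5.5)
We derive this from the main theorem (Corollary \ref{cor:odd-euler-char-general}) with $n$ odd, by computing the sum of local terms over $f^{-1}(\critical(\rho))$ and comparing it with the analogous formula for $X$.

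\textbf{Step 1: formula for $X$.} Apply the same Riemann--Hurwitz approach (Levine's quadratic Riemann--Hurwitz formula, on which the main theorem rests) to $\rho\colon X\to C$ alone. This is the degenerate case with no branching and $Y=X$, or equivalently the computation the paper performs before specialising. It gives
\[
(-1)^r\chi(X/k)=\sum_{x\in\critical(\rho)}\pi_{X\ast}i_{x\ast}e_x(d\rho) - D(\rho)\cdot H .
\]

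\textbf{Step 2: local terms upstairs.} Since $\critical(\rho)$ lies outside $Z$, the map $f$ is étale over a neighbourhood of each $x\in\critical(\rho)$. Hence the local index of $d(\rho\circ f)$ at a point $y$ over $x$ is the pullback of the local index of $d\rho$ at $x$, and we must push it forward along $k(y)/k(x)$. Near $x$ the cover is $\Spec \OO_{X,x}[T]/(T^n-s_x)$ with $s_x$ a unit, so over $k(x)$ the fibre is $\Spec k(x)[T]/(T^n-\bar s_x)$.

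\emph{Splitting case.} The fibre is $n$ copies of $\Spec k(x)$, so each $y$ contributes exactly $e_x(d\rho)$, giving $n$ times the $X$-sum.

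\emph{Single-point case.} The fibre is $\Spec k(y)$ with $[k(y):k(x)]=n$ and $k(y)=k(x)(\bar s_x^{1/n})$. The contribution is $\Tr_{k(y)/k(x)}\bigl(e_x(d\rho)\otimes_{k(x)}k(y)\bigr)$, which by the projection formula equals $e_x(d\rho)\cdot \Tr_{k(y)/k(x)}\langle 1\rangle$. The key computation is the trace form of $k(x)[T]/(T^n-a)$ for $n$ odd. With basis $1,T,\dots,T^{n-1}$, the trace $\Tr(T^{i+j})$ is nonzero only when $i+j\equiv 0 \pmod n$. This gives $n\langle 1\rangle$ from $i=j=0$, together with hyperbolic pairs from $\{i,n-i\}$ for $i=1,\dots,(n-1)/2$. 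Hence $\Tr\langle1\rangle=\langle n\rangle+\frac{n-1}{2}H$. For this to be valid we use $n\in k^\times$ and $n$ odd. Since $n$ is odd we also have $\langle n\rangle H=H$ and $nH=(\langle n\rangle+\frac{n-1}{2}H)\cdot H$.

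\textbf{Step 3: assemble.} Write $\Lambda$ for $n$ in the splitting case and for $\langle n\rangle+\frac{n-1}{2}H$ in the single-point case. In both cases $\operatorname{rank}\Lambda=n$, and since $H$ annihilates all of $I(k)$, we have $\Lambda\cdot H=nH$. Substituting Step 2 into the main theorem gives
\[
(-1)^r\chi(Y/k)=\Lambda\Bigl(\sum_x \pi_{X\ast}i_{x\ast}e_x(d\rho)\Bigr)+(-1)^{r-1}\chi(Z/k)\tfrac{n-1}{2}H-\Lambda D(\rho)H .
\]
By Step 1 this equals
\[
\Lambda(-1)^r\chi(X/k)+(-1)^{r-1}\chi(Z/k)\tfrac{n-1}{2}H .
\]
Multiplying by $(-1)^r$ yields the claimed identity.

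\textbf{Main obstacle.} The delicate point is justifying that the local contribution at the single point $y$ is exactly $e_x(d\rho)\cdot\Tr_{k(y)/k(x)}\langle1\rangle$. This requires that Levine's local index behaves well under étale base change and pushforward along finite separable extensions, which holds here since $n\in k^\times$. It also requires checking that the residue field $k(y)$ is really $k(x)[T]/(T^n-\bar s_x)$, that is, that this polynomial is irreducible exactly when the fibre is a single point.
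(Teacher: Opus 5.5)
Your proposal is correct and follows the paper's route: the paper's proof combines three ingredients, exactly as in your Steps 1--3 with $\Lambda = n$. These are the general odd-$n$ formula (Corollary \ref{cor:odd-euler-char-general}), the splitting-case étale contribution $\sum_{y\mapsto x}\Tr_{k(y)/k(x)}e_y(d(\rho\circ f)) = n\cdot e_x(d\rho)$ (Corollary \ref{cor:etale-linear-contribution}), and Levine's Riemann--Hurwitz formula for $\rho$ itself (Corollary \ref{cor:riemann-hurwitz-ss}). Your single-point case and the ``main obstacle'' are not needed here, since they reproduce the paper's separate Corollary \ref{cor:odd-euler-char-irred}; in the splitting case the only local input is étale invariance of the Scheja--Storch form together with $k(y)=k(x)$.
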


As detailed in Remark \ref{rem:recover-classical-formula}, these results recover the classical formula for $k =
\C$ by taking the rank.

The existence of $\rho$ in the above theorems is a mild
assumption and can always be satisfied by blowing up a sufficiently nice smooth, codimension two
subscheme of $X$; see Appendix \ref{sec:lefschetz} for details.

The morphism $\rho$ induces a global section $d\rho$ of $\Omega^1_{X/k}\otimes
\rho^\ast\omega_{C/k}^{-1}$, whose Euler class can be related to the quadratic
Euler characteristic of $X$ using a quadratic Riemann Hurwitz formula of
Levine \cite[Theorem~11.2]{LevineA}. The zeroes of $d\rho$ correspond
with the critical points $\critical(\rho)$ of $\rho$, and the Euler class of
$\Omega^1_{X/k}\otimes\rho^\ast\omega_{C/k}^{-1}$ can be computed using local
Euler classes $e_x(d\rho)$ of $d\rho$ at every $x \in
\critical(\rho)$. Using the pushforward map $\pi_{X\ast}i_{x\ast}$, we can
map $e_x(d\rho)$ to the Grothendieck-Witt ring $\GW(k)$.

The analysis of the critical points of $\rho\circ f$ splits up into the
analysis of the points over the critical points of $\rho$, studied in Section \ref{sec:etale-classes}, and the critical points of
$\rho\circ i$, studied in Section \ref{sec:odd-contribution}.

As an application, we use the above results to compute the quadratic Euler
characteristic of geometrically cyclic branched double coverings of
$\PP^2$ that have a rational point outside of the branch locus in Section \ref{sec:application-p2}. That is
\[
  \begin{tikzcd}
    & X \ar[r]\ar[d, "\varphi"] & \OO(n)\ar[d, "m_2"]\\
    C \ar[r, "i"] & \PP^2 \ar[r, "s"] & \OO(2n)
  \end{tikzcd}
\]
with $X(k) \setminus C(k) \ne \emptyset$.
Here, we obtain the following result:

\begin{theorem*}[see Theorem \ref{thm:k3:euler-char}]
  Choose a closed point $y_0 \in X(k)\setminus C(k)$ and parametrise $\PP^2$ such that $f(y_0)
  = [0:0:1] \in \PP^2$. Let $F \in k[X_0,X_1,X_2]$ be the polynomial of degree
  $2n$ corresponding with $s$ under the chosen parametrisation. Denote the
  dehomogenisations of $F$ with respect to $X_0$ and $X_1$ by $f_0$ and $f_1$,
  respectively.

  For every $y$ in the vanishing locus $V(\partial F/\partial X_2)$ of $\frac{\partial F}{\partial X_2}$, pick $i \in \{0,1\}$ such that $y
  \in D(X_{i})$. Pick a local parameter $t_y \in \OO_{C,y}$ and write
  $\frac{\partial f_{i}}{\partial x_2} = u_yt_y^{m_y}$ for $m_y \ge 1$ and $u_y \in
  \OO_{C,y}^\times$.
  Assume that $m_y$ is
  invertible in $k$ for all $y \in V(\partial F/\partial X_2)$ and write
  $\frac{\partial^2 f_i}{\partial x_2^2} = m_y\alpha_yt_y^{m_y-1}$ for some
  $\alpha_y \in \OO_{C,y}^\times$.
  
  Define
  \[
    \beta = \sum_{\substack{y \in \critical(\rho\circ i)\\m_y\text{ odd}}}
    \Tr_{k(y)/k}(\langle -2\alpha_y\rangle) \in \GW(k).
  \]
  Then $\rank\beta$ is even and
  \[
    \chi(X/k) = 2\langle1\rangle + \beta + ((2n-1)(n-1) + 1 -\frac 12 \rank
    \beta)\cdot H \in \GW(k).
  \]
\end{theorem*}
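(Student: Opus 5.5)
The plan is to reduce to Corollary~\ref{cor:odd-euler-char-general} with $n=2$ (the even case), after blowing up so that a fibration over a curve exists. Let $p=[0:0:1]=f(y_0)$. Since $p\notin C$ and $y_0$ is a $k$-point, $s(p)$ is a nonzero square. Hence $f^{-1}(p)=\{y_0,y_0'\}$ consists of two $k$-rational points.

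\textbf{Step 1: the blow-up.} Let $\tilde P=\Bl_p\PP^2$ and $\tilde X=\Bl_{\{y_0,y_0'\}}X$. Then $\tilde X\to\tilde P$ is again a geometrically cyclic double covering, branched along the strict transform of $C$; this is isomorphic to $C$ because $p\notin C$. Let $\rho\colon\tilde P\to\PP^1$ be the linear projection from $p$.
\begin{itemize}
\item $\rho$ is a $\PP^1$-bundle, so $\critical(\rho)=\emptyset$ and the first sum in Corollary~\ref{cor:odd-euler-char-general} vanishes.
\item $\PP^1$ has the half-canonical bundle $\OO(-1)$.
\item The critical points of $\rho\circ i$ are the points of $C$ where the line through $p$ is tangent, namely $C\cap V(\partial F/\partial X_2)$. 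These are finitely many.
\item $D(\rho)=(-1)^{1}\cdot\tfrac12\cdot\chi^\topo(\PP^1)\cdot\chi^\topo(\PP^1)=-2$.
\end{itemize}
To relate back to $X$, I use that blowing up a $k$-rational point on a smooth surface changes $\chi$ by $\chi(\PP^1/k)-\langle1\rangle=\langle-1\rangle$. This gives $\chi(X/k)=\chi(\tilde X/k)-2\langle-1\rangle$.

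\textbf{Step 2: the branch-curve term.} $C$ is a smooth plane curve of degree $2n$, so $g=(2n-1)(n-1)$. Being odd-dimensional, $\chi(C/k)$ is a multiple of $H$, and comparing ranks gives $\chi(C/k)=(1-g)H$. With $r=2$, the term $(-1)^{r-1}\chi(Z/k)(\langle1\rangle+0\cdot H)$ becomes $-(1-g)H=((2n-1)(n-1)-1)H$. The term $-nD(\rho)H$, where $n=2$ is the covering degree, contributes $+4H$.

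\textbf{Step 3: local contributions at tangency points (the main obstacle).} For each $y\in\critical(\rho\circ i)$, work in the chart $D(X_i)$. There $s_x=f_i$, and $x_2$ is a coordinate along the fibres of $\rho$, so $d\rho(dt)=\alpha_x\,ds_x$ is read off from $\partial f_i/\partial x_2$ modulo the tangent direction of $C$.
\begin{itemize}
\item I would first show that, with normalised parameter $t$, the relevant unit is $\alpha_y$ (up to a square). This uses $\partial^2 f_i/\partial x_2^2=m_y\alpha_y t_y^{m_y-1}$ and implicit differentiation along $C$, and requires $m_y\in k^\times$. The factor $n\alpha_x$ in the theorem is then $2\alpha_y$, again up to squares.
\item Next I compute the local Euler class of $d(\rho\circ i)$ on the curve $C$. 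Here $d(\rho\circ i)$ vanishes to order $m_y$, and its leading coefficient is governed by $u_y$ and $\alpha_y$. By the standard computation of the $\mathbb{A}^1$-degree of $t\mapsto ut^m$, the local class is $\tfrac{m_y}{2}H$ if $m_y$ is even, and $\tfrac{m_y-1}{2}H+\langle c_y\rangle$ if $m_y$ is odd.
\item Multiplying by $\langle 2\alpha_y\rangle-\langle1\rangle$ and pushing forward, the even case gives $0$, since $H\cdot\langle a\rangle=H$. The odd case gives $\Tr_{k(y)/k}(\langle 2\alpha_y c_y\rangle-\langle c_y\rangle)$. I expect $c_y$ to be identified with $-1$ times a square: the sign comes from the orientation of $\Omega_C\otimes\rho^\ast\omega^{-1}$ and the duality conventions. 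The contribution would then be $\Tr(\langle-2\alpha_y\rangle)-\Tr\langle-1\rangle$.
\end{itemize}
Pinning down these signs and square classes precisely is the delicate point. I would verify them on the rank level, and on a model example such as a conic.

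\textbf{Step 4: assembly.} Summing over odd $m_y$ gives $\beta-\sum_{m_y\text{ odd}}[k(y):k]\langle-1\rangle$. The rank of this sum is $\rank\beta$, and it is even by a parity count: the total tangency degree $\sum m_y[k(y):k]=2n(2n-1)$ is even. So, after combining with the $2\langle-1\rangle$ from Step 1 via $\langle-1\rangle=H-\langle1\rangle$, the correction term equals $-\tfrac12\rank\beta\cdot H$ plus a multiple of $\langle 1\rangle$. Assembling Steps 1–3 and solving for $\chi(X/k)$ should then yield $2\langle1\rangle+\beta+((2n-1)(n-1)+1-\tfrac12\rank\beta)H$. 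I would confirm the final expression by checking the rank, which must equal $\chi^\topo=2\cdot3-2n(2n-1)+\ldots$ computed via Riemann–Hurwitz, and by checking the signature over $\R$ in one example.
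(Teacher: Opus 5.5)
Your overall setup is sound: you blow up $[0:0:1]$ and its two preimages, note $\critical(\rho)=\emptyset$, compute $D(\rho)=-2$, get the term $(g-1)H$, and prove $\rank\beta$ even by a Bézout parity count, which is a valid alternative to the paper's argument. The gap is that Step~3 misidentifies both local factors, and Step~4 then forces the answer instead of deriving it.

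In the chart $D(X_i)$ with $j=1-i$ one has $\frac{\partial f_i}{\partial x_2}(y)=0$. So in $\Omega^1\otimes k(y)$ we get $ds_x=\frac{\partial f_i}{\partial x_j}(y)\,dx_j$, while $d\rho(dt)=dx_j$ up to the normalising square. Hence the unit in Corollary~\ref{cor:odd-euler-char-general} is $\alpha_x=\bigl(\frac{\partial f_i}{\partial x_j}(y)\bigr)^{-1}$, not $\alpha_y$. Correspondingly, the odd-$m_y$ class on $C$ is $c_y=-\alpha_y\frac{\partial f_i}{\partial x_j}$ (Lemma~\ref{lem:k3:curve-local-euler-class}), not $-1$ times a square. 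The two errors cancel in $\langle 2\alpha_x c_y\rangle=\langle-2\alpha_y\rangle$, which is why $\beta$ comes out right (Proposition~\ref{prop:k3:local-euler-k3}). They do not cancel in the subtracted term $\Tr_{k(y)/k}\langle c_y\rangle$, no sign convention repairs this, and rank checks cannot detect it.

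A counterexample over $\R$: take $F=X_2^2-X_0^2+X_1^2$. Here the paper's $n$ is $1$, $X\cong\PP^1\times\PP^1$, and $\chi(X/\R)=2H$. The critical points are $y_\pm=[1:\pm1:0]$, with $\alpha_{y_\pm}=2$ and $c_{y_\pm}=\mp 4$. So the two curve classes are $\langle-1\rangle$ and $\langle 1\rangle$, as they must be, since their sum is a multiple of $H$. Your recipe gives the contribution $\langle-4\rangle-\langle-1\rangle=0$ at each point. It therefore yields $\chi(X/\R)=3H-2\langle-1\rangle=3\langle1\rangle+\langle-1\rangle$, whose signature is $2$ rather than $\chi^\topo(X(\R))=0$.

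Step~4 has two further slips. First, $\Tr_{k(y)/k}\langle-1\rangle\neq[k(y):k]\langle-1\rangle$ in general; for instance it is $H$ for $\C/\R$. Second, with $R=\rank\beta$, $-(R+2)\langle-1\rangle=-(R+2)H+(R+2)\langle1\rangle$. This is not ``$-\frac R2H$ plus a multiple of $\langle1\rangle$'' unless $\frac R2\langle1\rangle=\frac R2\langle-1\rangle$.

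The missing idea is to treat the $-\langle1\rangle$ part globally rather than pointwise. Apply Theorem~\ref{thm:riemann-hurwitz} to $\rho\circ i\colon C\to\PP^1$, which has degree $2n$, so $D(\rho\circ i)=2n$. This gives $\sum_y\Tr_{k(y)/k}e_y(d(\rho\circ i))=-\chi(C/k)+2nH=(g-1+2n)H$, which is purely hyperbolic. The $\langle 2\alpha_x\rangle$ part equals $\sum_y\Tr_{k(y)/k}e_y(d(\tilde\rho\circ\tilde\varphi))=\beta+\frac12\bigl(2n(2n-1)-R\bigr)H$. Substituting both into Corollary~\ref{cor:odd-euler-char-general} gives $\chi(\tilde X/k)=\beta+(g+3-\frac R2)H$. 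Subtracting $2\langle-1\rangle=2H-2\langle1\rangle$ then yields the theorem.

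The paper avoids this bookkeeping. It applies Corollary~\ref{cor:riemann-hurwitz-ss} directly to $\tilde\rho\circ\tilde\varphi$, whose critical set is $\critical(\rho\circ i)$ because $\tilde\rho$ is smooth. It reads off $\chi(\tilde X/k)=\beta+r'H$ from Proposition~\ref{prop:k3:local-euler-k3}, and fixes $r'$ by comparing ranks with the topological covering formula. So it never needs $D(\rho)$ or the individual classes on $C$.
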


With this, one can, for example, compute the quadratic Euler characteristic of the
K3 surface $X$ over $k$ induced by the section $s = X_0^6 + X_1^6 + X_2^6$ of $\OO(6)$. Here,
one obtains
\[
  \chi(X/k) = 2\langle 1\rangle + 11H \in \GW(k).
\]
See Example \ref{ex:k3:final-example} for details.

Except for the results in Section \ref{sec:application-p2}, the results
presented in this article also form a chapter in the author’s
PhD-thesis, which was submitted on 22 January 2025.

\subsection*{Notation and Conventions}
\addcontentsline{toc}{subsection}{Notation and Conventions}
Throughout, let $k$ be a perfect field that is not of characteristic two. The
assumption that $k$ is perfect is not strictly necessary and we do so to avoid
technical complications.

A curve, is a one-dimensional, geometrically connected, separated, finite-type
scheme over $k$. For a
point $x \in X$ on a scheme $X$, we write
$k(x)$ for the residue field at $x$.

We write $\A^1$ for the affine line $\A^1_k$ over $k$ and $\PP^n$ for the
$n$-dimensional projective space $\PP^n_k$ over $k$.
If $X$ is a smooth, projective scheme over $k$, we write $\Omega_{X/k}$ or $\Omega^1_{X/k}$ for
the sheaf of Kähler differentials of $X$ over $k$.

For a quasi-compact and quasi-separated scheme $X$, let $\SH(X)$ denote the stable
motivic homotopy category of $X$ as described by Hoyois \cite{HoyoisSOEMHT},
and let $1_X\in \SH(X)$ be the
unit object with respect to the smash product in $\SH(X)$.

\subsection*{Acknowledgements}
\addcontentsline{toc}{subsection}{Acknowledgements}
First and foremost, I would like to extend my deepest gratitude to my adviser,
Marc Levine, for suggesting this project to me and for all
of his invaluable support over the past years.
I am also extremely grateful to Anna Viergever for our many insightful
discussions.
I would like to extend my sincere thanks to Sabrina Pauli for many fascinating
discussions and in particular her insights into computing Euler classes.
I also want to thank Jesse Pajwani for our discussions and insights around the
results in Section \ref{sec:etale-classes} and, in particular, for explaining
Proposition \ref{prop:real-etale-part-no-product} to me.
Moreover, I very much appreciate, Dhyan Aranha, Tom Bachmann, Linda Carnevale, Jochen Heinloth, Clémentine
Lemarié{-}{-}Rieusset, Fabien Morel, Herman Rohrbach, Hind Souly, and Johannes Sprang for
interesting discussions.
I am also grateful to Stefan Schreieder and the algebraic geometry group at
the Leibniz Universität Hannover for offering me a place to work when I was
visiting my partner and for the welcoming atmosphere that I encountered there.

This work was funded by Deutsche Forschungsgemeinschaft (DFG, German Research
Foundation) - Research Training Group 2553 - project number
412744520 and Deutsche Forschungsgemeinschaft (DFG, German Research Foundation)
- Advances in quadratic enumerative geometry - project number
568776200.

\section{The Quadratic Euler Characteristic and Euler Classes}
\label{chap:euler-char}
The quadratic Euler
characteristic is an analogue in algebraic geometry of the Euler characteristic
in topology. An important tool for computing quadratic Euler characteristic uses
the theory of Euler classes, which in this context are an analogue of top Chern
classes in motivic homotopy theory. In the following, we will briefly introduce
both concepts based on \cite{LevineA}.

\subsection{The Quadratic Euler Characteristic}
\label{sec:qec}
The quadratic Euler characteristic is constructed unsing motivic homotopy
thoery and was first studied by Hoyois \cite{HoyoisQRGLVTF}.
Based on work of Morel-Voevodsky \cite{MorelVoevodsky99HTS}, Voevodsky \cite{Voevodsky98HT} constructs
a stable motivic
homotopy category $\SH(S)$ for a Noetherian base scheme $S$ as an analogue of
the stable homotopy category in topology. The category $\SH(S)$ is a symmetric monoidal
category and we shall denote its unit object by $1_S \in SH(S)$. See Hoyois
\cite{HoyoisSOEMHT} for an introduction to $\SH(S)$. There is an infinite suspension functor
$\Sigma^\infty\colon \Sm_S \to \SH(S)$ from the category of smooth schemes
over $S$ and a construction of a six functor formalism for $\SH$,
which can be found in \cite{HoyoisSOEMHT}.

The quadratic Euler characteristic is defined using the notion of a strong dual as defined
by Dold-Puppe \cite{dold80-categoricalEC}.

\begin{definition}[{\cite[Theorem 1.3]{dold80-categoricalEC}}]
  An object $X$ in a symmetric monoidal category $\mathcal C$ is strongly dualisable if there exists
  an object $X^\vee \in \mathcal C$ together with morphisms $\delta_X\colon
  1_{\mathcal C} \to  X\otimes X^\vee$
  and $\operatorname{ev}_X\colon X^\vee \otimes  X \to 1_{\mathcal C}$ such that
  the compositions
  \[
    \begin{tikzcd}
      X \ar[r, "\cong", phantom] & 1_{\mathcal C}\otimes X \ar[r, "\delta_X\otimes \id"] &
      X\otimes X^\vee \otimes X \ar[r, "\id\otimes
      \operatorname{ev}_X"] &  X\otimes
      1_{\mathcal C} \ar[r, "\cong", phantom] & X
    \end{tikzcd}
  \]
  and
  \[
    \begin{tikzcd}
      X^\vee \ar[r, "\cong", phantom] & X^\vee\otimes 1_{\mathcal C} \ar[r, "\id\otimes\delta_X"] &
      X^\vee \otimes X\otimes X^\vee \ar[r, "\operatorname{ev}_X\otimes\id"] &
      1_{\mathcal C}\otimes X^\vee \ar[r, "\cong", phantom] & X^\vee
    \end{tikzcd}
  \]
  are the identity.
\end{definition}

\begin{rem}
  For a strongly dualisable object $X$, the triple
  $(X^\vee, \delta_X, \operatorname{ev}_X)$ is unique
  up to unique isomorphism, see for example
  \cite[Theorem 1.3 (c)]{dold80-categoricalEC} for a proof. This triple
  $(X^\vee, \delta_X, \operatorname{ev}_X)$ is called the \emph{dual} of $X$.
\end{rem}

Given a smooth, projective scheme $X$ over $k$ with structure map $p_X\colon X\to \Spec(k)$,
the object $\Sigma^\infty X = p_{X\#}1_X\in \SH(k)$ is strongly dualisable by
for example \cite[Theorem~5.22]{HoyoisSOEMHT}.

For a strongly dualisable object $A$ in a symmetric monoidal category
$\mathcal{C}$, the \emph{categorical Euler characteristic}
$\chi^{\text{cat}}(A)\in \End_{\mathcal{C}}(1_{\mathcal{C}})$ is the
categorical trace of the identity as defined by Dold-Puppe
\cite[Definiton~4.1]{dold80-categoricalEC}. That is, $\chi^{\text{cat}}(A)$ is the
composition
\[
  \begin{tikzcd}
    1_{\mathcal{C}} \ar[r, "\delta_A"] & A\otimes A^\vee \ar[r, "\cong",
    phantom] & A^\vee \otimes A \ar[r, "\operatorname{ev}_A"] & 1_{\mathcal{C}},
  \end{tikzcd}
\]
where $(A^\vee, \delta_A, \operatorname{ev}_A)$ is the dual of $A$, and the
isomorphism $A\otimes A^\vee \cong A^\vee \otimes A$ is the switching
isomorphism from the definition of a symmetric monoidal
category.

In order to define the quadratic Euler characteristic, we first need to define
the Grothendieck-Witt ring over $k$, which is where the quadratic Euler
characteristic is valued.

\begin{definition}
  The \emph{Grothendieck-Witt ring $\GW(k)$} over $k$ is defined as
  the group completion of the monoid of isometry classes of
  non-degenerate quadratic forms over $k$ with respect to orthogonal
  direct sums.

  The tensor product of quadratic forms induces a ring structure on the group
  $\GW(k)$.
\end{definition}

\begin{rem}
  \label{rem:gw-rel}
  Over a field $k$ whose characteristic is not $2$, non-degenerate quadratic
  forms are
  the same as non-degenerate symmetric bilinear forms and we get the
  following presentation of $\GW(k)$: the
  quadratic forms $\langle a \rangle; x \mapsto ax^2$ for $x \in k^\times$
  generate $\GW(k)$ and these forms are subject to the following relations for
  $a, b\in k^\times$:
  \begin{enumerate}
  \item $\langle a\rangle \cdot \langle b\rangle = \langle a b \rangle$,
    \label{rem:gw-rel:mult}
  \item
    $\langle a\rangle + \langle b\rangle = \langle a+b\rangle + \langle
    ab(a+b)\rangle$, whenever $a+ b \in k^\times$,
    \label{rem:gw-rel:add}
  \item $\langle ab^2\rangle = \langle a\rangle$, and
    \label{rem:gw-rel:square}
  \item
    $\langle a\rangle + \langle -a\rangle = \langle 1\rangle + \langle
    -1\rangle =: H$.
    \label{rem:gw-rel:hyp}
  \end{enumerate}
  The quadratic form $H = \langle 1\rangle + \langle -1\rangle$ is called the
  \emph{hyperbolic form}.

  The first description of this presentation is due to Witt
  \cite[Section~1]{WittQF}, and in the form presented here, this is \cite[Lemma~3.9]{MorelATF}. Note that the relation (\ref{rem:gw-rel:mult}) is not included in
  \cite[Lemma~3.9]{MorelATF} because the description there provides a
  description of $\GW(k)$ as a group. We have included it here to describe the ring
  structure on $\GW(k)$.
\end{rem}

\begin{rem}
  For $L$ over $k$ a finite field extension, the field trace
  $\Tr_{L/k} \colon L \to k$ induces a group homomorphism $\Tr_{L/k}\colon
  \GW(L) \to \GW(k)$ by sending a quadratic form $q \colon V \to L$ to
  $\Tr_{L/k}\circ q\colon V\xrightarrow{q} L \xrightarrow{\Tr_{L/k}} k$. See
  Scharlau~\cite[Chapter~2, Discussion around Lemma~5.5]{ScharlauQHF} for details.
\end{rem}

For a perfect field $k$, Morel \cite[Thm.~6.4.1, Rmk.~6.4.2]{MorelIAHT} constructs an isomorphism $\GW(k)
\to \End_{\SH(k)}(1_k)$ from the Grothendieck-Witt ring of quadratic forms
over $k$ to the endomorphisms
of the unit of $\SH(k)$.

\begin{definition}
  The \emph{quadratic Euler characteristic} of a smooth, projective scheme $X$ is
  the element $\chi(X/k) \in \GW(k)$ corresponding with
  $\chi^{\text{cat}}(\Sigma^\infty X) \in \End_{\SH(k)}(1_k)$ under Morel's
  isomorphism.
\end{definition}

\begin{rem}
  \label{rem:euler-char-properties}
  The quadratic Euler characteristic satisfies a number of expected properties:
  \begin{itemize}
  \item Base change: if $L$ over $k$ is a field extension and $X$ is a smooth,
    projective scheme over $k$, then $\chi(X_L/L) = \chi(X/k)_L\in \GW(L)$; see
    \cite[Proposition~2.4~6.]{LevineA}.
  \item Fibre bundles: if $E \to X$ is a Zariski-locally trivial fibre bundle between smooth,
    projective schemes with smooth, projective fibre $F$, we have $\chi(E/k) =
    \chi(X/k)\cdot \chi(F/k)$; see \cite[Proposition~2.4~1.]{LevineA}
  \item Blow-ups: if $Z \subset X$ is a smooth, closed subscheme of a smooth, projective
    scheme $X$, then $\chi(\Bl_ZX/k) = \chi(X/k) - \chi(Z/k) + \chi(E/k)$ is the
    exceptional divisor. If $Z\subset X$ has constant codimension $c$, we can
    use the fibre bundle formula to get $\chi(\Bl_ZX/k) = \chi(X/k)
    (\chi(\PP^c/k) - \langle 1\rangle) \cdot \chi(Z/k)$. See
    \cite[Proposition~2.4~1. and~3.]{LevineA}.
  \end{itemize}
\end{rem}

See \cite[Section~2]{LevineA} for a detailed exposition of the quadratic
Euler characteristic and its basic
properties.

\begin{rem}
  \label{rem:euler-char-rank-signature}
  Let $k \subset \R$ and let $X$ be a smooth, projective scheme over $k$.
  Levine \cite[Remark 2.3]{LevineA} shows that the rank of $\chi(X/k)$ agrees with
  the topological Euler characteristic of $X(\C)$ and the signature
  agrees with the topological Euler characteristic of $X(\R)$.

  If $k$ is not of characteristic $2$ and $X$ is a smooth, projective scheme
  over $k$, a theorem of
  Saito \cite[Theorem 2]{Saito} can be used to interpret the discriminant of
  $\chi(X/k)$ in terms of the determinant of $\ell$-adic cohomology for any
  $\ell$ that is coprime to the characteristic of $k$. A more detailed
  description can be found in \cite[Theorem
  2.22]{Pajwani-PalYZ}.
\end{rem}

\begin{example}
  We have
  \[
    \chi(\PP^n/k) = \sum_{i=0}^n \langle (-1)^i\rangle =
    \begin{cases}
      \langle 1\rangle + \frac n2\cdot H & \text{if $n$ is even},\\
      \frac{n+1}{2} H & \text{if $n$ is odd}.
    \end{cases}
  \]
  See for example \cite[Example 2.6 1.]{LevineA} for a proof.
\end{example}

\begin{example}
  \label{ex:euler-char-curve}
  Let $C$ be a smooth, projective curve of genus $g$ over $k$. Then we have $\chi(C/k) = (1-g)H$.
\end{example}

\begin{rem}
  Further computations of quadratic Euler characteristics include
  \begin{enumerate}
  \item a purely algebraic computation of the quadratic Euler characteristic
    of a smooth,
    same-degree complete intersections in $\PP^n$ due to Viergever
    \cite[Theorem~F and Theorem~G]{Viergever} extending work of Levine-Pepin Lehalleur-Srinivas
    \cite[Theorem~3.12]{LevineLehalleurSrinivas} and
  \item the quadratic Euler
    characteristic of Grassmannians due to Hoyois \cite[Prop.~2.5]{LevineA}, see
    also Brazelton-McKean-Pauli
    \cite[Theorem~8.4]{BrazeltonMcKeanPauli23Bezoutians}.\qedhere
  \end{enumerate}
\end{rem}

\subsection{Euler Classes}
\label{sec:euler-class}
Euler classes, as introduced here, are a motivic analogue of top Chern
classes. They can be used to compute the quadratic Euler characteristic using
the Motivic Gauß-Bonnet Theorem. We provide a more detailed introduction to
Euler classes here because we need the constructions in Section \ref{sec:coverings}.

\begin{definition}[{\cite[Definition 6.3.1]{MorelIAHT}}]
  We denote by $K_\ast^\MW(k)$ the graded associative
  algebra with a generator $[u]$ of degree $+1$ for each $u \in k^\times$ and
  with one generator $\eta$ of degree $-1$ subject to the following relations:
  \begin{itemize}
  \item For $a, b \in k^\times$, we have $[ab] = [a] + [b] + \eta [a][b]$.
  \item (Steinberg relation) For each $a \in k^\times \setminus \{1\}$, we
    have $[a][1-a] = 0$.
  \item For each $u \in k^\times$, we have $\eta[u] = [u]\eta$.
  \item We have $\eta^2[-1] + 2\eta = 0$.
  \end{itemize}
  We call $K^\MW_\ast(k)$ \emph{Milnor-Witt K-theory}.
\end{definition}

\begin{rem}
  There is a canonical isomorphism $\GW(k) \to K^\MW_0(k)$ given by sending
  $\langle a\rangle$ to $1+\eta[a]$, see for example \cite[Lemma
  6.3.8]{MorelIAHT}.

  Note that $K_\ast^\MW(k)/\eta$ is canonically isomorphic
  to Milnor K-theory; see \cite[Remark 6.3.2]{MorelIAHT}. Furthermore,
  $K^\MW_n(k)$ is isomorphic to the Witt ring $\GW(k)/H$ for $n <
  0$ by for example \cite[Remark 6.4.3]{MorelIAHT}.
\end{rem}

\begin{rem}
  One can extend the definition of the Grothendieck-Witt ring and Milnor-Witt K-theory to Nisnevich sheaves
  $\sGW$ and $\K^\MW_\ast$ on the category of smooth schemes over a
  base scheme $X$. The graded sheaf
  $\K^\MW_\ast$ forms a so-called homotopy module and therefore, its sheaf
  cohomology is represented by a spectrum. Using this, one can construct a sheaf
  cohomology with support. See \cite[2192]{LevineA} for details.

  The canonical isomorphism $\GW(k) \to K^\MW_0(k)$ extends to a
  morphism of sheaves $\sGW \to \K^\MW_0$, which is an isomorphism whenever
  $k$ is infinite. This follows from a theorem of Gille-Scully-Zhong
  \cite[Theorem 6.3]{Gille16MW-groups-local}.
\end{rem}

\begin{definition}[see
  {\cite[Section~1.2]{calmès2017finitechowwittcorrespondences}}]
  Let $X$ be a smooth scheme
  and let $\mathcal L$ be a line bundle over $X$. Let $\Z[\mathcal L^\times]$
  be the Nisnevich sheafification of $U \mapsto \Z[L(U)\setminus \{0\}]$. This
  is a module over $\Z[\OO_X^\times]$.

  The assignments $u \mapsto \langle u\rangle$ and $u \mapsto 1 + \eta[u]$
  define $\Z[\OO_X^\times]$-module structures on $\sGW$ and $\K^\MW_0$
  respectively. Furthermore $\K^\MW_n$ is a module over $\K^\MW_0$. We
  define the \emph{Grothendieck-Witt sheaf twisted by $\mathcal L$} as
  \[
    \sGW(\mathcal L) := \sGW\otimes_{\Z[\OO_X^\times]} \Z[\mathcal L^\times],
  \]
  and similarly the \emph{$n$-th Milnor-Witt K-theory sheaf twisted by
    $\mathcal L$} as
  \[
    \K^\MW_n(\mathcal L) := \K_n^\MW \otimes_{\Z[\OO_X^\times]} \Z[\mathcal
    L^\times] = \K_n^\MW\otimes_{\K_0^\MW}\K_0^\MW(\mathcal L).
  \]
  In particular, if $k$ is infinite, we have $\K_0^\MW(\mathcal L) \cong
  \sGW(\mathcal L)$ and $\K^\MW_n(\mathcal L) =
  \K_n^\MW\otimes_{\sGW}\sGW(\mathcal L)$.
\end{definition}

\begin{rem}[{\cite[2194]{LevineA}}]
  \label{rem:twisted-gw-square-iso}
  Let $\mathcal L$ and $\mathcal L'$ be line bundles over $X$. The assignment
  of sending an $\mathcal L$-valued non-degenerate quadratic form $q \colon V \to \mathcal L$
  to the quadratic form $V\otimes \mathcal L' \to \mathcal L\otimes {\mathcal
    L'}^{\otimes 2}$ that in local coordinates is given by $v\otimes \lambda
  \mapsto q(v)\otimes \lambda^2$, induces an isomorphism
  \[
    \psi_{\mathcal L'} \colon \sGW(\mathcal L) \to \sGW(\mathcal L\otimes
    {\mathcal L'}^{\otimes 2}).
  \]
  Similarly, we get also an isomorphism $\psi_{\mathcal L'} \colon
  \K_n^\MW(\mathcal L) \to \K_n^\MW(\mathcal L\otimes
  {\mathcal L'}^{\otimes 2})$.

  Also, if $\rho\colon \mathcal L \to \mathcal L'$ is an isomorphism, we
  get induced isomorphisms $\rho_\ast\colon \sGW(\mathcal L) \to \sGW(\mathcal
  L')$ and $\rho_\ast\colon \K_n^\MW(\mathcal L) \to \K^\MW_n(\mathcal
  L')$.
\end{rem}

By \cite[Proposition 3.3 1.]{LevineA}, Milnor-Witt K-theory is a so-called SL-oriented cohomology
theory. See Ananyevskiy \cite{Ananyevskiy2020SLorient} for a general introduction to
SL-oriented cohomology theories. If one combines this with the six functor formalism, one
can define pushforward maps
\[
  H^{a}(X,\K^\MW_b(\omega_{X/k}\otimes f^\ast(L))) \to
  H^{a-d}(Y,\K^\MW_{b-d}(\omega_{Y/k}\otimes L))
\]
for each proper map $f\colon X \to Y$ of relative
dimension $d$ between smooth $k$-schemes. See \cite[2191]{LevineA} for details.

\begin{definition}
  Let $V\to X$ be a vector bundle of rank $r$ over a smooth scheme $X$. The \emph{Euler
    class} of $V$ is defined as
  \[
    e(V) := s_0^\ast (s_0)_\ast(1_X) \in H^r(X,\K^\MW_r(\det^{-1}V))
  \]
  where $s_0\colon X \to V$ denotes the zero section.
\end{definition}

\begin{rem}
  We can vary the section in the definition of $e(V)$: we also have
  $e(V) = s^\ast(s_0)_\ast(1_X) \in H^r(X,\K_r^\MW(\det^{-1}V))$ for every
  section $s \colon X\to V$. Moreover, whenever
  $Z$ contains $s^{-1}(0)$, this Euler class descends to an \emph{Euler class with
  support} in $Z$
  \[
    e_Z(V,s) \in H^r_Z(X,\K_r^\MW(\det^{-1}V))
  \]
  yielding $e(V)$ under the map forgetting the support. In general, the Euler class
  with support depends on the chosen section. See
  \cite[2191]{LevineA} for details.
\end{rem}

\begin{rem}[{\cite[Theorem 8.1]{LevineA}}]
  \label{rem:euler-class-dual}
  Euler classes satisfy similar properties as top Chern classes. For example,
  let $V$ be a rank $r$ vector bundle over $X$ with dual bundle $V^\vee$. The identification
  $\det^{-1} V^\vee=\det V=\det^{-1}V\otimes (\det V)^{\otimes 2}$ induces an isomorphism
  \[
    \psi\colon H^r(X,\K_r^\MW(\det^{-1}V^\vee)) \to H^r(X,\K_r^\MW(\det^{-1}V)).
  \]
  by Remark \ref{rem:twisted-gw-square-iso}.
  Using this isomorphism, we have
  \[
    \psi(e(V^\vee)) = (-1)^re(V) \in H^r(X,\K_r^\MW(\det^{-1}V)).\qedhere
  \]
\end{rem}

Quadratic Euler characteristics are in general hard to compute.
The motivic Gauß-Bonnet Theorem of Levine-Raksit \cite[Theorem 5.3 and Theorem
8.4]{LevineGB} provides a
way to compute the quadratic Euler characteristic using pushforwards of Euler classes. A more general version has been proved by
Déglise-Jin-Khan \cite[Theorem 4.6.1]{DegliseFCMHT}.

\begin{theorem}[Motivic Gauß-Bonnet, {\cite[Theorem 8.4]{LevineGB} and \cite[Theorem
    4.6.1]{DegliseFCMHT}; see \cite[Theorem 4.1]{LevineA} for this version}]
  Let $\pi\colon X\to \Spec k$ be a smooth, projective $d$-dimensional
  $k$-scheme with tangent bundle $T_X\to X$. Then we have
  \[
    \chi(X/k) = \pi_\ast(e(T_X)) \in GW(k).
  \]
\end{theorem}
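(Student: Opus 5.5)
The plan is to compute the categorical trace $\chi^{\text{cat}}(\Sigma^\infty X)$ directly from an explicit description of the dual of $\Sigma^\infty X$. The guiding principle is that the trace of the identity factors through the diagonal. Via the six functor formalism it becomes an ``excess intersection'' of the diagonal $\Delta\colon X\to X\times X$ with itself, and the normal bundle of that self-intersection is $T_X$.

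\textbf{Step 1 (Atiyah duality).} Write $\pi = p_X$. By Atiyah duality in $\SH(k)$ (see \cite[Theorem~5.22]{HoyoisSOEMHT}), the dual of $\Sigma^\infty X = \pi_\# 1_X$ is $\pi_\ast 1_X \cong \pi_\#\Sigma^{-T_X}1_X$, using purity $\pi_! \cong \pi_\#\Sigma^{-T_X}$ and $\pi_!=\pi_\ast$ for $\pi$ proper. I would write down the coevaluation and evaluation explicitly in terms of $\Delta$:
\begin{itemize}
\item $\operatorname{ev}$ is the composition of the purity transfer for the closed immersion $\Delta$, namely $\Sigma^\infty(X\times X)\to \mathrm{Th}(N_\Delta)$, with the collapse map for $\pi$;
\item $\delta$ is the dual construction, built from the Pontryagin--Thom collapse of $X$ and the diagonal.
\end{itemize}
Checking the triangle identities is the content of the cited duality theorem, so I take it as given.

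\textbf{Step 2 (identify the trace as a Gysin composite).} With these formulas, the composite $1_k\to \Sigma^\infty X\otimes X^\vee\cong X^\vee\otimes\Sigma^\infty X\to 1_k$ becomes
\[
  \pi_\ast\,\Delta^\ast\Delta_\ast(1_X)\in \End_{\SH(k)}(1_k),
\]
interpreted in the cohomology theory represented by $1_k$ with the appropriate twists. Two inputs give this:
\begin{itemize}
\item the switch isomorphism corresponds to the swap involution on $X\times X$, which fixes the diagonal;
\item the normal bundle of $\Delta$ is canonically $T_X$.
\end{itemize}
The self-intersection formula $\Delta^\ast\Delta_\ast(1)=e(N_\Delta)$ then gives $\chi^{\text{cat}} = \pi_\ast e(T_X)$, with $e(T_X)$ taken in the $1_k$-cohomology twisted by $-T_X$. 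Here the pushforward uses the canonical isomorphism of the twist $\det^{-1}T_X$ with $\omega_{X/k}$.

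\textbf{Step 3 (pass to Milnor--Witt cohomology).} Apply the map of ring spectra from $1_k$ to the spectrum representing $\K^\MW_\ast$-cohomology (its zeroth homotopy module). This map is SL-oriented and compatible with pushforwards and Euler classes by \cite[Proposition~3.3]{LevineA}. On $\End(1_k)$ it induces Morel's isomorphism $\GW(k)\cong K_0^\MW(k)$, because $\pi_{0,0}1_k = K^\MW_0(k)$. It therefore sends $\chi^{\text{cat}}$ to $\pi_\ast e(T_X)\in H^0(\Spec k,\K_0^\MW)=\GW(k)$.

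\textbf{Main obstacle.} I expect the hardest step to be the sign and twist bookkeeping in Step 2. Three things must all be matched with the conventions for $e(V)=s_0^\ast s_{0\ast}(1)$ and for the SL-oriented pushforward:
\begin{enumerate}
\item the switch map on $\mathrm{Th}(T_X)\wedge \mathrm{Th}(-T_X)$ contributes a $\langle -1\rangle$-type sign that must be tracked exactly;
\item the identification $N_\Delta\cong T_X$ must be chosen compatibly with this switch;
\item the orientation $\det N_\Delta\cong \omega^{-1}_{X/k}$ must be made compatible with the pushforward.
\end{enumerate}
A wrong choice changes the answer by $e(T_X)$ versus $e(T_X^\vee)$, which by Remark~\ref{rem:euler-class-dual} is a factor $(-1)^d$. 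To pin this down I would first test the formula on $\PP^1$, where $\pi_\ast e(T_{\PP^1})$ must equal $H$, and then argue by naturality in $X$.
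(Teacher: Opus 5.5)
The paper gives no proof of this statement. It quotes it from Levine--Raksit, D\'eglise--Jin--Khan and Levine, so the comparison is with those sources. Your outline is essentially the D\'eglise--Jin--Khan argument:
\begin{enumerate}
\item express the Atiyah duality data of $\pi_\#1_X$ through the diagonal;
\item identify the categorical trace with $\pi_\ast\Delta^\ast\Delta_\ast(1_X)$ in Thom-twisted $1_k$-cohomology;
\item apply the self-intersection (excess intersection) formula $\Delta^\ast\Delta_\ast(1_X)=e(N_\Delta)$;
\item push along the unit map to the $\K^\MW_\ast$-spectrum.
\end{enumerate}
Your Step~3 is fine for the pushforward defined via the six functors and Thom classes, which is how the paper defines $\pi_\ast$. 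The citation of Levine--Raksit buys something your route does not: their uniqueness theorem for pushforwards in SL-oriented theories. That theorem identifies this abstract $\pi_\ast$ on $H^d(X,\K^\MW_d(\omega_{X/k}))$ with concrete pushforwards. This is what underlies the paper's later description of $\pi_\ast$ on local Euler classes as $\Tr_{k(x)/k}$ of Scheja--Storch forms. Your argument yields the formula, but not that identification.

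The weak point is Step~2, which is where all the content sits.
\begin{itemize}
\item \textbf{Duality data.} The duality theorem you cite gives the existence of a dual. It does not show that your particular $\delta$ and $\operatorname{ev}$ satisfy the triangle identities. If they differed from an actual duality pair by an automorphism of $X^\vee$, your trace would be off by a unit. Proving that these explicit maps form a duality pair is the missing step; D\'eglise--Jin--Khan do this with fundamental classes.
\item \textbf{Sign diagnosis.} The two identifications $N_\Delta\cong T_X$ differ by $-\mathrm{id}$, which is also how the swap acts on $N_\Delta$. Since $-\mathrm{id}$ fixes the zero section, naturality of $e(V)=s_0^\ast s_{0\ast}(1_X)$ under bundle isomorphisms shows that neither choice changes $e(T_X)$; equivalently $\langle(-1)^d\rangle e(T_X)=e(T_X)$. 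The class $e(T_X^\vee)$ could only appear by misidentifying $N_\Delta$ with $\Omega_{X/k}$, not through a choice.
\item \textbf{Proposed remedy.} Testing on $\PP^1$ would not fix a genuine discrepancy. Suppose you only knew $\chi(X/k)=u_d\,\pi_\ast e(T_X)$ for a universal unit $u_d$. Then $\chi(\PP^1/k)=H$ determines only $\rank u_1$, since $uH=(\rank u)H$. The products $(\PP^1)^d$ have hyperbolic Euler characteristic $2^{d-1}H$, so they also only see $\rank u_d$. ``Naturality in $X$'' gives no mechanism to go further. A factor such as $\langle -1\rangle$ in even dimension is detected only by a non-hyperbolic example, e.g.\ $\chi(\PP^2/k)=\langle 1\rangle+H$.
\end{itemize}
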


\begin{corollary}[{\cite[Corollary 8.7]{LevineGB}}]
  \label{cor:motivic-gb}
  Let $X$ be an odd-dimensional, smooth, projective scheme over $k$.
  Then $\chi(X/k) = m\cdot H$ for some integer $m \in \Z$.
\end{corollary}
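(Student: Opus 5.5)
The plan is to combine the motivic Gauß-Bonnet theorem with the duality property of Euler classes from Remark \ref{rem:euler-class-dual}. Let $d = \dim X$ be odd and $T_X$ the tangent bundle, so $\Omega_{X/k} = T_X^\vee$. By Gauß-Bonnet, $\chi(X/k) = \pi_\ast(e(T_X))$. Remark \ref{rem:euler-class-dual} gives $\psi(e(T_X^\vee)) = (-1)^d e(T_X) = -e(T_X)$. Here $\psi$ is induced by the canonical identification of twists $\det^{-1}T_X^\vee \cong \det^{-1}T_X\otimes(\det T_X)^{\otimes 2}$. We therefore need a second, independent relation between $e(T_X)$ and $e(T_X^\vee)$ that carries the opposite sign.

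The second relation comes from any isomorphism $V \cong V^\vee$. For the Euler class with no support, I would argue more directly: the class $e(V)$ depends only on $V$ up to isomorphism, and rescaling by $-1$ is an automorphism of $V$. Applying the automorphism $-\id_V$ acts on $\det^{-1}V$ by $(-1)^d$. It therefore acts on $H^d(X,\K^\MW_d(\det^{-1}V))$ through $\langle(-1)^d\rangle = \langle -1\rangle$. On the other hand it must fix $e(V)$, by naturality of $e$ under bundle isomorphisms: $e(V) = s_0^\ast s_{0\ast}1$, and $-\id$ commutes with $s_0$. Hence
\[
  e(T_X) = \langle -1\rangle e(T_X).
\]
Pushing forward with $\pi_\ast$, which is $\GW(k)$-linear, gives $\chi(X/k) = \langle -1\rangle\chi(X/k)$.

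This identity alone does not yet force $\chi(X/k)$ to be a multiple of $H$. I expect this to be the main obstacle. The relation $\langle -1\rangle q = q$ holds for $q = H$, but also for every $q$ when $-1$ is a square. So the sign argument must instead be run with the duality $\psi$, giving $\chi = -\chi'$, where $\chi' = \pi_\ast\psi(e(T^\vee))$. One then needs $\pi_\ast\psi e(T_X^\vee)$ to equal $\pi_\ast e(T_X)$ up to hyperbolic terms.

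To get this, I would follow Levine-Raksit and compare with de Rham/Hodge cohomology. The class $\chi(X/k)$ is represented by the symmetric bilinear form on $\bigoplus_{p,q} H^q(X,\Omega^p)$ given by cup product and trace, with sign $(-1)^{p+q}$ folded in. For $d$ odd, Serre duality pairs $H^q(\Omega^p)$ with $H^{d-q}(\Omega^{d-p})$. Since $(p,q)\neq(d-p,d-q)$ whenever $d$ is odd, there is no self-paired middle summand. The form is therefore an orthogonal sum of the forms on $W\oplus W^\vee$, each pairing a space with its dual. Such a form is metabolic, hence equal to $(\dim W)\cdot H$ in $\GW(k)$.

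In summary, the steps are: (1) invoke the Hodge-cohomology realisation of $\chi(X/k)$, as in \cite[Theorem 8.4]{LevineGB}; (2) decompose by bidegree and pair via Serre duality; (3) observe that there is no middle term when $d$ is odd; (4) conclude metabolicity, so $\chi(X/k) = m H$ with $2m$ the total Hodge dimension.

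The delicate point in step (4) is that the $(-1)^{p+q}$ sign twist does not spoil metabolicity. This holds because each summand is a sum $W\oplus W^\vee$ with both $W$ and $W^\vee$ isotropic, and that property is preserved under any such sign twist.
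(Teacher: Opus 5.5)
The paper gives no proof of this statement; it quotes it from \cite[Corollary~8.7]{LevineGB}. Your closing argument, steps (1)--(4), is the right idea. It derives the claim from the Levine--Raksit description of $\chi(X/k)$ through Hodge cohomology with the pairing given by cup product followed by trace. For odd $d$, Serre duality pairs $H^q(X,\Omega^p)$ only with the \emph{different} bidegree $H^{d-q}(X,\Omega^{d-p})$, so every summand is isotropic and no self-dual middle piece exists. However, you misstate that input, and as a result your value of $m$ is wrong.

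\textbf{The sign sits on the class, not in the form.} $\chi(X/k)$ is not the class of one honest symmetric form on all of $\bigoplus_{p,q}H^q(X,\Omega^p)$ with signs $(-1)^{p+q}$ multiplied into the form. Under that reading the sign would be irrelevant, since changing the sign of a metabolic form does not change its class, and the rank would come out as the total Hodge number. The class is that of a graded (derived) symmetric object in which $H^q(X,\Omega^p)$ sits in a degree of parity $p+q$. That parity enters as a sign on the class in $\GW(k)$. The paired bidegrees $(p,q)$ and $(d-p,d-q)$ have the same parity, so each pair contributes $(-1)^{p+q}h^{p,q}\cdot H$, where $h^{p,q}=\dim_k H^q(X,\Omega^p)$. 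Hence $m=\frac12\sum_{p,q}(-1)^{p+q}h^{p,q}$, which can be negative. Your formula, with $2m$ equal to the total Hodge dimension, gives $(1+g)H$ for a curve of genus $g$ instead of $(1-g)H$ (Example~\ref{ex:euler-char-curve}). The existence of $m$ survives the correction, because your isotropy argument goes through verbatim with the correct signs.

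\textbf{The first half of the proposal does no work and should be dropped.} The relation $e(V)=\langle -1\rangle e(V)$ for odd rank is correct but, as you observe, vacuous when $\sqrt{-1}\in k$. The repair you propose is also insufficient. Combining $\psi(e(T_X^\vee))=-e(T_X)$ with a congruence $\pi_\ast\psi(e(T_X^\vee))\equiv\pi_\ast e(T_X)$ modulo $\Z H$ gives only $2\chi(X/k)\in\Z H$. That does not imply $\chi(X/k)\in\Z H$, because $\GW(k)$ can contain rank-zero $2$-torsion, for instance $\langle 1\rangle-\langle -1\rangle$ when $-1$ is a sum of two squares but not a square. The Hodge computation does not supply that congruence anyway; it proves the statement outright.

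If you want an argument that stays with Euler classes, pass to $W(k)=\GW(k)/\Z H$. The image of $\chi(X/k)$ there is the pushforward of the Euler class of the odd-rank bundle $T_X$ in Witt-sheaf cohomology. That class vanishes by Ananyevskiy's theorem on Euler classes of odd-rank bundles in $\eta$-inverted theories. Your $\langle -1\rangle$-relation alone only shows it is $2$-torsion.
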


In order to use this for concrete computations, one first
needs to compute the Euler class and the pushforward. In the following, we
describe how to compute the Euler class in nice cases, based on \cite[Section~5]{LevineA}. Afterwards, we apply this to get a description of the quadratic
Euler characteristic.

Using the purity isomorphism, one obtains an isomorphism
\[
  H^d_x(X,\K_d^\MW(L)) \cong \GW(k(x), \det^{-1}\m/\m^2 \otimes L)
\]
for $X$ a smooth scheme of dimension $d$ over $k$ and $x \in X$ a
closed point.

If $V\to X$ is a rank $d$ vector bundle over $X$ and $s\colon X \to V$ is a
section with only isolated zeroes $x_1, \dots, x_n$, then we can write by first using
excision and then purity
\[
  H^d_{s^{-1}(0)}(X,\K_d^\MW(L)) \cong \bigoplus_{i=1}^n
  H^d_{x_i}(X,\K_d^\MW(L)) \cong  \bigoplus_{i=1}^n\GW(k(x_i),
  \det^{-1}\m/\m^2 \otimes L).
\]
Locally around every zero $x$ of $s$, the section $s$ gives rise to \emph{local
  Euler classes}
\[
  e_x(V,s) \in H^d_x(X,\K_d^\MW(\det^{-1}V)) \cong \GW(k(x), \det^{-1}\m/\m^2
  \otimes \det^{-1}V).
\]
The Euler class of $V$ is now given by the image of $\sum_{i=1}^n
e_{x_i}(V,s) \in H^d_{s^{-1}(0)}(X,\K_d^\MW(\det^{-1} V))$ under the map
\[
  H^d_{s^{-1}(0)}(X,\K_d^\MW(\det^{-1} V)) \to H^d(X,\K_d^\MW(\det^{-1} V))
\]
forgetting the support.

The pushforward to $\Spec k$ also factors over the local Euler classes; and for these, we have an explicit description of the pushforward as
the trace map: if $V$ is a vector bundle with an isomorphism $\rho\colon
\det^{-1}V \to L^{\otimes 2}\otimes \omega_{X/k}$, then the pushforward to
$\Spec k$ is
given by the trace
\[
  \Tr_{k(x)/k} \colon \GW(k(x)) \to \GW(k)
\]
after identifying $\GW(k(x)) \cong \GW(k(x), \det^{-1}\m/\m^2\otimes \det^{-1}V)$
using $\rho$ as in Remark \ref{rem:twisted-gw-square-iso}.
If $V = \Omega^1_{X/k}$ is the sheaf of Kähler differentials, the resulting
element $p_{X\ast}(e(\Omega^1_{X/k}))\in \GW(k)$ is $(-1)^{\dim X}\chi(X/k)$,
where $p_X\colon X \to \Spec k$ denotes the structure
map. Indeed, by Remark \ref{rem:euler-class-dual}, we have
$\psi(e(\Omega^1_{X/k}))=\psi(e(T_X^\vee))=(-1)^{\dim X}e(T_X)$, where $\psi$
denotes the isomorphism in Remark \ref{rem:euler-class-dual} for the tangent
bundle $T_X$ of $X$. Thus, we have $p_{X\ast}(e(\Omega^1_{X/k}))=(-1)^{\dim X}p_{X\ast}(e(T_X)) =
(-1)^{\dim X}\chi(X/k)$.

Local Euler classes can be computed explicitly using the following
construction due to Scheja and Storch
\cite{SchejaStorch75Spur,SchejaStorch79Residuen}, which has been
refined
by Bachmann and Wickelgren \cite{BachmannWickelgren23SixFF}. The construction
given here can be found in \cite[Section 5]{LevineA}.
  
Let $\OO$ be a regular local ring with residue field $F$ and maximal ideal
$\m$. Assume that $\OO$ contains a field. Let $t_\ast = t_1, \dots, t_n$ be a system of parameters for $\OO$ and
let $s_\ast := s_1, \dots, s_n$ be elements of $\m$ such that the ideal $(s_1,
\dots, s_n)$ is $\m$-primary. Let $J(s_\ast) := \OO/(s_1, \dots, s_n)$. In particular,
the quotient map $J(s_\ast) \to F$ splits since $J(s_\ast)$ is complete. Such
a splitting induces an $F$-algebra structure on $J(s_\ast)$, under which
$J(s_\ast)$ is a finite dimensional $F$-algebra.

Since the ideal $(s_1, \dots, s_n)$ is $\m$-primary, we can find elements
$a_{ij} \in \OO$ satisfying
\[
  s_i = \sum_{j=1}^n a_{ij}t_j
\]
for $i = 1, \dots, n$.

\begin{definition}
  \label{def:scheja-storch}
  We define the \emph{Scheja-Storch element}
  as $e_{t_\ast,s_\ast} := \det(a_{ij}) \in J(s_\ast)$.
\end{definition}

The Scheja-Storch element satisfies a number of properties.

\begin{theorem}[{\cite[Theorem 5.1]{LevineA}}]
  \begin{enumerate}
  \item $e_{t_\ast, s_\ast} \in J(s_\ast)$ is independent of the choice of
    $a_{ij}$.
  \item The socle of $J(s_\ast)$, that is $\{x \in J(s_\ast)\mid \m\cdot x =
    0\}$, is a one-dimensional $k$-vector space with generator
    $e_{t_\ast,s_\ast}$.
  \item Let $\Tr\colon J(s_\ast) \to F$ be an $F$-linear map such that
    $\Tr(e_{t_\ast, s_\ast}) = 1$. Then the bilinear form on $J(s_\ast)$
    \[
      B_{s_\ast,t_\ast}(x,y) := \Tr(xy)
    \]
    is non-degenerate, and $[B_{s_\ast,t_\ast}] \in \GW(F)$ is independent of
    the choice of $\Tr$ (satisfying $\Tr(e_{t_\ast,s_\ast}) = 1$).
  \end{enumerate}
\end{theorem}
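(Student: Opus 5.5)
The plan is to treat the three parts through Koszul complexes and the Gorenstein property of $J(s_\ast)$. Write $K(s_\ast)$ and $K(t_\ast)$ for the Koszul complexes on $s_\ast$ and $t_\ast$ over $\OO$. Both are free resolutions, of $J(s_\ast)$ and of $F=\OO/(t_\ast)$ respectively, because both sequences are regular in the regular local ring $\OO$. Since $s_i=\sum_j a_{ij}t_j$, the matrix $(a_{ij})$ defines a map of complexes $\phi_a\colon K(s_\ast)\to K(t_\ast)$ lifting the identity in degree $0$. On $K_1$ it is given by $(a_{ij})$, on $K_p$ by $\Lambda^p(a_{ij})$, and on $K_n=\OO$ by multiplication by $\det(a_{ij})$.

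\textbf{Part 1.} Apply $\operatorname{Hom}_\OO(-,J(s_\ast))$ and take $H^n$. Since the differentials of $K(s_\ast)$ become zero after tensoring with $J(s_\ast)$, we get $H^n(\operatorname{Hom}(K(s_\ast),J))=J(s_\ast)$. Similarly $H^n(\operatorname{Hom}(K(t_\ast),J))=J(s_\ast)/(t_\ast)=F$. The induced map $F\to J(s_\ast)$ sends $1$ to $\det(a_{ij})$. Any two choices of $(a_{ij})$ give chain maps lifting the same map in degree $0$ between free resolutions, so they are chain homotopic. Hence they induce the same map on $H^n$, and the class $e_{t_\ast,s_\ast}\in J(s_\ast)$ does not depend on the choice. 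This also shows that $e_{t_\ast,s_\ast}$ is the image of an $\OO$-linear map $F\to J(s_\ast)$, so $\m\cdot e_{t_\ast,s_\ast}=0$ and $e_{t_\ast,s_\ast}$ lies in the socle.

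\textbf{Part 2.} First, $J(s_\ast)$ is an Artinian complete intersection, hence Gorenstein, so its socle is one-dimensional over $F$. It remains to show $e_{t_\ast,s_\ast}\neq 0$, which I expect to be the main obstacle. I would use duality. Apply $\operatorname{Hom}_\OO(-,\OO)$ to $\phi_a$ and use the self-duality of Koszul complexes. This identifies the map $\operatorname{Ext}^n_\OO(F,\OO)\cong F\to \operatorname{Ext}^n_\OO(J(s_\ast),\OO)\cong J(s_\ast)$, induced by the surjection $J(s_\ast)\to F$, with multiplication by $\det(a_{ij})$. By local duality over the regular (so Gorenstein) ring $\OO$ of dimension $n$, the functor $\operatorname{Ext}^n_\OO(-,\OO)$ is Matlis duality on finite-length modules. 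It therefore turns surjections into injections, so this map is injective and $\det(a_{ij})\neq 0$ in $J(s_\ast)$. The delicate point is checking that the Koszul identifications are compatible, so that the map is multiplication by $\det(a_{ij})$ with no extra unit factor. A unit would be harmless for nonvanishing anyway.

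\textbf{Part 3.} For non-degeneracy, let $x\neq 0$ in $J(s_\ast)$. The ideal $xJ(s_\ast)$ is nonzero in an Artinian local ring, so it contains the socle; hence $ex\in xJ(s_\ast)$ up to a unit and $B(x,y)=\Tr(xy)=1$ for suitable $y$. For independence, Gorenstein-ness gives that $\operatorname{Hom}_F(J(s_\ast),F)$ is free of rank one over $J(s_\ast)$, generated by any functional that is nonzero on the socle. So a second trace has the form $\Tr'=\Tr(u\,\cdot)$ with $u\in J(s_\ast)^\times$. Writing $u=\bar u+m$ with $\bar u\in F$ and $m\in\m$, we get $1=\Tr'(e)=\Tr(ue)=\bar u$, so $u\in 1+\m J(s_\ast)$. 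Since $\operatorname{char}\neq 2$ and $\m$ is nilpotent in $J(s_\ast)$, the binomial series gives $u=v^2$ for some unit $v$. Then $x\mapsto vx$ is an isometry from $B'$ to $B$, so $[B_{s_\ast,t_\ast}]\in\GW(F)$ is independent of $\Tr$.
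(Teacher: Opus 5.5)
Your proof is correct, and it takes a genuinely different route from the paper, which gives no argument of its own and simply quotes \cite[Theorem~5.1]{LevineA}. As the remark after the statement explains, in that source $e_{t_\ast,s_\ast}$ comes from the more involved Scheja--Storch construction. That construction also produces an explicit $F$-linear functional taking the value $1$ on $e_{t_\ast,s_\ast}$, and the agreement with $\det(a_{ij})$ is a separate item of the cited theorem.

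You work directly with $\det(a_{ij})$ and use only standard commutative algebra:
\begin{enumerate}
\item the comparison theorem for the Koszul resolutions gives independence of $(a_{ij})$ and membership in the socle in one step;
\item $\operatorname{Ext}^n_\OO(-,\OO)$ gives non-vanishing;
\item the Gorenstein property, together with square roots in $1+\m J(s_\ast)$ (using $\operatorname{char}F\neq 2$, the paper's standing convention), gives non-degeneracy and independence of the class.
\end{enumerate}
Your route shows that parts 1 and 2 are intrinsic to $J(s_\ast)$ as an $\OO$-module, with no splitting of $J(s_\ast)\to F$ needed. The cited route has the advantage of supplying an explicit trace.

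Three small points:
\begin{enumerate}
\item The ``delicate point'' you flag in part 2 is not an issue. Both $\operatorname{Ext}$-groups are computed as cokernels of the dual Koszul complexes, and the map between them is literally multiplication by $\det(a_{ij})$ in degree $n$.
\item You can avoid Matlis duality. The long exact $\operatorname{Ext}$-sequence of $0\to\m J(s_\ast)\to J(s_\ast)\to F\to 0$, together with $\operatorname{Ext}^{n-1}_\OO(\m J(s_\ast),\OO)=0$ (finite-length module, $\operatorname{depth}\OO=n$), already gives injectivity.
\item In part 3, ``$ex\in xJ(s_\ast)$ up to a unit'' should read $e\in xJ(s_\ast)$, i.e.\ $e=xy$ for some $y$, so that $B_{s_\ast,t_\ast}(x,y)=1$.
\end{enumerate}
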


\begin{rem}
  There is a more involved construction of the Scheja-Storch element, detailed
  in \cite[Section 5]{LevineA}. This construction also provides an explicit way of
  constructing an $F$-linear map $\Tr\colon J(s_\ast) \to F$ satisfying
  $\Tr(e_{t_\ast,s_\ast}) = 1$.

  The more involved construction of $e_{t_\ast,s_\ast}
  \in J(s_\ast)$ is equivalent to the construction described above by \cite[Theorem
  5.1 5.]{LevineA}. The construction above is more amenable to the
  computations in Section \ref{sec:coverings}.
\end{rem}

\begin{rem}
  In the discussion in \cite[Section 5]{LevineA}, it is assumed that the
  map to the residue field $\OO \to F$ splits. In the construction presented
  here, this is only necessary to obtain an $F$-algebra structure on
  $J(s_\ast)$ so that there is an $F$-linear map $J(s_\ast) \to F$ sending
  $e_{t_\ast,s_\ast}$ to $1$. But for this, it is enough to have a splitting of
  the quotient map $J(s_\ast) \to F$, which always exists.
\end{rem}

We can use this construction to compute local Euler classes.

\begin{theorem}[{\cite[Corollary 5.3]{LevineA}}]
  \label{thm:local-euler-class-ss}
  Let $p\colon V \to X$ be a rank $d$ vector bundle over a
  $d$-dimensional, smooth scheme $X$ over $k$ and let $s\colon X \to V$ be a
  section. Suppose a closed point $x \in X$ is an isolated zero of $s$. Choose a
  framing $e_1, \dots, e_d$ for $V$ in a neighbourhood of $x$ and let $t_\ast
  := t_1, \dots, t_d$ be a system of parameters for the maximal ideal $\m_x
  \subset \OO_{X,x}$. Write $s= \sum_{i=1}^d s_ie_i$ near $x$ and let $s_\ast
  = s_1, \dots, s_d$. Then
  \[
    e_x(V,s) \in H^d_x(X,\K_d^\MW(\det^{-1}V)) = \GW(k(x),
    \det^{-1}\m/\m^2\otimes \det^{-1}V)
  \]
  is given by
  \[
    e_x(V,s) = [B_{s_\ast,t_\ast}] \otimes \frac{\partial}{\partial t_1}\wedge
    \dots \wedge \frac{\partial}{\partial t_d} \otimes (e_1 \wedge \dots
    \wedge e_d)^{-1}.
  \]
\end{theorem}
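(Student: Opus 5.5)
This is Levine's Corollary 5.3. The plan is to reduce to the universal case, a trivial bundle with a section given by a regular sequence, and then identify the local Euler class with the Scheja--Storch form through the purity isomorphism.

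\emph{Step 1: localisation.} Since $x$ is an isolated zero, excision lets us replace $X$ by any Nisnevich (or Zariski) neighbourhood of $x$ without changing $H^d_x(X,\K^\MW_d(\det^{-1}V))$ or $e_x(V,s)$. We shrink $X$ to an affine neighbourhood on which $e_1,\dots,e_d$ trivialise $V$ and $t_1,\dots,t_d$ are defined. The framing identifies $V\cong\A^d_X$, $\det^{-1}V\cong\OO_X$ via $(e_1\wedge\dots\wedge e_d)^{-1}$, and $s$ with the map $(s_1,\dots,s_d)\colon X\to\A^d$. Then $e_x(V,s)$ is the pullback along this map of the Thom class of $\A^d$ supported at the origin, i.e. the class $s^\ast[0]\in H^d_x(X,\K^\MW_d)$. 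The factor $(e_1\wedge\dots\wedge e_d)^{-1}$ in the formula records exactly this trivialisation of the twist.

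\emph{Step 2: the transversal case.} First treat the case $s_i=t_i$, so $(s_\ast)=\m$ and $J(s_\ast)=k(x)$. Here $e_{t_\ast,s_\ast}=1$ and $B=\langle1\rangle$. By construction of the purity isomorphism, the pullback of the Thom class along a map that is étale at $x$ is the class $\langle1\rangle\otimes\partial_{t_1}\wedge\dots\wedge\partial_{t_d}$. The generator of $\det^{-1}\m/\m^2$ is dual to $\bar t_1\wedge\dots\wedge\bar t_d$, which is how the tangent vectors enter.

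\emph{Step 3: general $s_\ast$, reduction to a finite algebra.} Write $s_i=\sum a_{ij}t_j$. The map $s\colon \Spec\OO_{X,x}\to\A^d$ is finite and flat over the origin near $x$, with fibre $J(s_\ast)$. I would deform $s$ linearly, taking $s+\lambda\cdot c$ for generic constants $c\in k^d$ after possibly enlarging the field, so that the zeros near $x$ become simple. Homotopy invariance, via the $\A^1_\lambda$-family and a support argument, then shows that $e_x$ equals the sum of the transversal contributions of the zeros, each pushed to $k(x)$. By Step 2 and the Jacobian twist, each simple zero $y$ contributes $\Tr_{k(y)/k(x)}\langle\det(\partial s_i/\partial t_j)(y)\rangle$.

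To finish, I would invoke the classical Scheja--Storch/Eisenbud--Levine--Khimshiashvili result. It says that the sum of these trace forms, weighted by the Jacobians, equals the class of $B_{s_\ast,t_\ast}$ in the special fibre. This holds because the algebra $\OO[\lambda]/(s-\lambda c)$ is flat over $k[\lambda]$, and the Scheja--Storch form is a non-degenerate form varying in this family; a non-degenerate form over $k[\lambda]$ is constant in $\GW$ by Harder's theorem/$\A^1$-invariance of $\GW$. The property $\Tr(e_{t_\ast,s_\ast})=1$ specialises at a simple zero to $\Tr(\det(a_{ij}))=1$, i.e. the form $\langle\det J^{-1}\rangle=\langle \det J\rangle$ there. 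This matches the Jacobian twist.

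\emph{Main obstacle.} I expect Step 3 to be the hardest part. Two things must be controlled there. First, the deformation argument must work with supports: the zeros must stay in a neighbourhood finite over $\A^1_\lambda$, and this may require working over a finite extension of $k(x)$ and descending via traces. Second, the trace-form description must glue compatibly with the purity identification, including the correct sign and twist conventions. Alternatively, I would cite Bachmann--Wickelgren's identification of the $\A^1$-Euler class with the Scheja--Storch form for the local model $\A^d\to\A^d$, and reduce to it via Step 1 and the étale coordinates $t_\ast$.
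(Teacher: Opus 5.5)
The paper does not prove this statement. It quotes Levine's Corollary 5.3, which is deduced there from the identification of local indices with Scheja--Storch forms due to Kass--Wickelgren and Bachmann--Wickelgren. As far as I recall, Bachmann--Wickelgren argue via coherent duality and use no deformation. Your closing fallback is therefore what the paper relies on, and your Steps 1 and 2 are fine.

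The self-contained argument of Step 3, however, fails as written. Perturbing by constants need not make the zeros simple in characteristic $p>2$, which the paper allows. Take $X=\A^1$, $x=0$, $V$ trivial and $s=t^p$. Then $J(s_\ast)=k[t]/(t^p)$ and $[B_{s_\ast,t_\ast}]=\langle 1\rangle+\frac{p-1}{2}H$. But $t^p+\lambda c$ has a single purely inseparable zero for every $\lambda$, including at the generic point of $\A^1_\lambda$ and over any field extension. So there is never a transversal configuration to which Step 2 applies. You need perturbations with generic linear terms, $s+\lambda(At+c)$, for instance $t^p+\lambda(t+c)$. For generic $(A,c)$ these have étale generic fibre in every characteristic, since $\det(Ds+A)$ is not identically zero.

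Two further points need repair before Step 3 is a proof.

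\emph{The family.} $\OO_{X,x}[\lambda]/(s-\lambda c)$ is not a finite flat $k[\lambda]$-algebra. For $\lambda_0\in k^\times$ and $c_i\neq 0$ the element $s_i-\lambda_0c_i$ is a unit, so that fibre is zero: the moving zeros leave $\Spec\OO_{X,x}$. You must work on an actual neighbourhood and arrange that the zero locus of the family is finite over all of $\A^1_\lambda$, because Harder's theorem only applies to forms over the whole of $k[\lambda]$. One way is to replace $s$ by a polynomial jet, since both sides depend only on $s$ modulo a high power of $\m_x$, and then control the zeros globally.

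\emph{The residue fields.} After specialising at some $\lambda_0\in k$, the zeros are closed points over $k$ whose residue fields need not contain $k(x)$. Their contributions cannot be pushed to $\GW(k(x))$. You would only get $\Tr_{k(x)/k}e_x(V,s)=\Tr_{k(x)/k}[B_{s_\ast,t_\ast}]$, and $\Tr_{k(x)/k}$ is not injective. The remedy is to base change to $k(x)$ first. Since $k$ is perfect, $x$ lifts to a $k(x)$-rational point of $X_{k(x)}$ that is a Nisnevich neighbourhood of $x$, so the local group, $e_x(V,s)$ and $J(s_\ast)$ are unchanged.

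These repairs are where the real work of a deformation proof lies. The duality argument of Bachmann--Wickelgren, as I recall it, sidesteps them.
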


Note that for $x \in X$ a closed point, we have
$\det^{-1}\m/\m^2=\omega_{X/k}^\vee\otimes_{\mathcal{O}_X}k(x)$. With this
observation, we get an explicit formula for the local Euler classes of a
section of $\Omega^1_{X/k}$.

\begin{corollary}
  \label{cor:local-euler-class-ss-differentials}
  Let $L$ be a line bundle over a $d$-dimensional, smooth scheme $X$ over $k$
  and let $s\colon X \to \Omega^1_{X/k}\otimes L^{\otimes 2}$ be a
  section. Suppose a closed point $x \in X$ is an isolated zero of $s$. Let $l$ be
  a local generator of $L$ around $x$ and let $t_\ast := t_1, \dots, t_d$ be a system
  of parameters for the maximal ideal $\mathfrak m_x \in \OO_{X,x}$. Write $s
  = \sum_{i=1}^d s_i \cdot dt_i\otimes l^{\otimes 2}$ near $x$ and let $s_\ast:=
  s_1, \dots, s_d$. Then we have
  \[
    e_x(\Omega^1_{X/k}\otimes L^{\otimes 2},s) = [B_{s_\ast,t_\ast}] \in \GW(k(x))
  \]
  under the canonical identification
  \[
    \GW(k(x), \det^{-1}\m/\m^2\otimes \det^{-1}\m/\m^2\otimes
    (L^{-1})^{\otimes 2d}) \cong \GW(k(x))
  \]
  from Remark \ref{rem:twisted-gw-square-iso}.

  \begin{proof}
    Note that $\Omega^1_{X/k} = T_{X/k}^\vee$ and under this identification, we
    have $(dt_1\wedge\dots\wedge dt_n)^{-1}=\partial/\partial
    t_1\wedge\dots\wedge \partial/\partial t_n$. Thus, Theorem
    \ref{thm:local-euler-class-ss} yields
    \[
      e_x(V,s) = [B_{s_\ast,t_\ast}] \otimes \frac{\partial}{\partial t_1}\wedge
      \dots \wedge \frac{\partial}{\partial t_d} \otimes \frac{\partial}{\partial t_1}\wedge
      \dots \wedge \frac{\partial}{\partial t_d}\otimes l^{\otimes 2d}
    \]
    and the twist
    \[
      \frac{\partial}{\partial t_1}\wedge
      \dots \wedge \frac{\partial}{\partial t_d} \otimes \frac{\partial}{\partial t_1}\wedge
      \dots \wedge \frac{\partial}{\partial t_d}\otimes l^{\otimes 2d}
    \]
    gets eliminated under the identification $\GW(k(x),
    \det^{-1}\m/\m^2\otimes \det^{-1}\m/\m^2\otimes \det^{-1}
    L^{\otimes 2}) \cong \GW(k(x))$.
  \end{proof}
\end{corollary}

Together with the motivic Gauß-Bonnet theorem, one can use this to compute
quadratic Euler characteristics in the following situation: let $f\colon Y\to X$ be a projective morphism from a smooth, projective,
integral, $r$-dimensional $k$-scheme $Y$ to a smooth, projective curve $X$ over $k$.
If $r$ is odd, we also assume that $X$ has a half-canonical
line bundle $M$, that is a line bundle $M$ satisfying $\omega_{X/k} \cong
M^{\otimes 2}$.
One can relate
the Euler class of $\Omega^1_{Y/k}$ to the Euler class of $\Omega^1_{X/k}$,
see \cite[Proposition 11.1]{LevineA}. Using this, one can compute the
quadratic Euler characteristic of $Y$ using the quadratic Euler characteristic
of $X$ and some properties of $f$, see \cite[Theorem 11.2]{LevineA}.

A
\emph{critical point} of $f$ is a point $y \in Y$ with $df(y) = 0$, a
\emph{critical value} of $f$ is a point $x = f(y)$ of $X$ with $y$ a critical
point. We denote the set of all critical points of $f$ by
$\critical(f)$.

For the remainder of this section, we assume that $f$ only has finitely
many critical points.

\begin{definition}
  Let $f\colon Y\to X$ be a morphism from a smooth, projective, integral
  $k$-scheme $Y$ of dimension $r$ to a smooth, projective curve $X$.
  Let $y$ be a critical point of $f$. Then we define
  \[
    e_y(df) := e_y(\Omega_{Y/k}\otimes f^\ast\omega^{-1}_{X/k};df) \in
    H^r_y(Y,K_r^\MW(\omega_{Y/k}^{-1}\otimes f^\ast\omega^{\otimes r}_{X/k})).\qedhere
  \]
\end{definition}

\begin{rem}[{\cite[pp. 2225f]{LevineA}}]
  There is a comparison isomorphism
  \begin{equation}
    \label{eq:critical-euler-class-comparison-iso}
    K_0^\MW(y) \cong H^r_y(Y,\K_r^\MW(\omega_{Y/k})) \cong
    H^r_y(Y,K_r^\MW(\omega_{Y/k}^{-1}\otimes f^\ast\omega^{\otimes r}_{X/k})).
  \end{equation}
  If $r$ is even,
  the first isomorphism in \eqref{eq:critical-euler-class-comparison-iso} is the purity isomorphism and the
  second isomorphism is induced by $\psi_{\omega_{Y/k}^{-1}\otimes f^\ast\omega^{\otimes r/2}_{X/k}}$
  in Remark \ref{rem:twisted-gw-square-iso}.

  If $r$ is odd, we can construct the comparison isomorphism \eqref{eq:critical-euler-class-comparison-iso} using a
  fixed isomorphism $\rho \colon \omega_{X/k} \to M^{\otimes 2}$ to a
  half-canonical line bundle $M$. In this situation, the first isomorphism in
  \eqref{eq:critical-euler-class-comparison-iso} is again the purity
  isomorphism and the second isomorphism is induced by
  $(\id_{\omega_{Y/k}^{-1}}\otimes f^\ast(\rho^{\otimes r})^{-1})_\ast \circ
  \psi_{\omega_{Y/k}^{-1}\otimes f^\ast M^{\otimes r}}$ in Remark
  \ref{rem:twisted-gw-square-iso}.

  We will consider $e_y(df)$ to be an element in $K_0^\MW(y) =
  \GW(k(y))$ by applying \eqref{eq:critical-euler-class-comparison-iso}.
\end{rem}

\begin{rem}
  In the situation considered here, we also have the pushforward
  \[
    i_{y\ast}\colon K_0^\MW(y) \to H^r(Y,\K_r^\MW(\omega_{Y/k}))
  \]
  forgetting the support.
\end{rem}

\begin{theorem}[{\cite[Corollary 11.4]{LevineA}}]
  \label{thm:riemann-hurwitz}
  Let $f\colon Y \to X$ be a projective morphism between a smooth, projective,
  integral scheme $Y$ of dimension $r$ over $k$ and a smooth, projective
  curve $X$ over $k$. Suppose $f$ has only finitely many critical points. In
  addition, suppose that $X$ admits a half-canonical line bundle in case $r$ is
  odd. Then
  \[
    (-1)^r\chi(Y/k) = \sum_{y \in \critical(f)} \pi_{Y\ast}i_{y\ast}e_y(df) -
    D(f)\cdot H
  \]
  in $\GW(k)$. Here
  \[
    D(f) = \frac{1}{2}[\deg_k(c_r(\Omega_{Y/k}\otimes
    f^\ast\omega^{-1}_{X/k})) - \deg_k(c_r(\Omega_{Y/k}))] \in \Z
  \]
  and $c_r$ is the $r$-th Chern class in the Chow ring of $Y$.
\end{theorem}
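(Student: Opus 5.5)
The plan is to compare two Euler classes on $Y$: that of $\Omega_{Y/k}$, whose pushforward is $(-1)^r\chi(Y/k)$, and that of $E:=\Omega_{Y/k}\otimes f^\ast\omega^{-1}_{X/k}$, whose pushforward the section $df$ computes locally. An element of $\GW(k)$ is determined by its rank together with its image in the Witt ring $W(k)=\GW(k)/H$. So it suffices to show that the two pushforwards agree in $W(k)$ and to compute the difference of their ranks. This difference of ranks should produce exactly the correction term $D(f)\cdot H$.

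First, $df$ is a section of $E$ whose zero locus is the finite set $\critical(f)$. By excision and purity (the discussion before Theorem \ref{thm:local-euler-class-ss}), $e(E)$ is the image of $\sum_{y}e_y(df)$ under the forget-support map. Pushing forward along $\pi_Y$, using the comparison isomorphism \eqref{eq:critical-euler-class-comparison-iso} to land in $H^r(Y,\K^\MW_r(\omega_{Y/k}))$, gives
\[
  \pi_{Y\ast}e(E)=\sum_{y\in\critical(f)}\pi_{Y\ast}i_{y\ast}e_y(df).
\]
This is where the half-canonical bundle $M$ is needed when $r$ is odd: the twist $\det^{-1}E=\omega_{Y/k}^{-1}\otimes f^\ast\omega_{X/k}^{\otimes r}$ differs from $\omega_{Y/k}^{-1}$ only by a square, namely $f^\ast M^{\otimes 2r}$. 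Second, by Motivic Gauß-Bonnet and Remark \ref{rem:euler-class-dual}, $\pi_{Y\ast}e(\Omega^1_{Y/k})=(-1)^r\chi(Y/k)$.

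For the ranks, I will use that the rank of $\pi_\ast e(V)$ equals $\deg_k c_r(V)$. This holds because the map $\K^\MW\to\K^M$ obtained by killing $\eta$ sends Euler classes to top Chern classes and is compatible with pushforwards. The rank difference is therefore
\[
  \deg c_r(E)-\deg c_r(\Omega_{Y/k})=2D(f).
\]
This also shows $D(f)\in\Z$ once the Witt parts are known to agree: the Witt parts agreeing forces the two forms to differ by a multiple of $H$, whose rank is even.

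The main obstacle is showing that $e(E)$ and $e(\Omega_{Y/k})$ have the same image in cohomology of the Witt sheaf, i.e. after inverting $\eta$. I would use Ananyevskiy's results for SL-oriented, $\eta$-invertible theories. There the Euler class of a twist $V\otimes N$ by a line bundle can be expressed, via the splitting principle, through Borel/Pontryagin-type classes of $V$ and characteristic classes of $N$. Here $N=f^\ast\omega_{X/k}^{-1}$ is pulled back from the curve $X$, and any positive-degree characteristic class of $N$ lives in $H^{\geq 2}$ of the curve, which vanishes. So the twist contributes nothing in Witt cohomology, and the pushforwards agree in $W(k)$. The delicate point I expect is the sign and twist bookkeeping when $r$ is odd, where one must transport along $\rho\colon\omega_{X/k}\cong M^{\otimes 2}$ consistently with the identification used in \eqref{eq:critical-euler-class-comparison-iso}. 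Combining the rank difference $2D(f)$ with the Witt equality then gives
\[
  (-1)^r\chi(Y/k)=\sum_{y\in\critical(f)}\pi_{Y\ast}i_{y\ast}e_y(df)-D(f)\cdot H.
\]
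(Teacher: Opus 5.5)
Your proposal is correct and follows the same route as Levine's proof of \cite[Corollary 11.4]{LevineA}, which the paper quotes without reproving. You identify $\sum_{y}\pi_{Y\ast}i_{y\ast}e_y(df)$ with the pushforward of $e(\Omega_{Y/k}\otimes f^\ast\omega^{-1}_{X/k})$, show that this class and $e(\Omega_{Y/k})$ have the same image in Witt-sheaf cohomology, and read off the correction $D(f)\cdot H$ from the difference of the degrees of the top Chern classes. One small simplification: for $r$ odd both bundles have odd rank, so both Euler classes already vanish in Witt cohomology and the twist bookkeeping you worry about is moot (the half-canonical bundle is only needed to define the pushforward to $\GW(k)$), while for $r$ even the Witt-level invariance under twisting holds for any line bundle, not just one pulled back from a curve.
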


\begin{rem}
  In Theorem \ref{thm:riemann-hurwitz}, we have
  \[
    D(f) = (-1)^{r-1}\cdot \frac 12 \cdot \chi^\topo(Y_t)\cdot \chi^\topo(X)
  \]
  where $Y_t$ is any smooth fibre of $f$. This follows essentially by
  \cite[Proof of Theorem~11.2]{LevineA} and a Gauß-Bonnet formula for the
  topological Euler characteristic. This yields another argument why $D(f)$ is
  an integer: since $X$ is odd-dimensional, we have that $\chi^\topo(X)$ is
  even.

  The same also holds in positive characteristic after replacing the topological
  Euler characteristic with the étale Euler characteristic.
\end{rem}

Consider a critical point $y \in Y$ and $x = f(y) \in X$ and denote the
maximal ideal of the local ring $\OO_{X,x}$ at $x$ by $\m_x$. A parameter
$t_x \in \m_x$ is \emph{normalised} if there exists a generating section
$\lambda_{M,x}$ of $M$ around $x \in X$ such that
\[
  \rho^{\otimes -1}(\partial/\partial t_x) = \lambda_{M,x}^{-2}\otimes k(x)
\]
via the canonical identification $\omega^{-1}_{X/k} \otimes k(x) \cong (\m_x/\m_x^2)^\vee$.
See \cite[Discussion before Corollary~11.6, p.~2227]{LevineA}.

By applying Theorem \ref{thm:local-euler-class-ss}, one obtains the following
corollary.

\begin{corollary}[{\cite[Corollary 11.6]{LevineA}}]
  \label{cor:riemann-hurwitz-ss}
  Let $f\colon Y \to X$ be a projective morphism from a smooth, projective,
  integral scheme $Y$ of dimension $r$ over $k$ and a smooth, projective
  curve over $k$. Suppose $f$ has only finitely many critical points. In
  addition suppose that $X$ admits a half-canonical line bundle in case $r$ is
  odd.

  For each $y \in \critical(f)$, we choose a system of parameters $t_1^y,
  \dots, t_r^y \in \m_y$ and a parameter $t_x \in \m_x$, where $x = f(y)$; if
  $r$ is odd, we assume that $t_x$ is normalised. Write
  \[
    d(f^\ast(t_x)) = \sum_{i=1}^rs_i^y\cdot dt_i^y
  \]
  with $s_i^y \in \OO_{Y,y}$. This yields a class of the Scheja-Storch form
  $[B_{s_\ast^y,t_\ast^y}]\in \GW(k(y))$. Then
  \[
    (-1)^r\chi(Y/k) = \sum_{y \in \critical(f)} \Tr_{k(y)/k}([B_{s_\ast^y,t_\ast^y}]) -
    D(f)\cdot H
  \]
  in $\GW(k)$.

  Moreover, we have $e_y(df) = [B_{s_\ast^y,t_\ast^y}] \in \GW(k(y))$ and $\pi_{Y\ast}i_{y\ast}e_y(df) =
  \Tr_{k(y)/k}([B_{s_\ast^y,t_\ast^y}]) \in \GW(k)$ for all $y \in \critical(f)$.
\end{corollary}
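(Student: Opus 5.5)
This follows by combining Theorem \ref{thm:riemann-hurwitz} with Theorem \ref{thm:local-euler-class-ss}, specialised to the bundle $V=\Omega^1_{Y/k}\otimes f^\ast\omega^{-1}_{X/k}$ and the section $df$. The global statement is already Theorem \ref{thm:riemann-hurwitz}. So the task is to identify each local term: we must show $e_y(df)=[B_{s^y_\ast,t^y_\ast}]$ in $\GW(k(y))$ under the comparison isomorphism \eqref{eq:critical-euler-class-comparison-iso}, and that $\pi_{Y\ast}i_{y\ast}$ becomes $\Tr_{k(y)/k}$.

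\emph{Step 1: a local framing.} Fix $y\in\critical(f)$ with $x=f(y)$. Since $t_x$ is a parameter at $x$, $dt_x$ generates $\omega_{X/k}$ near $x$. Hence $\partial/\partial t_x$ generates $\omega^{-1}_{X/k}$, and near $y$ the bundle $V$ has framing $dt^y_i\otimes f^\ast(\partial/\partial t_x)$. In this framing the section $df$ corresponds to $d(f^\ast t_x)\otimes f^\ast(\partial/\partial t_x)$. Its coordinates are therefore exactly the $s^y_i$. Since $y$ is an isolated zero, $(s^y_\ast)$ is $\m_y$-primary. Theorem \ref{thm:local-euler-class-ss} then gives
\[
e_y(df)=[B_{s^y_\ast,t^y_\ast}]\otimes \tfrac{\partial}{\partial t^y_1}\wedge\dots\wedge\tfrac{\partial}{\partial t^y_r}\otimes (dt^y_1\wedge\dots\wedge dt^y_r)^{-1}\otimes f^\ast(\partial/\partial t_x)^{-r}.
\]

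\emph{Step 2: the twist.} The twisting element must be shown to map to $1$ under \eqref{eq:critical-euler-class-comparison-iso}. This is the main obstacle. For $r$ even, the twist equals $(\partial_{t_1}\wedge\dots\wedge\partial_{t_r})^{\otimes 2}\otimes (f^\ast dt_x)^{\otimes r}$. This is a square $\lambda^{\otimes 2}$ with $\lambda=\partial_{t_1}\wedge\dots\wedge\partial_{t_r}\otimes f^\ast dt_x^{\otimes r/2}$, so $\psi_\lambda$ sends $1$ to it, exactly as in Corollary \ref{cor:local-euler-class-ss-differentials}. For $r$ odd, we rewrite $(\partial/\partial t_x)^{-r}$ using $\rho$. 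Normalisation gives $\rho^{-1}(\partial/\partial t_x)=\lambda_{M,x}^{-2}$ at $x$, so $\rho^{\otimes r}$ carries $(dt_x)^{\otimes r}$ to $\lambda_{M,x}^{\otimes 2r}$. The twist thus becomes the square of $\partial_{t_1}\wedge\dots\wedge\partial_{t_r}\otimes f^\ast\lambda_{M,x}^{\otimes r}$. Without normalisation, a nonsquare unit $u\in k(x)^\times$ would appear and produce a spurious factor $\langle u\rangle$. The check only involves values in the fibre at $y$, since classes in $\GW(k(y),-)$ depend only on the fibre line.

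\emph{Step 3: the pushforward.} For a closed point, $\pi_{Y\ast}i_{y\ast}\colon\GW(k(y))\to\GW(k)$ is identified with $\Tr_{k(y)/k}$, by the description of pushforwards of local Euler classes recalled before Theorem \ref{thm:local-euler-class-ss}. This uses the identification $\det^{-1}V\cong\omega_{Y/k}\otimes(\text{square})$ fixed in Step 2. Summing over $\critical(f)$ and substituting into Theorem \ref{thm:riemann-hurwitz} yields the formula.
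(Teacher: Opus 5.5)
Your proposal is correct and follows the paper's route; the paper states this corollary as a direct application of Theorem~\ref{thm:local-euler-class-ss} and cites Levine's Corollary~11.6 for the details. You apply that theorem to $df$ in the framing $dt^y_i\otimes f^\ast(\partial/\partial t_x)$, check that the resulting twist is the image of $1$ under \eqref{eq:critical-euler-class-comparison-iso} (this is where the normalisation of $t_x$ is needed for odd $r$), identify $\pi_{Y\ast}i_{y\ast}$ with $\Tr_{k(y)/k}$, and substitute into Theorem~\ref{thm:riemann-hurwitz}.
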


\begin{definition}
  \label{def:a1-milnor-no}
  We define the \emph{$\A^1$-Milnor number} of $f$ at a critical point $y \in
  Y$ to be $\mu^{\A^1}_y(f) := e_y(df)$ considered as an element of $\GW(k(y))$.
\end{definition}

\begin{rem}
  Definition \ref{def:a1-milnor-no} is an extension of the $\A^1$-Milnor
  number as defined by Kass-Wickelgren \cite[Definition
  35]{KassWickelgren2019EKLdegree}. Indeed, by
  \cite[Main Theorem]{KassWickelgren2019EKLdegree}, the $\A^1$-Milnor number as defined
  in \cite[Definition 35]{KassWickelgren2019EKLdegree} is computed by the same
  quadratic form $B_{s_\ast^y,t_\ast^y}$ as in Corollary
  \ref{cor:riemann-hurwitz-ss}.
\end{rem}

\begin{rem}
  \label{rem:milnor-no-rank}
  If $k = \C$, the rank of $\mu^{\A^1}_y(f)$ agrees with the classical
  Milnor number of $f$ at $y$. See \cite[Remark
  37]{KassWickelgren2019EKLdegree} for details. Thus, it makes sense to define
  $\mu_y(f) := \rank\mu^{\A^1}_y(f)$ over an arbitrary field $k$.
\end{rem}

\section{Geometrically Cyclic Branched Coverings}
\label{sec:coverings}
Geometrically cyclic branched coverings are an algebro-geometric analogue of
branched coverings in topology. In topology, there is a Riemann-Hurwitz formula relating the
topological Euler characteristics of the spaces involved in a branched
covering. In this section, we shall introduce geometrically cyclic branched
coverings and we shall analyse whether
there is a similar Riemann-Hurwitz formula in such a situation. This analysis
splits up into two parts: Section \ref{sec:etale-classes} deals with the
contribution from the étale part of such a covering and the Section
\ref{sec:odd-contribution} deals with the
contribution from the branch locus.

Throughout this section, let $n\ge 1$ be an integer that is invertible in
$k$.

\label{sec:coverings-def}
\begin{definition}
  An \emph{$n$-fold geometrically cyclic covering branched at $Z$}
  is a map between smooth,
  projective schemes $f\colon Y \to
  X$ over $k$ together with a smooth, closed subscheme $Z\subset X$ satisfying
  the following condition: there exists a line bundle $L \to X$
  and a section $s \colon X \to L^{\otimes n}$ with zero subscheme $Z\subset
  X$ and a morphism $Y \to L$ such that we have a fibre square
  \[
    \begin{tikzcd}
      Y \ar[r]\ar[d, "f"] & L\ar[d, "m_n"]\\
      X \ar[r, "s"] & L^{\otimes n}.
    \end{tikzcd}
  \]
  Here $m_n\colon L \to L^{\otimes n}$ is the map sending a local generator
  $t$ of $L$ to $t^n$.

  We call $Z\subset X$ the \emph{branch locus} of $f$.
\end{definition}

\begin{rem}
  The name \emph{geometrically} cyclic covering is chosen since an $n$-fold
  geometrically cyclic covering becomes a
  Galois covering with Galois group $\Z/n\Z$ after base-changing to an
  algebraically closed field. That is, we have for $Y \to X$ an $n$-fold geometrically
  cyclic covering over an algebraically closed field $k$ that $\Aut_X(Y) \cong
  \Z/n\Z$ and $Y/\Aut_X(Y) = X$.
\end{rem}

\begin{rem}
  \label{rem:ramification-factor}
  The closed immersion $Z\to X$ of the branch locus factors through the
  covering morphism $Y\to X$. This
  yields a canonical closed immersion $Z \to Y$.
  Thus, we can consider points in $Z$ as points in $X$ and points
  in $Y$. In the following, we will consider $x \in Z$ as $x \in X$ or $x \in
  Y$ without writing the inclusion morphisms.
\end{rem}

\begin{rem}
  \label{rem:etale-part}
  Note that the induced morphism on open subschemes $f|_{Y\setminus Z}\colon Y\setminus Z \to
  X\setminus Z$ is étale.
\end{rem}

\begin{rem}
  This is an algebraic version of an $n$-fold covering of topological spaces
  $\pi\colon Y \to X$ branched at $A$, i.e. $\pi\colon
  Y\setminus \pi^{-1}(A) \to X\setminus A$ is an $n$-fold covering of
  topological spaces and $\pi\colon
  \pi^{-1}(A) \to A$ is a homeomorphism, where $(X,A)$ is a suitably nice
  pair of topological spaces, such as a CW pair. The topological Euler characteristic now satisfies
  \[
    \chi^\topo(Y) = n\cdot \chi^\topo(X) - (n-1)\cdot \chi^\topo(A).\qedhere
  \]
  For geometrically cyclic branched coverings, the same relation also holds in
  for the étale Euler characteristic.
\end{rem}

\begin{rem}
  One applies this classically to a branched covering of curves $Y\to X$ to
  yield the Hurwitz formula relating the topological Euler characteristics of
  $Y$ and $X$.
\end{rem}

For computing the quadratic Euler characteristic, we consider the following situation: for the remainder of this
section, fix an $n$-fold geometrically cyclic covering $f\colon Y \to X$ branched
at $Z$. Let $\rho\colon X \to C$ be a
morphism to a smooth, projective curve $C$ whose canonical bundle is a square,
for example $C = \PP^1$. Denote
the closed immersion $Z \to X$ by $i$. Assume that $\rho$ has only finitely many critical points, all of
which are outside of $Z$, that the composition
$\rho\circ i$ also has only finitely many critical points, and the $n$ is
invertible in $k$.

\begin{rem}
  The assumption of having a morphism $\rho\colon X \to C$ as above is
  mild. Indeed as detailed in Appendix \ref{sec:lefschetz}, one can find a
  sufficiently nice, codimension two subscheme $L$ of $X$ such that by blowing up $X$ along $L$, we obtain a morphism $\rho\colon
  \Bl_LX \to \PP^1$ satisfying the above properties and such that the induced
  morphism $\Bl_{f^{-1}(L)}Y\to \Bl_LX$ is still an $n$-fold geometrically
  cyclic covering branched at the smooth, closed subscheme
  $\Bl_{Z\cap L}Z \subset \Bl_LX$. See Remark \ref{rem:map-construction} below for
  details.
\end{rem}

Recall from Section \ref{sec:euler-class} that we denote the set of critical
points of $f \colon Y \to X$ by $\critical(f) \subset Y$.

\begin{proposition}
  \label{prop:critical-decomp}
  Let $n \ge 2$. Then we have
  \[
    \critical(\rho\circ f) = \critical(\rho\circ i) \cup f^{-1}(\critical(\rho)).
  \]
  Moreover, this union is disjoint.

  \begin{proof}
    The union is disjoint since $\critical(\rho)\cap Z = \emptyset$. Thus, it
    remains to show that the equality holds.

    We start with the inclusion ``$\supset$''. We have $f^{-1}(\critical(\rho)) \subset
    \critical(\rho\circ f)$ by functoriality of taking differentials.

    Let $x \in \critical(\rho\circ i)$. We need to
    show $x \in \critical(\rho\circ f)$. Pick a trivialisation of $L$ around
    $x$. Let $A$ be the local ring of $X$ at
    $x$, let $t_0 \in A$ be the element corresponding with the section
    $s\colon X\to L^{\otimes n}$ under the chosen trivialisation of $L$, and let $t_0, t_1, \dots, t_r$ be a set of local parameters of
    $A$. Therefore, $A/(t_0)$ is the local ring of $Z$ at $x$ with set
    of parameters $\bar t_1, \dots, \bar t_r$ and $A[z]/(z^n-t_0)$ is the local ring of
    $Y$ at $x$ with set of parameters $z, t_1, \dots, t_r$. Let $dt\in
    \omega_{C/k,\rho(x)}$ be a generator. Hence, we can write
    \[
      d\rho(dt) = \sum_{i=0}^r s_idt_i.
    \]
    Since $d(\rho\circ i)$ varnishes, we have $s_i \in (t_0, \dots,
    t_r)\subset A$ for $i =1, \dots, r$. Over $A[z]/(z^n-t_0)$, we can write
    \[
      d(\rho\circ f)(dt) = ns_0z^{n-1}dz + \sum_{i=1}^rs_idt_i.
    \]
    Since $z^n = t_0$ in $A[z]/(z^n-t_0)$, we have $(t_0, \dots, t_r) \subset
    (z, t_1, \dots, t_r) \subset A[z]/(z^n-t_0)$. In particular because $n \ge
    2$, we have
    $ns_0z^{n-1}, s_1, \dots, s_r \in (z, t_1, \dots, t_r)$, which is the
    maximal ideal of $A[z]/(z^n-t_0)$. And thus, $d(\rho\circ f)(dt)$ vanishes,
    which implies $d(\rho\circ f) = 0$. Thus, $x \in \critical(\rho\circ f)$
    as desired.

    For the other inclusion, let $x \in \critical(\rho\circ f)$. Note that
    $Y\setminus Z \to X\setminus Z$ is étale by Remark \ref{rem:etale-part}. Thus, if $x \in Y\setminus Z$, we have that
    $df_x$ is invertible and thus, we have
    \[
      0 = (df_x)^{-1}\circ d(\rho\circ f)_x=
      (df_x)^{-1} \circ df_x \circ d\rho_{f(x)} = d\rho_{f(x))}.
    \]
    Thus, $f(x)
    \in \critical(\rho)$, which implies $x \in f^{-1}(\critical(\rho))$.

    If $x \in Z$, note that $i = f \circ i'$ where $i'\colon Z \to Y$ is
    the closed immersion from Remark~\ref{rem:ramification-factor}. Thus, we have
    \[
      d(\rho\circ i)_x = d(\rho\circ f\circ i')_x= di'_x\circ  d(\rho\circ f)_x
      = 0.
    \]
    And thus, $x \in \critical(\rho\circ i)$.
  \end{proof}
\end{proposition}

\begin{rem}
  Note that Proposition \ref{prop:critical-decomp} fails for $n =1$: in this
  case, the covering map $f\colon Y \to X$ is the identity and we have
  $\critical(\rho\circ f) = f^{-1}(\critical(\rho))$.
\end{rem}

Morally speaking, Proposition \ref{prop:critical-decomp} tells us that the critical points of
$\rho\circ f$ are of two kinds: there are the critical points coming from $Z$
and there are the critical points coming from $X$. We call the critical points
contained in $\critical(\rho\circ i)$ the \emph{critical points from the
  branch locus}
and we call the critical points contained in $f^{-1}(\critical(\rho))$ the \emph{étale critical
  points}. In the following, we will deal with both kinds of critical points
separately.

\begin{rem}\label{rem:induced-kaehler-section}
  A morphism $\rho \colon X \to C$ from a smooth, projective $k$-scheme $X$ to a
  smooth, projective curve $C$ induces a map
  $d\rho\colon \rho^\ast\omega_{C/k}\to \Omega_{X/k}$, which is the same as a
  section $\omega_\rho$ of $\Omega_{X/k}\otimes\rho^\ast\omega_{X/k}^{-1}$. In
  particular, $d\rho$ is zero at some point $x \in X$ if and only if the
  $\omega_\rho$ is zero at $x$.
\end{rem}

\subsection{The Étale Contribution}
\label{sec:etale-classes}
The following lemma is a special case of \cite[Chapter 2, Theorem 5.6]{ScharlauQHF}. We include
a proof here for the convenience of the reader.

\begin{lemma}
  \label{lem:trace-quadratic-linear}
  Let $L/k$ be a finite field extension. Then
  \[
    \Tr_{L/k}\colon \GW(L) \to \GW(k)
  \]
  is $\GW(k)$-linear with respect to the $\GW(k)$-module structure on $\GW(L)$
  induced by the pushforward $\GW(k) \to \GW(L)$.

  \begin{proof}
    The trace map is additive by \cite[Chapter 2, Discussion after Lemma~5.5]{ScharlauQHF}. Thus, we only need to
    prove the compatibility with scalar multiplication. For this,
    it is enough to consider generators; that is, we have to show that
    $\Tr_{L/k}(\langle a\rangle \cdot \langle b\rangle) = \langle a\rangle
    \cdot \Tr_{L/k}(\langle b\rangle)$ for $a \in k^\times$ and $b \in L^\times$.

    Let $x,y \in L$. Then we have
    \[
      \Tr_{L/k}(\langle a\rangle \cdot \langle b\rangle)(x,y) =
      \Tr_{L/k}(abxy) = a\Tr_{L/k}(bxy) = (\langle a\rangle\cdot
      \Tr_{L/k}(\langle b\rangle))(x,y).\qedhere
    \]
  \end{proof}
\end{lemma}

\begin{lemma}
  \label{lem:cyclic-trace}
  Let $s \in k$ be an element such that $k(\sqrt[n]{s})$ has degree $n$ over $k$. That
  is, assume $z^n-s\in k[s]$ is irreducible. Also, assume that $n$ is invertible
  in $k$. Then
  \[
    \Tr_{k(\sqrt[n]{s})/k}(\langle 1\rangle) =
    \begin{cases}
      \langle n \rangle + \frac{n-1}{2}\cdot H & \text{if $n$ is odd},\\
      \langle n\rangle + \langle ns\rangle + \frac{n-2}{2}\cdot H& \text{if
                                                                   $n$ is even.}
    \end{cases}
  \]

  \begin{proof}
    Model $k(\sqrt[n]{s})$ as $L := k[z]/(z^n-s)$. This has $k$-basis $1, z^1,
    \dots, z^{n-1}$. Note that for $0 \le i \le n$, we have
    \[
      \Tr_{L/k}(z^i) =
      \begin{cases}
        n & \text{if }i = 0,\\
        ns& \text{if }i = n,\\
        0 & \text{otherwise}.
      \end{cases}
    \]
    Therefore, we get the following diagonal matrix representing
    $\Tr_{L/k}(\langle 1\rangle)$ if $n$ is odd: we have a diagonal entry
    $n$ for the basis element $1$ and for every pair $(z^i, z^{n-i}), i = 1,
    \dots, \frac{n-1}{2}$ a block matrix
    \[
      \begin{pmatrix}
        0 & ns\\
        ns & 0
      \end{pmatrix}
    \]
    on the diagonal. This matrix represents $H$ in $\GW(k)$, and thus, we get
    the statement for $n$ odd.

    For $n$ even, we can run the same computation when noting that we get an
    additional $ns$ on the diagonal for the basis element $z^{n/2}$ and we
    therefore get a hyperbolic form contribution for every pair
    $(z^i,z^{n-i})$ with $i = 1, \dots, \frac{n-2}{2}$, plus one copy of the
    rank one form $\langle ns\rangle$.
  \end{proof}
\end{lemma}

\begin{lemma}
  \label{lem:etale-scheja-storch-change}
  Let $\varphi\colon A \to B$ be a homomorphism of local rings. Assume that
  the induced map
  $\Spec\varphi\colon \Spec B \to \Spec A$ is
  étale. Let $s_1, \dots, s_r\in A$ be $\mathfrak m$-primary with $\mathfrak m
  \subset A$ the maximal ideal. Let $t_1, \dots, t_r$ be local parameters for
  $A$. Then $\varphi(t_1), \dots, \varphi(t_r)$ are local parameters for $B$
  and $\varphi(s_1), \dots, \varphi(s_r)$ are $\m'$-primary with $\m'\subset
  B$ the maximal ideal.

  Let $e_{s_\ast, t_\ast} \in J(s_\ast) = A/(s_1, \dots, s_r)$ and
  $e_{\varphi(s_\ast),\varphi(t_\ast)} \in J(\varphi(s_\ast)) =
  B/(\varphi(s_1), \dots, \varphi(s_r))$ be the Scheja-Storch generators. Then
  we have $\varphi(e_{s_\ast, t_\ast}) =
  e_{\varphi(s_\ast),\varphi(t_\ast)}$.

  \begin{proof}
    Let $\mathfrak m'$ be the maximal ideal of $B$.
    Since $\Spec\varphi$ is étale, we have that $\mathfrak m'/\mathfrak m'^2 =
    \mathfrak m/\mathfrak m^2\otimes_{A/\mathfrak m}B/\mathfrak m'$. In
    particular, Nakayama's Lemma yields that $\varphi(t_1),\dots,
    \varphi(t_r)$ are local parameters for $B$.

    The ideal $I = (\varphi(s_1), \dots, \varphi(s_r))$ is $\m'$-primary
    since the radical ideal of $I$ is $\m'$,
    which is a maximal ideal, see for example \cite{StackExchangePrimary}. We include this short argument here for
    the convenience of the reader. Let $x,y \in B$ with $xy \in I$ and assume
    that $y \not\in \sqrt{I} = \m'$. Then we have to show that $x \in
    I$. Since $\m'$ is maximal, we can find $m \in \m'$ and $b \in B$ with $by
    + m = 1$. Since $\sqrt I = \m'$, there is some $n \ge 1$ with $m^n \in I$
    and thus, we have $1 = 1^n = (by+m)^n = b'y + m^n$ for some $b' \in
    B$. Therefore, we have $x = x(b'y + m^n) = b'xy + xm^n \in I$. Hence, $I$ is
    $\m'$-primary.

    Now, $\varphi(e_{s_\ast, t_\ast}) =  e_{\varphi(s_\ast),\varphi(t_\ast)}$ directly
    follows from the definition of the Scheja-Storch element, since all
    involved equations are compatible with applying $\varphi$; see Definition
    \ref{def:scheja-storch}.
  \end{proof}
\end{lemma}

\begin{rem}
  \label{rem:preimage-description}
  If $x\in \critical(\rho)$, we have the following local picture: let
  $t$ be a local generator of $L$ around $x$. So locally around $x$, we can
  write $s = s_xt^n$ for some $s_x \in \OO_{X,x}$. Then every local ring
  $\OO_{Y,y}$ with $y \in f^{-1}(x)$ is a factor of the ring
  $\OO_{X,x}[z]/(z^n-s_x)$. Indeed, this follows directly from the description
  of $Y$ as a fibre product of $X$ and $L$.
\end{rem}

\begin{proposition}
  \label{prop:etale-euler-class-change}
  Let $y \in f^{-1}(\critical(\rho))$ be a critical point and $x = f(y)$. Then
  \[
    \Tr_{k(y)/k(x)}(e_y(d(\rho\circ f))) = e_{x}(d\rho)\cdot
    \Tr_{k(y)/k(x)}(\langle 1\rangle) \in \GW(k(x)),
  \]
  where we consider $\langle 1\rangle$ as an element in $\GW(k(y))$.

  \begin{proof}
    By Corollary
    \ref{cor:riemann-hurwitz-ss}, we can use the language of Scheja-Storch
    forms to prove the statement.

    By Lemma
    \ref{lem:etale-scheja-storch-change}, the Scheja-Storch form on
    $\OO_{Y,y}$ is the extension of scalars of the Scheja-Storch
    form on $\OO_{X,x}$. Thus, by Lemma \ref{lem:trace-quadratic-linear}, we get
    \[
      \Tr_{k(y)/k(x)}(e_y(d(\rho\circ f))) = \Tr_{k(y)/k(x)}(e_x(d(\rho))\cdot
      \langle 1\rangle) = e_x(d\rho)\cdot \Tr_{k(y)/k(x)}(\langle 1\rangle)
      \in \GW(k(x)).\qedhere
    \]
  \end{proof}
\end{proposition}

We are now going to consider two special cases of the local ring in Remark~\ref{rem:preimage-description}.

\begin{corollary}
  \label{cor:etale-irreducible-contribution}
  Let $x \in \critical(\rho)$ be a critical point, $t$ a local
  generator of $L$ around $x$ and let $s_x \in \OO_{X,x}$ with $s = s_xt^n$.
  Suppose $z^n-s_x$ is irreducible in $k(x)[z]$. Then $f^{-1}(x)$ only contains a single point $y$ and we
  have
  \[
    \Tr_{k(y)/k(x)}(e_y(d(\rho\circ f))) = e_x(d\rho)\cdot
    \beta \in \GW(k(x))
  \]
  where
  \[
    \beta =
    \begin{cases}
      \langle n \rangle + \frac{n-1}{2}\cdot H & \text{if $n$ is odd,}\\
      \langle n\rangle + \langle ns_x\rangle + \frac{n-2}{2}\cdot H& \text{if
                                                                     $n$ is even}
    \end{cases}
  \]
  in $\GW(k(x))$.

  \begin{proof}
    Note that $\OO_{X,x}[z]/(z^n-s_x)$ is local and thus, by Remark
    \ref{rem:preimage-description}, the preimage contains exactly one point
    $y$. Furthermore, the
    local ring at $y$ is given by $\OO_{X,x}[z]/(z^n-s_x)$.

    By Lemma \ref{prop:etale-euler-class-change}, we only need to compute
    $\Tr_{k(y)/k(x)}(\langle 1\rangle)$ to prove the statement. But this is
    $\beta$ by Lemma \ref{lem:cyclic-trace} since $k(y) = k(x)[z]/(z^n-s_x)$.
  \end{proof}
\end{corollary}

\begin{rem}
  Note that the quadratic form $\langle ns_x\rangle$ in Corollary
  \ref{cor:etale-irreducible-contribution} does not depend on the
  chosen trivialisation of $L$ if $n$ is even. Indeed, if $s_x'$ is obtained
  by choosing a different trivialisation, then $s_x = \lambda^ns_x'$ for some
  $\lambda \in \OO_{X,x}^\times$. Thus, we have $\langle ns_x \rangle =
  \langle n\lambda^ns_x'\rangle = \langle ns_x'\rangle$ because $n$ is even.
\end{rem}

\begin{corollary}
  \label{cor:etale-linear-contribution}
  Let $x \in \critical(\rho)$ be a critical point, $t$ a local
  generator of $L$ around $x$ and let $s_x \in \OO_{X,x}$ with $s = s_xt^n$.
  Suppose $z^n-s_x$ factors into $n$ different linear factors. Then $f^{-1}(x)$ contains
  exactly $n$ points and we have
  \[
    \sum_{y \in f^{-1}(x)}\Tr_{k(y)/k(x)}(e_y(d(\rho\circ f))) = n \cdot
    e_x(d\rho) \in \GW(k(x)).
  \]

  \begin{proof}
    Note that $\OO_{X,x}[z]/(z^n-s_x) \cong \OO_{X,x}^n$ since $z^n-s_x$ is
    separable and thus, by
    Remark \ref{rem:preimage-description}, the preimage contains exactly $n$
    points. Furthermore, the local ring of every $y \in f^{-1}(x)$ is
    isomorphic to $\OO_{X,x}$.

    Thus, we get $\Tr_{k(y)/k(x)}(e_y(d(\rho\circ f))) = e_x(d\rho)$ for every
    $y \in f^{-1}(x)$ because the trace is trivial as $k(y) = k(x)$. Summing
    over all points yields the statement.
  \end{proof}
\end{corollary}

\begin{rem}
  \label{rem:etale-algebraically-closed-relation}
  If $k$ is algebraically closed, all closed points are rational points. In
  particular, we are always in the situation  of 
  Corollary \ref{cor:etale-linear-contribution}. In particular, for $k = \C$,
  Corollary \ref{cor:etale-linear-contribution} recovers the
  classical relation between the Milnor numbers $\sum_{y \in f^{-1}(x)} \mu_y(\rho\circ f) = n \cdot
  \mu_x(\rho)$ by taking the rank.
\end{rem}

The following proposition was explained to the author by Jesse Pajwani.

\begin{proposition}
  \label{prop:real-etale-part-no-product}
  Let $k = \R$. Then there exists an finite étale torsor $Y \to X$ under
  $\Z/2\Z$ between smooth, projective surfaces over $\R$ such that there is no
  $\beta \in \GW(\R)$ with $\chi(Y/\R) = \beta \cdot \chi(X/\R)$.

  \begin{proof}
    Let $X$ be the real Enriques surface described in \cite[Appendix~C, p.~227, example with
    $E_\R^{(1)} = S$ and $E_\R^{(2)} = S$]{DIK2000RealEnriquesSurfaces}. By
    \cite[Statement~9.2.1]{DIK2000RealEnriquesSurfaces}, $X$ has two torsors under
    $\Z/2\Z$, both of which are K3-surfaces. Let
    $f\colon Y \to X$ be one of these torsors. Since $Y$ is a K3-surface, we have $\chi^\topo(Y(\C)) = 24$ and hence
    $\chi^\topo(X(\C)) = \frac 12\cdot \chi^\topo(Y(\C)) = 12$.

    Let $X':=
    f(Y(\R)) \subset X(\R)$. By our choice of $X$ and by
    \cite[Statement~9.2.1]{DIK2000RealEnriquesSurfaces}, $X(\R)$ is homeomorphic to the
    disjoint union of two $2$-spheres $S^2\amalg S^2$ and $X' \cong S^2$. Hence
    we have $\chi^\topo(X(\R)) = 4$ and $\chi^\topo(X') = 2$. Since $Y(\R)$ is a
    double cover of $X'$, we have $\chi^\topo(Y(\R)) = 4$.

    Putting this together, we have by Remark \ref{rem:euler-char-rank-signature}
    \[
      \rank\chi(X/k) = 12,\quad \rank \chi(Y/k)= 24,\quad \sgn\chi(X/k) =4,\quad
      \text{and}\quad \sgn\chi(Y/k) = 4,
    \]
    where $\sgn\colon \GW(\R) \to \Z$ denotes the signature homomorphism.
    Suppose there exists $\beta \in \GW(\R)$ such that $\chi(Y/k) =
    \beta\cdot\chi(X/k)$. Since rank and signature are both multiplicative, we must have
    \[
      \rank\beta = 2\quad \text{and}\quad \sgn\beta = 1,
    \]
    but such an element does not exist in $\GW(\R)$, since we always have
    $\rank\beta \equiv \sgn\beta\mod 2$.
  \end{proof}
\end{proposition}

\begin{rem}
  \label{rem:general-etale-part-no-product}
  By Proposition \ref{prop:etale-euler-class-change}, the change of the Euler
  classes in the étale contribution depends on the covering behaviour at the
  individual points. This can yield quite different forms as illustrated in
  Corollary~\ref{cor:etale-irreducible-contribution} and Corollary~\ref{cor:etale-linear-contribution}.
  Furthermore, in general, there is no
  element $\beta \in \GW(k)$ such that
  \[
    \sum_{y \in f^{-1}(\critical(\rho))}\Tr_{k(y)/k}(e_y(d(\rho\circ f))) = \beta \cdot
    \sum_{x \in \critical(\rho)}\Tr_{k(x)/k}(e_x(d\rho)) \in \GW(k)
  \]
  by Proposition \ref{prop:real-etale-part-no-product}.
\end{rem}

\subsection{The Branched Contribution}
\label{sec:odd-contribution}
In this section, we assume that $f\colon Y \to X$ is an $n$-fold geometrically
cyclic covering branched at $Z$ with $n\ge 2$ invertible in $k$.

\begin{theorem}
  \label{thm:odd-fixed-contribution}
  Let $x \in Z$ be a critical point.
  Then if $n$ is odd, we have
  \[
    e_x(d(\rho\circ f)) = \mu_x(\rho\circ i)\cdot
    \frac{n-1}{2}\cdot H \in \GW(k(x)),
  \]
  where $\mu_x(\rho\circ i)$ is the Milnor number from Remark
  \ref{rem:milnor-no-rank}.

  If $n$ is even, choose a local trivialisation of $L$ around
  $x$ and let $s_x \in \OO_{X,x}$ be the element corresponding with
  the section $s$ under the chosen trivialisation. Let $t$ be a
  normalised parameter of $C$ at $\rho(x)$ and write $d\rho(dt) =
  \alpha ds_x$ in $\Omega^1_{X,x}\otimes k(x)$ with $\alpha \in k(x)^\times$.
  Then we have
  \[
    e_x(d(\rho\circ f)) = e_x(\rho\circ i)\cdot
    (\langle n\alpha\rangle + \frac{n-2}{2}\cdot H) \in \GW(k(x)).
  \]

  \begin{proof}
    First note that we can define $\alpha$ as in the statement of the
    theorem because by our assumption on $\rho$, the point $x$ is a
    critical point of $\rho\circ i$, but not a critical point of $\rho$.

    By Corollary \ref{cor:riemann-hurwitz-ss}, we can use the language of
    Scheja-Storch forms to prove the statement. If $n$ is even, we
    pick the local trivialisation of $L$ from the statement. If $n$ is
    odd, choose a local trivialisation
    of $L$ and let $s_x \in \OO_{X,x}$ be the element corresponding with the
    section $s\colon X \to L^{\otimes n}$ under the chosen trivialisation of
    $L$ and let
    $t$ be a normalised parameter of $C$ at $\rho(x)$.

    Let $t_0 = s_x, t_1, \dots, t_r$ be a set of parameters of $X$ at $x$. By our assumption on $\rho$, we
    have $d\rho(dt) \ne 0$ in $\Omega_{X/k,x}\otimes k(x)$ and
    $d(\rho\circ i)(dt) = 0$ in $\Omega_{Z/k,x}\otimes k(x)$.

    Let $A = \OO_{X,x}$. Hence, we have $\OO_{Z,x} = A/(t_0)$ and $\OO_{Y,x} =
    A[z]/(z^n-t_0)$. Thus, $t_1, \dots, t_r$ is a set of parameters of $Z$ at
    $x$ and $z, t_1, \dots, t_r$ is a set of parameters of $Y$ at $x$.

    We can express $d\rho(dt) = \sum_{i=0}^r s_idt_i \in
    \Omega_{X/k,x}$ with $s_i \in A$. Note that $s_0 =
    \alpha$ in $k(x)$ if $n$ is even. This implies
    \[
      d(\rho\circ i)(dt) = \sum_{i=1}^r s_idt_i \in \Omega_{Z/k,x}
      \quad \text{and}\quad
      d(\rho\circ f)(dt) = ns_0z^{n-1}dz + \sum_{i=1}^r s_idt_i \in \Omega_{Y/k,x}
    \]
    since $dt_0 = nz^{n-1}dz\in \Omega_{Y/k,x}$. Because $d(\rho\circ i)(dt)$
    vanishes, we can write $s_i = \sum_{j=0}^r a_{ij}t_j \in A$ for $i = 1, \dots
    r$. And thus, we can also write
    \[
      s_i = \sum_{j=1}^r a_{ij}t_j \in A/(t_0)
      \quad \text{and}\quad
      s_i = z_iz + \sum_{j=1}^r a_{ij}t_j \in A[z]/(z^n-t_0)
    \]
    where $z_i = a_{i0}z^{n-1}$. We also define $z_0 = ns_0
    z^{n-2}$. Let $J_Z = \frac{A/(t_0)}{(s_1, \dots,
      s_r)}$ and $J_Y = \frac{A[z]/(z^n-t_0)}{(z_0z, s_1, \dots, s_r)}$. Hence,
    we have
    \[
      J_Y = \frac{k(x)[z]}{(z^{n-1})}\otimes_{k(x)} J_Z
    \]
    since $ns_0$ is a unit in $A$.
    Denote the projection of $A$ to $J_Z$ and $J_Y$ by $p_Z$ and $p_Y$,
    respectively.

    We are now turning to the computation of the Scheja-Storch bilinear forms:
    over $Z$, we get the Scheja-Storch generator
    $e_Z = p_Z(\det((a_{ij})_{1\le i,j\le r})) \in J_Z$. Let $\hat e_Z
    =\det((a_{ij})_{1\le i,j\le r}) \in A$.  Over $Y$, we obtain
    the matrix
    \[
      \begin{pmatrix}
        z_0 & 0& \dots & 0\\
        z_1 & a_{11} & \dots & a_{1r}\\
        \vdots & \vdots & & \vdots\\
        z_r & a_{r1} & \dots & a_{rr}\\
      \end{pmatrix}
    \]
    as our generator matrix.
    This matrix has determinant $z_0\cdot \hat e_Z$. And thus, the Scheja-Storch
    generator over $Y$ is given by $p_Y(z_0\cdot \hat e_Z)$, which
    corresponds with $z_0 \otimes e_Z$ under the above identification.

    Let $\Tr_Z\colon J_Z \to k(x)$ be a $k(x)$-linear map sending $e_Z$ to
    $1$, and let $\varphi\colon k(x)[z]/(z^{n-1})\to k(x)$ be the map given by
    sending $z^i$ to $0$ for $0 \le i < n-2$ and $z_0 = ns_0z^{n-2}$ to $1$. Hence, the map $\Tr_Y
    := \varphi\otimes \Tr_Z\colon J_Y \to k(x)$ has the property that
    $\Tr_Y(e_Y) = 1$.

    Let $B_Z$ be the bilinear form $(a,b) \mapsto \Tr_Z(ab)$ on $J_Z$, and
    $B_Y$ be the bilinear form $(a,b) \mapsto \Tr_Y(ab)$ on
    $J_Y$. Furthermore, denote the bilinear form $(a,b) \mapsto
    \varphi(ab)$ on $k(x)[z]/(z^{n-1})$ by $B_\varphi$. Then our trace
    computation yields $B_Y = B_\varphi \cdot B_Z$ as bilinear
    forms. By Corollary \ref{cor:riemann-hurwitz-ss}, $B_Y$ and
    $B_Z$ compute the Euler classes; hence
    \begin{equation}
      \label{eq:odd-fixed-contribution:form-relation}
      e_x(d(\rho\circ f)) = e_x(\rho\circ i)\cdot
      [B_\varphi] \in \GW(k(x)).
    \end{equation}

    If $n$ is even, the form $B_\varphi$ equals $\langle
    n\alpha\rangle + \frac{n-2}{2}\cdot H$ in $\GW(k(x))$. Indeed, the elements $z^i$
    for $i = 0, \dots, n-2$ form a basis of $k(x)[z]/(z^{n-1})$, and
    the pairs $(z^i, z^{n-2-i})$ for $i = 0, \dots, \frac{n-2}{2}-1$
    yield hyperbolic summands of $B_\varphi$ in $\GW(k(x))$. The
    remaining basis element $z^{(n-2)/2}$ pairs with itself to
    $B_\varphi(z^{(n-2)/2},z^{(n-2)/2}) = \frac{1}{ns_0} = \frac{1}{n\alpha} \in
    k(x)$. Since we
    have $B_\varphi(z^i,z^j) = 0$ for all $i,j \in \{0, \dots, n-2\}$
    not of the above form, we get the desired description of
    $B_\varphi$. Hence,
    \eqref{eq:odd-fixed-contribution:form-relation} yields the theorem
    for $n$ even.

    If $n$ is odd, a similar argument yields that $B_\varphi$ is hyperbolic because
    $n-1$ is even and thus equals $\dim_{k(x)}(k(x)[z]/(z^{n-1}))/2\cdot H =
    \frac{n-1}{2}\cdot H$. Hence,
    \eqref{eq:odd-fixed-contribution:form-relation} yields the theorem
    for $n$ odd.
  \end{proof}
\end{theorem}

\begin{rem}
  \label{rem:algebraically-closed-branched-contribution}
  If $k$ is algebraically closed, Theorem \ref{thm:odd-fixed-contribution}
  reduces to the simplified relation between the Milnor numbers
  \[
    \mu_x(\rho\circ f) = (n-1)\mu_x(\rho\circ i)
  \]
  by taking the rank. This recovers the classical relation for $k =
  \C$.
\end{rem}

\begin{corollary}
  \label{cor:odd-euler-char-general}
  Let $X$ be equidimensional of dimension $r$. If $n$ is odd, then we have in the notation of Theorem \ref{thm:riemann-hurwitz}
  \begin{align*}
    (-1)^r\chi(Y/k) &= \sum_{y \in f^{-1}(\critical(\rho))}\pi_{Y\ast}i_{y\ast}
                      e_y(d(\rho\circ f)) + (-1)^{r-1} \chi(Z/k)\cdot
                      \frac{n-1}{2}\cdot H\\
    &\quad -nD(\rho)\cdot H
  \end{align*}
  in $\GW(k)$.

  If $n$ is even, choose an element $\alpha_x \in k(x)$ as in Theorem
  \ref{thm:odd-fixed-contribution} for each $x \in \critical(\rho\circ
  i)$. Then we have in the notation of Theorem \ref{thm:riemann-hurwitz}
  \begin{align*}
    (-1)^r\chi(Y/k) &= \sum_{y \in f^{-1}(\critical(\rho))}\pi_{Y\ast}i_{y\ast}
                      e_y(d(\rho\circ f)) + \sum_{y \in
                      \critical(\rho\circ i)}\pi_{Y\ast}i_{y\ast}
                      (e_y(d(\rho\circ i)) \langle (n\alpha_x\rangle -
                      \langle 1\rangle))\\
    &\quad + (-1)^{r-1} \chi(Z/k)\cdot
                      (\langle 1\rangle + \frac{n-2}{2}\cdot H) -nD(\rho)\cdot H
  \end{align*}
  in $\GW(k)$.

  \begin{proof}
    By Theorem \ref{thm:riemann-hurwitz} and Proposition
    \ref{prop:critical-decomp}, we have
    \begin{align*}
      (-1)^r\chi(Y/k)
      &= \sum_{y \in \critical(\rho\circ f)}\pi_{Y\ast}i_{y\ast}e_y(d(\rho\circ f)) -
                        D(\rho\circ f)H\\
      &= \sum_{y \in f^{-1}(\critical(\rho))}\pi_{Y\ast}i_{y\ast}e_y(d(\rho\circ f)) + \sum_{y \in
        \critical(\rho\circ i)}\pi_{Y\ast}i_{y\ast}e_y(d(\rho\circ f)) -
                        D(\rho\circ f)H.
    \end{align*}
    We start with the case $n$ is even.
    By Corollary \ref{cor:riemann-hurwitz-ss} and Theorem
    \ref{thm:odd-fixed-contribution}, we can rewrite the second summand and obtain
    \begin{align*}
      (-1)^r\chi(Y/k)
      &= \sum_{y \in f^{-1}(\critical(\rho))}\pi_{Y\ast}i_{y\ast}e_y(d(\rho\circ f)) + \sum_{y \in
        \critical(\rho\circ i)}\pi_{Y\ast}i_{y\ast}e_y(d(\rho\circ f)) -
        D(\rho\circ f)H\\
      &= \sum_{y \in f^{-1}(\critical(\rho))}\pi_{Y\ast}i_{y\ast}e_y(d(\rho\circ f)) +
        \sum_{y \in \critical(\rho\circ i)}\Tr_{k(y)/k}\left(\mu_y(\rho\circ i)\cdot
        (\langle n\alpha_y\rangle + \frac{n-2}{2}\cdot H)\right)\\
      &\quad - D(\rho\circ f)H\\
      &= \sum_{y \in f^{-1}(\critical(\rho))}\pi_{Y\ast}i_{y\ast}e_y(d(\rho\circ
        f)) +
        \sum_{y \in \critical(\rho\circ i)}\Tr_{k(y)/k}(e_y(d(\rho\circ
        i))(\langle n\alpha_y\rangle - \langle 1\rangle))\\
      &\quad +
        \sum_{y \in \critical(\rho\circ i)}\Tr_{k(y)/k}(e_y(d(\rho\circ i)))\cdot
        (\langle 1\rangle + \frac{n-2}{2}\cdot H)
       - D(\rho\circ f)H\\
      &= \sum_{y \in
        f^{-1}(\critical(\rho))}\pi_{Y\ast}i_{y\ast}e_y(d(\rho\circ f)) + \sum_{y \in \critical(\rho\circ i)}\Tr_{k(y)/k}(e_y(d(\rho\circ
        i))(\langle n\alpha_y\rangle - \langle 1\rangle))\\
      &\quad+
        ((-1)^{r-1}\chi(Z/k) + D(\rho\circ i)\cdot H)\cdot
        (\langle 1\rangle + \frac{n-2}{2}\cdot H)
       - D(\rho\circ f)H\\
    \end{align*}
    where we used Lemma \ref{lem:trace-quadratic-linear} for the third
    equality and that $Z$ has dimension $r-1$ by \cite[Zariski purity of the
    branch locus, \href{https://stacks.math.columbia.edu/tag/0BMB}{Tag~0BMB}]{stacks-project} for the last equality. If we
    rearrange this and note that $H\cdot \beta = \rank\beta \cdot H$ for $\beta
    \in \GW(k)$, we obtain
    \begin{align*}
      (-1)^r\chi(Y/k) &= \sum_{y \in f^{-1}(\critical(\rho))}\pi_{Y\ast}i_{y\ast}
                        e_y(d(\rho\circ f)) + \sum_{y \in \critical(\rho\circ i)}\Tr_{k(y)/k}(e_y(d(\rho\circ
        i))(\langle n\alpha_y\rangle - \langle 1\rangle))\\
                      &\quad + (-1)^{r-1} \chi(Z/k)\cdot
                        (\langle 1\rangle + \frac{n-1}{2}\cdot H) +((n-1)D(\rho\circ i) - D(\rho\circ f))H
    \end{align*}
    By construction of $D(\rho)$, there exists some $t \in C$ such that
    $\rho^{-1}(t)$, $(\rho\circ i)^{-1}(t)$, and $(\rho\circ f)^{-1}(t)$ are
    all smooth. Hence, we have
    \begin{align*}
      & nD(\rho) + (n-1)D(\rho\circ i) - D(\rho\circ f)\\
      = & (-1)^{r-1}\frac 12 \chi^\topo(C) \cdot (n\cdot
          \chi^\topo(\rho^{-1}(t)) - (n-1)\chi^\topo((\rho\circ i)^{-1}(t)) -
          \chi^\topo((\rho\circ f)^{-1}(t)))\\
      = & 0
    \end{align*}
    by the Riemann-Hurwitz formula for the topological Euler
    characteristic. In particular, we have $(n-1)D(\rho\circ i) - D(\rho\circ
    f) = -nD(\rho)$, which yields the claim.

    If $n$ is odd, the proof is the same but simpler, since we then have
    \[
      \pi_{Y\ast}i_{y\ast}e_y(d(\rho\circ f)) = \Tr_{k(y)/k}\left(\mu_y(\rho\circ i)\cdot
        \frac{n-1}{2}\cdot H\right) = \Tr_{k(y)/k}(\mu_y(\rho\circ i))\cdot
        \frac{n-1}{2}\cdot H
    \]
    for $y \in \critical(\rho\circ i)$.
  \end{proof}
\end{corollary}

\begin{rem}
  \label{rem:recover-classical-formula}
  If $k = \C$, Corollary \ref{cor:odd-euler-char-general} recovers the classical formula
  \[
    \chi^\topo(Y(\C)) = n\chi^\topo(X(\C)) - (n-1)\chi^\topo(Z(\C))
  \]
  from topology by taking ranks: We start when $n$ is even. The rank of $ \chi(Z/k)\cdot
  (\langle 1\rangle + \frac{n-2}{2}\cdot H)$ is $(n-1)\chi^\topo(Z(\C))$. By Remark
  \ref{rem:etale-algebraically-closed-relation} and Theorem \ref{thm:riemann-hurwitz}, we have
  \[
    \rank\left(\sum_{y \in f^{-1}(\critical(\rho))}\pi_{Y\ast}i_{y\ast}
      e_y(d(\rho\circ f)) -nD(\rho)H\right) = n \cdot \sum_{x \in
      \critical(\rho)}\mu_x(\rho) -2nD(\rho) = (-1)^r\chi^\topo(X(\C))
  \]
  and the rank of 
  \[
    \sum_{y \in
      \critical(\rho\circ i)}\pi_{Y\ast}i_{y\ast}
    (e_y(d(\rho\circ \circ i)) \langle (n\alpha_x\rangle -
    \langle 1\rangle))
  \]
  is zero. By combining these computations, we recover the classical formula.
\end{rem}

\begin{corollary}
  \label{cor:odd-euler-char-irred}
  Let $X$ be equidimensional. Assume that $n$ is odd and that the polynomial $z^n - s_x$ from
  Remark \ref{rem:preimage-description} is irreducible in $k(x)[z]$ for all $x
  \in \critical(\rho)$. Then we have
  \[
    \chi(Y/k) = (\langle n\rangle + \frac{n-1}{2}H)\cdot \chi(X/k) -
    \chi(Z/k)\cdot \frac{n-1}{2}\cdot H \in \GW(k).
  \]

  \begin{proof}
    Combine Corollary \ref{cor:odd-euler-char-general} with Corollary
    \ref{cor:etale-irreducible-contribution} and Corollary
    \ref{cor:riemann-hurwitz-ss}.
  \end{proof}
\end{corollary}

\begin{corollary}
  \label{cor:odd-euler-char-splitting}
  Let $X$ be equidimensional. Assume that $n$ is odd and that the polynomial $z^n - s_x \in k(x)[z]$ from
  Remark \ref{rem:preimage-description} factors into $n$ different linear factors for all $x
  \in \critical(\rho)$. Then we have
  \[
    \chi(Y/k) = n\cdot  \chi(X/k) -
    \chi(Z/k)\cdot \frac{n-1}{2}\cdot H\in \GW(k).
  \]

  \begin{proof}
    Combine Corollary \ref{cor:odd-euler-char-general} with Corollary
    \ref{cor:etale-linear-contribution} and Corollary
    \ref{cor:riemann-hurwitz-ss}.
  \end{proof}
\end{corollary}

\begin{rem}
  As in Remark~\ref{rem:general-etale-part-no-product}, the change of the Euler class in
  Theorem~\ref{thm:odd-fixed-contribution} depends on the critical point when
  $n$ is even. This gives little hope that, in general, there is an
  element $\beta\in \GW(k)$ such that
  \[
    \sum_{x \in f^{-1}(\critical(\rho\circ i))}\Tr_{k(x)/k}(e_x(d(\rho\circ f))) = \beta \cdot
    \sum_{x \in \critical(\rho\circ i)}\Tr_{k(x)/k}(e_x(d\rho)) \in \GW(k).
  \]
\end{rem}

\section{Application: Quadratic Euler Characteristic of Double Coverings of
  $\PP^2$}
\label{sec:application-p2}
In this section, we consider double coverings of $\PP^2$. That is, we consider
branched coverings of the form
\[
  \begin{tikzcd}
    & X \ar[r]\ar[d, "\varphi"] & \OO(n)\ar[d, "m_2"]\\
    C \ar[r, "i"] & \PP^2 \ar[r, "s"] & \OO(2n)
  \end{tikzcd}
\]
where $n\ge 1$ is invertible in $k$ and we assume that $X$ has a rational point
not on $C$. If $n = 3$, then $X$ is a K3-surface by \cite[Chapter 1, Example
1.3~(iv)]{Huybrechts2016K3}.

\begin{lemma}
  \label{lem:k3:rational-point-splitting}
  If $y \in X(k)$ is a rational point not on $C$, then $\varphi^{-1}(\varphi(y)) \subset X$
  consists of two rational points.

  \begin{proof}
    Let $s_y \in \OO_{\PP^2,\varphi(y)}$ be an element corresponding with $s$ under a
    local trivialisation of $\OO(2n)$ around $\varphi(y)$. Then $\varphi$ induces the following commutative
    diagram:
    \[
      \begin{tikzcd}
        \OO_{\PP^2,\varphi(y)} \ar[r, hook] \ar[d, "\varphi(y)"] &
        \OO_{\PP^2,\varphi(y)}[t]/(t^2-s_y)\ar[d]\\
        k \ar[r, hook] & k[t]/(t^2-s_y).
      \end{tikzcd}
    \]
    where we note that $\varphi(y)$ is also a rational point and the map $\varphi(y)$ in the
    diagram denotes taking the residue field.
    We can identify $\OO_{X,\varphi^{-1}(\varphi(y))} = \OO_{\PP^2,\varphi(y)}[t]/(t^2-s_y)$ and under this
    identification, the rational point $y$ corresponds with a retract of the
    inclusion $k \hookrightarrow k[t]/(t^2-s_y)$. Since $s_y$ is invertible in
    $\OO_{\PP^2,\varphi(y)}$, we thus get that
    $k[t]/(t^2-s_y) \cong k\times k$ as a ring for degree reasons, and thus
    $\varphi^{-1}(\varphi(y)) \subset X$ consists of two rational points.
  \end{proof}
\end{lemma}

Take such a point $y \in X(k)$ and consider its image $x:= \varphi(y) \in
\PP^2$. By choosing coordinates on $\PP^2$ correctly, we can assume that $x
= [0:0:1] \in \PP^2$. In these coordinates, we can represent the section $s$ by a homogeneous polynomial $F \in k[X_0,X_1,X_2]$ of degree $2n$. Consider the rational map $\rho\colon \PP^2
-\to \PP^1; [X_0:X_1:X_2] \mapsto [X_0:X_1]$. This map is defined on
$\PP^2\setminus \{x\}$ and it defines a well-defined map $\rho\circ i\colon C
\to \PP^1$.

The rational map $\rho$ induces a morphism $\tilde\rho \colon \Bl_{x}\PP^2
\to \PP^1$ and
\[
  \begin{tikzcd}
    & \Bl_{\varphi^{-1}(x)}X \ar[r]\ar[d, "\tilde \varphi"] & \OO(n)\ar[d, "m_2"]\\
    C \ar[r, "i"] & \Bl_{x}\PP^2 \ar[r, "s"] & \OO(2n)
  \end{tikzcd}
\]
is again a geometrically cyclic branched covering. In the following, we shall compute the
quadratic Euler characteristic of $\Bl_{\varphi^{-1}(x)}X$.

\begin{rem}
  \label{rem:k3:blow-up-reduction}
  We can deduce $\chi(X/k)$ from $\chi(\Bl_{\varphi^{-1}(x)}/k)$ using the blow-up
  formula in Remark \ref{rem:euler-char-properties}: the set $\varphi^{-1}(x)$
  consists of two rational points, and hence $\chi(\varphi^{-1}(x)/k) =
  2\langle 1\rangle$. In particular, we get
  \begin{align*}
    \chi(X/k)
    &= \chi(\Bl_{\varphi^{-1}(x)}X/k) + (\langle 1\rangle -
      \chi(\PP^1/k))\cdot \chi(\varphi^{-1}(x)/k)\\
    &= \chi(\Bl_{\varphi^{-1}(x)}X/k) -
    2\langle -1\rangle \in \GW(k).
  \end{align*}
\end{rem}

\begin{lemma}
  \label{lem:k3:critical-P2}
  The induced map $\tilde\rho \colon \Bl_{x}\PP^2 \to \PP^1$ has no
  critical points.

  \begin{proof}
    Over $D_{\PP^2}(X_0)$ and $D_{\PP^2}(X_1)$, the projection $\pi\colon \Bl_{x}\PP^2 \to
    \PP^2$ is an isomorphism. Furthermore for $i = 0,1$, the morphism
    $\rho\colon D_{\PP^2}(X_i) \to \PP^1$ induces an inclusion $d\rho\colon
    \rho^\ast\Omega^1_{\PP^1}|_{D_{\PP^1}(X_i)} \to \Omega^1_{\PP^2}|_{D_{\PP^2}(X_i)}$. Hence,
    $\rho$ has no critical points on $D_{\PP^2}(X_0)$ and $D_{\PP^2}(X_1)$. Since these open
    subsets cover $\PP^2\setminus\{x\}$, the morphism $\tilde\rho\colon \Bl_{x}\PP^2 \to \PP^1$ has no critical
    points outside of the exceptional divisor.

    Using the isomorphism $D_{\PP^2}(X_2) \cong \A^2$, we can model $\pi^{-1}(D(X_2))$,
    which includes the entire exceptional divisor, as $\Bl_{0}\A^2$. Under this
    identification, $\rho$ becomes the morphism induced by the rational morphism
    $\rho\colon \A^2 -\to \PP^1, (x,y) \mapsto [x:y]$.

    By \cite[Chapter II, Example 7.12.1]{HartshorneAG}, we can model $\Bl_0\A^2$
    as $\Proj(k[x,y][T_x,T_y]/(xT_y-yT_x))$ where $x$ and $y$ have degree $0$,
    and $T_x$ and $T_y$ have degree $1$. In this model, the morphism $\tilde\rho\colon
    \Bl_0\A^2\to \PP^1$ is given by $[T_x:T_y]\mapsto [T_x:T_y]$.

    In particular, $\tilde\rho^{-1}(D_{\PP^1}(X_0))= \{T_x \ne 0\}$ and
    $\tilde\rho^{-1}(D_{\PP^1}(X_0)) = \{T_y \ne 0\}$. Thus, over $D_{\PP^2}(X_0)$, we get that $d\tilde\rho
    \colon \tilde\rho^\ast\Omega^1_{\PP^1/k}|_{D_{\PP^2}(X_0)} \to \Omega^1_{\Bl_0\A^2/k}|_{T_x\ne 0}$ is
    given by $d(X_1/X_0) \mapsto d(T_y/T_x)$, and this is an inclusion. Hence $d\tilde\rho$
    vanishes nowhere on $D_{\PP^1}(X_0)$. Since we can repeat the same argument for
    $D_{\PP^1}(X_1)$, we get the claim.
  \end{proof}
\end{lemma}

\begin{corollary}
  \label{cor:k3:cover-critical-points}
  We have $\critical(\tilde\rho\circ \tilde\varphi) = \critical(\tilde\rho\circ
  i) = \critical(\rho\circ i)$.

  \begin{proof}
    Combine Lemma \ref{lem:k3:critical-P2} and Proposition
    \ref{prop:critical-decomp} and notice that $\rho\circ i = \tilde\rho\circ i$.
  \end{proof}
\end{corollary}

\begin{lemma}
  \label{lem:curve-critical-points}
  We have
  \[
    \critical(\rho\circ i) = \left\{\frac{\partial F}{\partial X_2} =
      0\right\}\subset C
  \]
  and this set is a finite collection of closed points in $C$.

  \begin{proof}
    We can cover $C$ with $D_{\PP^2}(X_i) \subset \PP^2$ for $i = 0,1$. Let $j =
    1-i$. Then we have on $D_{\PP^2}(X_i)$ that
    \[
      \Omega^1_{C/k}|_{D_{\PP^2}(X_i)} \cong \frac{\OO_{C\cap D_{\PP^2}(X_i)/k}dx_j \oplus
        \OO_{C\cap D_{\PP^2}(X_i)/k}dx_2}{\frac{\partial f}{\partial x_j}dx_j +\frac{\partial f}{\partial x_2}dx_2},
    \]
    where $x_j = X_j/X_i$ and $x_2 = X_2/X_i$ are the coordinate of $D(X_i)
    \cong \Spec k[x_j,x_2]$ and $f$ is the dehomogenisation of $F$ with respect
    to $X_i$.

    Since we have $\rho^{-1}(D_{\PP^1}(X_\nu)) = D_{\PP^2}(X_\nu)$ for $\nu = 1,2$, we get over
    $D_{\PP^1}(X_i)\subset \PP^1$ that $d\rho(dx_j) = dx_j$. Since $C$ is smooth, we
    have that for every closed point at least one of
    $\frac{\partial f}{\partial x_j}$ and $\frac{\partial f}{\partial x_2}$ has
    to be invertible at this point. In particular, $dx_j \in
    \Omega^1_{C/k}|_{D_{\PP^2}(X_i)}$ vanishes if and only if $\frac{\partial
      f}{\partial x_j}$ is invertible and $\frac{\partial f}{\partial x_2}$ is
    non-invertible. Now $\frac{\partial f}{\partial x_2}$ being non-invertible
    implies that $\frac{\partial f}{\partial x_j}$ is invertible because $C$ is
    smooth. Furthermore, since $\frac{\partial f}{\partial
      x_2}$ is the dehomogenisation with respect to $X_i$ of $\frac{\partial
      F}{\partial X_2}$, we get the claim.

    Since $2n$ is invertible in $k$, the map $\rho\circ i$ is separated and
    generically unramified. Hence, $\rho\circ i$ only has finitely many critical points.
  \end{proof}
\end{lemma}

\begin{lemma}
  \label{lem:k3:curve-local-euler-class}
  Let $y \in \critical(\tilde \rho\circ i)$ and choose $j \in \{0,1\}$ with $y \in
  D(X_{1-j})$. Let $f \in k[x_{j},x_2]$ be the dehomogenisation of $F$ with
  respect to
  $X_{1-j}$. Let $t_y \in \OO_{C,y}$ be a local parameter and write
  $\frac{\partial f}{\partial x_2} = ut_y^m$ in $\OO_{C,y}$ with $m \ge 1$ and
  $u \in \OO_{C,y}^\times$. Suppose that $m$ is invertible in $k$. Then we can write $\frac{\partial^2 f}{\partial
    x_2^2} = m \alpha t_y^{m-1}$ for some $\alpha \in \OO_{C,y}^\times$ and we
  have
  \[
    e_y(\rho\circ i) =
    \begin{cases}
      \langle -\alpha \cdot  \frac{\partial f}{\partial x_{j}}\rangle +
      \frac{m-1}{2}\cdot H,& \text{for $m$ odd}\\
      \frac{m}{2} \cdot H, & \text{for $m$ even}
    \end{cases}
  \]
  in $\GW(k(y))$.

  In particular, if $\frac{\partial f}{\partial x_2}$ is a local
  parameter of $\OO_{C,y}$, then $\frac{\partial^2 f}{\partial
    x_2^2} \in \OO_{C,y}^\times$ and we have
  \[
    e_y(\rho\circ i) =
      \langle -\frac{\partial^2 f}{\partial
        x_2^2} \cdot  \frac{\partial f}{\partial x_{j}}\rangle \in \GW(k(y)).
  \]

  \begin{proof}
    We use Corollary \ref{cor:riemann-hurwitz-ss} to compute $e_y(\rho\circ
    i)$. As in the proof of Lemma~\ref{lem:curve-critical-points}, we have
    \[
      \Omega^1_{C/k}|_{D_{\PP^2}(X_{1-j})} \cong \frac{\OO_{C\cap D_{\PP^2}(X_{1-j})/k}dx_j \oplus
        \OO_{C\cap D_{\PP^2}(X_{1-j})/k}dx_2}{\frac{\partial f}{\partial x_j}dx_j +\frac{\partial f}{\partial x_2}dx_2}
    \]
    and we need to compute the local Euler class with respect to the section
    $dx_j$. Note that $df = \frac{\partial f}{\partial x_j}dx_j +\frac{\partial
      f}{\partial x_2}dx_2$ and similarly $d\frac{\partial f}{\partial x_2} =
    \frac{\partial^2 f}{\partial x_j\partial x_2}dx_j +\frac{\partial^2
      f}{\partial x_2^2}dx_2$.

    There are $\alpha', \beta \in \OO_{\PP^2,y}^\times$
    such that
    \[
      dt_y = \frac{\beta}{u}dx_j + \frac{\alpha'}{u}dx_2
    \]
    in $\Omega^1_{\PP^2,y}$.
    Since $\frac{\partial f}{\partial x_2} = ut_y^m$, we also have
    \[
      umt_y^{m-1}dt_y = d\frac{\partial f}{\partial x_2} =
      \frac{\partial^2 f}{\partial x_j\partial x_2}dx_j +\frac{\partial^2
        f}{\partial^2 x_2}dx_2
    \]
    in $\Omega^1_{C,y}$ and hence there exists an $r \in \OO_{\PP^2,y}$ such
    that
    \[
      umt_y^{m-1}dt_y + rdf = \frac{\partial^2 f}{\partial x_j\partial x_2}dx_j +\frac{\partial^2
        f}{\partial x_2^2}dx_2
    \]
    in $\Omega^1_{\PP^2,y}$.
    Expanding $df$ and using that $dx_j,dx_2$ is an $\OO_{\PP^2,y}$-basis of
    $\Omega^1_{\PP^2,y}$, we get
    \[
      \frac{\partial^2f}{\partial x_2^2} = mt_y^{m-1}\alpha' + r\frac{\partial
        f}{\partial x_2} = mt_y^{m-1}\alpha' + rut_y^m =
      m\underset{=:\tilde\alpha}{\underbrace{(\alpha' + rut_y)t_y^{m-1}}} =
      m\tilde\alpha t_y^{m-1}
    \]
    in $\OO_{\PP^2,y}$.
    
    Let
    \[
      A =
      \begin{pmatrix}
        \frac{\partial f}{\partial x_j} & \beta/u\\
        \frac{\partial f}{\partial x_2} & \alpha'/u
      \end{pmatrix}.
    \]
    Then $A$ is the change of basis matrix from the $\OO_{\PP^2,y}$-basis $(df, dt_y)$ to the
    basis $(dx_j, dx_2)$ for $\Omega^1_{\PP^2,y}$ and hence
    invertible. In particular because $\frac{\partial f}{\partial x_2}$ vanishes
    in $k(y)$, we get $\alpha' \in \OO_{\PP^2,y}^\times$ and thus
    $\tilde \alpha \in \OO_{\PP^2,y}^\times$. Hence we can indeed write $\frac{\partial^2 f}{\partial
      x_2^2} = m \tilde \alpha t_y^{m-1}$ in $\OO_{C,y}$ with $\tilde \alpha \in \OO_{\PP^2,y}^\times$ and since this decomposition is
    unique, we have $\tilde \alpha = \alpha$ in $\OO_{C,y}$ and $\alpha'$ and $\alpha$ have the same residue class in $k(y)$.

    Furthermore, we can
    write
    \[
      dx_j = (\det A)^{-1} \cdot (\frac{\alpha'}{u}\cdot df - \frac{\partial f}{\partial x_2}dt_y)
    \]
    in $\Omega^1_{\PP^2,y}$.
    Thus, we get the Scheja-Storch generator $-(\det A)^{-1}ut_y^{m-1} \in J = \OO_{C,y}/(ut_y^m)$. The linear form $J \to
    k(y)$ sending
    $t_y^i$ to $0$ for $i < m-1$ and $t^{m-1}_y$ to $-\frac{\det A}{u} =
    -\frac{\alpha'}{u^2} \cdot  \frac{\partial f}{\partial x_{j}} = -\frac{\alpha}{u^2} \cdot  \frac{\partial f}{\partial x_{j}} \in k(y)$
    induces the Scheja-Storch form. If $m$ is even, the $t_y^i$ for $i = 0,
    \dots, m-1$ pair to hyperbolic forms such that we get the result in that
    case. If $m$ is odd, $t_y^{{m-1}/2}$ pairs with itself to $-\frac{\alpha}{u^2} \cdot
    \frac{\partial f}{\partial x_{j}}$.  Since $\langle -\frac{\alpha}{u^2} \cdot
    \frac{\partial f}{\partial x_{j}}\rangle = \langle -\alpha \cdot
    \frac{\partial f}{\partial x_{j}}\rangle \in \GW(k(y))$, this yields the claim.
  \end{proof}
\end{lemma}

\begin{proposition}
  \label{prop:k3:local-euler-k3}
  Let $y \in \critical(\rho\circ i)$ and choose $j \in \{0,1\}$ with $y \in
  D(X_{1-j})$. Let $f \in k[x_{j},x_2]$ be the dehomogenisation of $F$ with
  respect to
  $X_{1-j}$. Let $t_y \in \OO_{C,y}$ be a local parameter and write
  $\frac{\partial f}{\partial x_2} = ut_y^m$ in $\OO_{C,y}$ with $m \ge 1$ and
  $u \in \OO_{C,y}^\times$. Suppose that $m$ is invertible in $k$. Then we can write $\frac{\partial^2 f}{\partial
    x_2^2} = m \alpha t_y^{m-1}$ for some $\alpha \in \OO_{C,y}^\times$ and we
  have
  \[
    e_y(\tilde \rho\circ \tilde \varphi) =
    \begin{cases}
      \langle -2\alpha \rangle +
      \frac{m-1}{2}\cdot H& \text{for $m$ odd}\\
      \frac{m}{2} \cdot H & \text{for $m$ even}
    \end{cases}
  \]
  in $\GW(k(y))$.

  \begin{proof}
    In $\Omega^1_{\PP^2,y}\otimes k(y)$, we have $df = \frac{\partial
      f}{\partial x_j}dx_j$ because $\frac{\partial f}{\partial x_2} = 0$ in
    $k(y)$. In particular, we have
    \[
      dx_j = \left(\frac{\partial f}{\partial x_j}\right)^{-1}df.
    \]
    The result now follows by combining Lemma
    \ref{lem:k3:curve-local-euler-class} and Theorem \ref{thm:odd-fixed-contribution}.
  \end{proof}
\end{proposition}

\begin{corollary}
  \label{cor:k3:local-euler-k3-nice-local-gen}
  In the situation of Proposition \ref{prop:k3:local-euler-k3}, assume that
  $\frac{\partial f}{\partial x_2}$ is a local generator of $\OO_{C,y}$ (or
  equivalently assume that $m = 1$). Then
  \[
    e_y(\tilde \rho\circ \tilde\varphi) = \langle -2\cdot \frac{\partial^2 f}{\partial
      x_2^2}\rangle \in \GW(k(y)).
  \]

  \begin{proof}
    Note that under the assumption, we can take $t_y = \frac{\partial f}{\partial
      x_2}$ and $u = 1$ in Proposition \ref{prop:k3:local-euler-k3}.
  \end{proof}
\end{corollary}

\begin{rem}
  Even in the nice case where all critical points satisfy the additional
  assumption of Corollary \ref{cor:k3:local-euler-k3-nice-local-gen}, and after
  taking traces and adjusting something, the traces are not related to the traces
  of the critical points of the curve
  $\frac{\partial F}{\partial X_2}$. Indeed, by the proof of Lemma
  \ref{lem:curve-critical-points}, the critical points of the induced map to
  $\PP^1$ are the points where $\frac{\partial^2 F}{\partial^2 X_2^2}$ and
  $\frac{\partial F}{\partial X_2}$ are both zero, but in Corollary
  \ref{cor:k3:local-euler-k3-nice-local-gen} we are looking at the points where
  $F$ and $\frac{\partial F}{\partial X_2}$ are zero. These two sets don't agree
  in general.
\end{rem}

\begin{theorem}
  \label{thm:k3:euler-char}
  For every $y \in \critical(\rho\circ i)$, choose $\alpha_y \in \OO_{C,y}$ and
  $m_y \ge 1$ as in
  Lemma \ref{lem:k3:curve-local-euler-class}. Assume that $m_y$ is
  invertible in $k$ for all $y\in \critical(\rho\circ i)$.
  Define
  \[
    \beta = \sum_{\substack{y \in \critical(\rho\circ i)\\m_y\text{ odd}}}
    \Tr_{k(y)/k}(\langle -2\alpha_y\rangle) \in \GW(k).
  \]
  Then $\rank\beta$ is even and
  \[
    \chi(X/k) = 2\langle1\rangle + \beta + ((2n-1)(n-1) + 1 -\frac 12 \rank
    \beta)\cdot H \in \GW(k).
  \]
  \begin{proof}
    By Remark \ref{rem:k3:blow-up-reduction}
    \[
      \chi(X/k) = \chi(\Bl_{\varphi^{-1}(x)}X/k) -
      2\langle -1 \rangle = 2\langle 1\rangle + \chi(\Bl_{\varphi^{-1}(x)}X/k) - 2H.
    \]
    We can compute using Remark \ref{rem:euler-char-properties}
    \[
      \chi(\Bl_x\PP^2/k) = \chi(\PP^2/k) - \chi(\Spec k/k) + \chi(\PP^1/k) = 2H
    \]
    and by the genus degree formula, $C$ has genus $(2n-1)(n-1)$. Hence, the classical covering formula from topology yields
    \begin{align*}
      \rank \chi(\Bl_{\varphi^{-1}(x)}X/k)
      &= 2\rank\chi(\Bl_{x}\PP^2/k) - \rank\chi(C/k)
      = 2\cdot (4 - (1-
      (2n-1)(n-1)))\\
      &= 2\cdot (3 + (2n-1)(n-1)).
    \end{align*}
    By Corollary \ref{cor:riemann-hurwitz-ss}, we have for some $r \in \Z$
    \begin{align*}
      \chi(\Bl_{\varphi^{-1}(x)}X/k)
      &= \sum_{y\in \critical(\tilde \rho\circ \tilde\varphi)}\Tr_{k(y)/k}e_y(\rho\circ f) + rH\\
      &= \sum_{y\in \critical(\rho\circ i)}\Tr_{k(y)/k}e_y(\rho\circ i) + rH\\
      &= \sum_{\substack{y\in \critical(\rho\circ i)\\m_y\text{
      odd}}}\Tr_{k(y)/k}(\langle -2\alpha_y\rangle + \frac{m_y-1}{2}H) +
      \sum_{\substack{y\in \critical(\rho\circ i)\\m_y\text{
      even}}}\Tr_{k(y)/k}(\frac{m_y}{2}H) + rH\\
      &= \sum_{\substack{y\in \critical(\rho\circ i)\\m_y\text{
      odd}}}\Tr_{k(y)/k}(\langle -2\alpha_y\rangle) +
      \sum_{\substack{y\in \critical(\rho\circ i)\\m_y\text{
      odd}}}\Tr_{k(y)/k}(\frac{m_y-1}{2}\langle 1\rangle)H\\
      &\quad +
      \sum_{\substack{y\in \critical(\rho\circ i)\\m_y\text{
      even}}}\Tr_{k(y)/k}(\frac{m_y}{2}\langle 1\rangle)H + rH\\
      &= \beta +
      (\sum_{\substack{y\in \critical(\rho\circ i)\\m_y\text{
      odd}}}\Tr_{k(y)/k}(\frac{m_y-1}{2}\langle 1\rangle) +
      \sum_{\substack{y\critical(\in \rho\circ i)\\m_y\text{
      even}}}\Tr_{k(y)/k}(\frac{m_y}{2}\langle 1\rangle) + r)H\\
    \end{align*}
    Here, we used Corollary \ref{cor:k3:cover-critical-points} for the second
    equation, Proposition \ref{prop:k3:local-euler-k3} for the third equation,
    and Lemma \ref{lem:trace-quadratic-linear} for the forth equation. In
    particular, we get
    \[
      \chi(\Bl_{\varphi^{-1}(x)}X/k) = \beta + r'H
    \]
    for some $r' \in \Z$. Since $\rank\chi(\Bl_{\varphi^{-1}(x)}X/k)$ is even,
    this implies that $\rank\beta$ is even. By comparing ranks, we get
    \[
      r' = (2n-1)(n-1) + 3 - \frac 12\rank \beta.
    \]
    Combining all formulae, we get the claim.
  \end{proof}
\end{theorem}

\begin{example}
  \label{ex:k3:final-example}
  Consider the setting of this section induced by the polynomial $F = aX_0^{2n} + bX_1^{2n} +
  X_2^{2n}$ with $a,b \in k^\times$ and assume that $2n$ and $2n-1$ are
  invertible in $k$. Then $X$ has two rational points outside of $C$ over the rational point $x
  := [0:0:1]
  \in \PP^2$. In particular, we can use the results in this section directly to
  compute $\chi(X/k)$. We have
  \[
    \frac{\partial F}{\partial X_2} = 2nX_2^{2n-1} \quad \text{and}\quad
    \frac{\partial^2 F}{\partial X_2^2} = 2n\cdot (2n-1)\cdot X_2^{2n-2}.
  \]
  Hence, the critical points of $C = V(F)$ are all points with $X_2 = 0$, these are
  the points in the vanishing set $\{X_2 = 0, aX_0^{2n}+bX_1^{2n} = 0\}$. All
  points in this vanishing set lie in the intersection $D_{\PP^2}(X_0) \cap D_{\PP^2}(X_1)$ and
  hence, we can work in $D_{\PP^2}(X_0)$. There, the equation $aX_0^{2n}+bX_1^{2n} = 0$
  dehomogenises to
  \[
    a+bx_1^{2n} = 0 \Leftrightarrow \frac{a}{b} + x_1^{2n} = 0.
  \]
  Hence, the zero locus consists of at most $2n$ distinct closed points. For
  each closed point $y \in \{X_2 = 0, aX_0^{2n}+bX_1^{2n} = 0\}$, we can pick
  the local parameter $t_y = x_2$. This yields $u = 2n$, $m = 2n-1$, and $\alpha
  = 2n$ in Proposition \ref{prop:k3:local-euler-k3}. In particular, the local
  Euler class is given by
  \[
    e_y(\tilde\rho\circ \tilde\varphi) = \langle -4n\rangle + \frac{2n-2}{2}H \in \GW(k(y)).
  \]
  If $a = b = 1$, we have
  \[
    \prod_{1\le d|4n}\Phi_d(x_2) = x_2^{4n}-1 = (x_2^{2n}+1)(x_2^{2n}-1) =
    (x_2^{2n}+1)\cdot \prod_{1\le d|2n}\Phi_d(x_2)
  \]
  where $\Phi_d$ is the $d$-th cyclotomic polynomial, whose degree we denote by $\phi(d)$. Hence
  \[
    (x_2^{2n}+1) = \prod_{\substack{1\le d|4n\\ d\nmid 2n}}\Phi_{d}(x_2) =
    \prod_{\substack{1\le d|n\\ 2d\nmid n}}\Phi_{4d}(x_2).
  \]
  Since $4n$ is invertible in $k$, the polynomials $\Phi_{4d}$ are either irreducible
  in $k$ or they factor into $\phi(4d)$ distinct linear factors.
  Let $d\ge 1$ divide $n$. If $\Phi_{4d}$ is irreducible, then this factor corresponds with a
  unique closed point $x_{4d} \in C$. If $\Phi_{4d}$ factors into $\phi(4d)$ linear
  factors, $\Phi_{4d}$ corresponds with $\phi(4d)$ rational points. Hence, we get using Lemma~\ref{lem:trace-quadratic-linear} that the total contribution of the critical points of
  $C$ is
  \[
    \beta := (\langle -4n\rangle +
    \frac{2n-2}{2}H)\cdot\left(\sum_{{\substack{1\le d|n; 2d\nmid n\\ \Phi_{4d}
            \text{ irreducible}}}} \Tr_{k[\zeta_{4d}]/k}(\langle 1\rangle) +
      \sum_{{\substack{1\le d|n; 2d\nmid n\\ \Phi_{4d}
            \text{ factors}}}} \phi(4d)\cdot \langle 1\rangle\right) \in \GW(k).
  \]
  Also, note that $k$ contains a square root of $-1$ if any of the $\Phi_{4d}$
  factors; and hence, we have $2\langle 1\rangle = H$ in this case.

  Now, Theorem \ref{thm:k3:euler-char} yields that $\rank\beta$ is even and
  \[
    \chi(X/k) = 2\langle 1\rangle + \beta + ((2n-1)(n-1) + 1 -\frac 12\rank\beta)H \in \GW(k).
  \]

  For $k = \Q$ and $n = 3$, $X$ is a K3-surface. In order to compute
  $\chi(X/k)$, we need to compute $\Tr_{\Q[i]/\Q}(\langle 1\rangle)$ and
  $\Tr_{\Q[\zeta_{12}]/\Q}(\langle 1\rangle)$. A straightforward computation
  shows that $\Tr_{\Q[i]/\Q}(\langle 1\rangle) = H$. In order to compute
  $\Tr_{\Q[\zeta_{12}]/\Q}(\langle 1\rangle)$, note that $\Q[\zeta_{12}] =
  \Q[i,\zeta_3]$. Now one can compute the field traces
  \[
    \Tr_{\Q[\zeta_{12}]}(1) = 4, \quad \Tr_{\Q[\zeta_{12}]}(\zeta_3) = -2, \quad
    \Tr_{\Q[\zeta_{12}]}(i) = 0, \quad \text{and}\quad \Tr_{\Q[\zeta_{12}]}(i\zeta_3) = 0
  \]
  using the identity $\zeta_3^2 =
  -1-\zeta_3$.
  Since $\{1,\zeta_3, i, i\zeta_3\}$ forms a $\Q$-basis of $\Q[\zeta_{12}]$, we
  can use these traces to compute $\Tr_{\Q[\zeta_{12}]/\Q}(\langle 1\rangle)$ as
  the quadratic form corresponding with the matrix
  \[
    \begin{pmatrix}
      4 & -2 & 0 & 0\\
      -2 & -2 & 0 & 0\\
      0 & 0 & -4& 2\\
      0 & 0 & 2 & 2
    \end{pmatrix},
  \]
  which corresponds with $2H \in \GW(\Q)$. Hence, we get $\beta = 3H$ and
  thus
  \[
    \chi(X/\Q) = 2\langle 1\rangle + 3H + ((6-1)(3-1) + 1 - 3)H = 2\langle 1\rangle + 11H \in \GW(\Q)
  \]
  Since $X$ is already defined over $\Z$, we get $\chi(X/k) = 2\langle 1\rangle
  +  11H$ for all
  fields $k$ of characteristic not two by
  \cite[Theorem~5.11]{BachmannWickelgren23SixFF}.
\end{example}

\begin{rem}
  Example \ref{ex:k3:final-example} shows that the condition of $m$ being
  invertible in
  Proposition \ref{prop:k3:local-euler-k3} is not always satisfied: if $k$ has odd
  characteristic $p > 0$, the section induced by $X_0^{p+1}+
  X_1^{p+1}+X_2^{p+1}$ does not satisfy that $m$ is invertible since $m = p$ in
  this case.
\end{rem}

\appendix
\section{Appendix: Lefschetz Pencils and Maps to Curves}
\label{sec:lefschetz}
In this section, we shall see how
one can construct a morphism on a blow-up of $X$ along a smooth, codimension
two subscheme $L$, such that $L$ intersects $Z$ transversely, using the
existence of Lefschetz pencils. Our discussion of Lefschetz pencils is based
on \cite[Exposé XVII]{SGA7-2}.

Recall the \emph{Veronese embedding} of $\PP^n$ of degree $d$, that is, the closed immersion
\[
i_{n,d}:\PP^n_k\to \PP_k^{N_{n,d}}
\]
defined by $H^0(\PP^n, \OO_{\PP^n}(d))$; here $N_{n,d}=\dim_kH^0(\PP^n, \OO_{\PP^n}(d))-1=
\binom{n+d}{n}-1$.

For a finite type $k$-scheme $Y$ of pure dimension $n\ge1$ over an
algebraically closed field $k$, we follow \cite[Exposé XVII, (1.1)]{SGA7-2} and call
a closed point $y\in Y$ an \emph{ordinary double point} if the completion
$\widehat{\OO}_{Y,y}$ of the local ring $\OO_{Y,y}$ is isomorphic to a
quotient of the power series ring over $k$ of the form $k[[T_1,\ldots,
T_{n+1}]]/(Q(T))$, with $Q(T)\in  k[[T_1,\ldots, T_{n+1}]]$, such that
\begin{itemize}
\item if we write $Q(T)$ as a sum of its homogeneous terms, we have $Q(T)=\sum_{i\ge
    2}Q_i(T)$, and
\item the closed subscheme of $\PP^n_k$ defined by $Q_2(T)$ is
  smooth over $k$.
\end{itemize}
After a change of variables, $Q_2(T)$ will be of the form
\[
  Q_2(T)=\begin{cases} \sum_{i=1}^{[\frac{n+1}{2}]}T_{2i-1}T_{2i}&\text{ for $n$ odd,}\\
    \sum_{i=1}^{[\frac{n}{2}]}T_{2i-1}T_{2i}+T_{n+1}^2&\text{ for $n\ge2$ even.}
  \end{cases}
\]

\begin{definition}
  Let $k$ be an algebraically closed field of characteristic not two. Let
  $i\colon X\hookrightarrow \PP^n_k$ be a closed immersion of a smooth, proper
  $k$-scheme.  Let $L\subset \PP^n_k$ be a codimension two linear subspace. We
  say that \emph{$L$ defines a Lefschetz pencil on $X$} if
  \begin{enumerate}
  \item $L$ intersects $X$ transversely
  \item Let $D=\{H\supset L\mid H\text{ a hyperplane in }\PP^n_k\}$. Then for
    all but finitely many $H\in D$, $H$ intersects $X$ transversely.
  \item If $H\in D$ does not intersect $X$ transversely, then there is a
    single singular point $x\in H\cap X$ and $x$ is an ordinary double point
    on $H\cap X$.
  \end{enumerate}
  The family of hyperplane sections $\{H\cap X\mid H\in D\}$ is called the
  \emph{Lefschetz pencil on $X$ defined by $L$}.
\end{definition}

If $k$ is not algebraically closed, we say that a codimension two linear
subspace $L\subset \PP^n_k$ defines a Lefschetz pencil for $i\colon
X\hookrightarrow \PP^n_k$ if this is the case after base-extension to the
algebraic closure of $k$.

\begin{rem}
  If $L\subset \PP^n$ defines a Lefschetz pencil for $i\colon
  X\hookrightarrow \PP^n$, then $H\cap X$ is smooth in an neighbourhood of
  $L\cap X$ for each hyperplane $H\supset L$. In other words, even if $H\cap
  X$ has a singular point $x$, then $x$ is not in $L\cap X$. Indeed, if $H'$
  is any other hyperplane containing $L$, then $H\cap H'=L$, so $(H\cap
  H')\cap X=H'\cap (H\cap X)$ is a  transverse intersection, which implies
  that  $H\cap X$ must be smooth in a neighbourhood of $L\cap X$.
\end{rem}

\begin{definition}
  Let $i\colon X\hookrightarrow \PP^n_k$ be a closed immersion of a smooth proper
  $k$-scheme $X$. We call $i$ a \emph{Lefschetz embedding} if the set of $L\in
  \Gr(n-2,\PP^n)$ such that $L$ defines a Lefschetz pencil for $i$ is an open
  dense subset of $\Gr(n-2,\PP^n)$.
\end{definition}

\begin{proposition}[{\cite[Exposé~XVII, Th\'eor\`eme~2.5]{SGA7-2}}]
  \label{prop:LefschetzPencil}
  Let $X$ be a smooth, projective scheme over $k$ of dimension $r$, with a
  given projective embedding $i\colon X\hookrightarrow \PP^n_k$. 
  \begin{enumerate}
  \item\label{prop:lef1} A general hyperplane in $\PP^n$ intersects $X$ transversely.
  \item\label{prop:lef2} For all $d\ge1$ if $k$ has characteristic zero, or
    $d\ge2$ if $k$ has characteristic $\neq2$, or $d\ge 3$ in general, the
    closed immersion $i_{n,d}\circ i\colon X\hookrightarrow \PP^{N_{n,d}}_k$
    is a Lefschetz embedding.
  \item\label{prop:lef3} Let $i_Z\colon Z\hookrightarrow X$ be a smooth closed
    subscheme of $X$ of dimension $r'<r$. Then for all $d\gg0$, the closed
    immersions $i_{n,d}\circ i$ and $i_{n,d}\circ i\circ i_Z$ are Lefschetz
    embeddings. Moreover, there is a dense open subset $U\subset
    \Gr(N_{n,d}-2, \PP^{N_{n,d}})$ such that each $L\in U$ defines a Lefschetz
    pencil for  $i_{n,d}\circ i$ and $i_{n,d}\circ i\circ i_Z$, and in
    addition, for all hyperplanes $H\supset L$ with singular intersection
    $H\cap X$, the singular point of $H\cap X$ is contained in $X\setminus Z$,
    and $H\cap Z$ is smooth over $k$.
  \end{enumerate}
\end{proposition}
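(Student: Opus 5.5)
The plan is to reduce everything to statements about the dual projective space $\check\PP := (\PP^{N})^\vee$ of hyperplanes, with $N = N_{n,d}$. A codimension two linear subspace $L$ corresponds to a line $\ell_L \subset \check\PP$, namely the pencil of hyperplanes $H \supset L$. I will use the following standard fact: a general line in $\check\PP$ misses any given closed subset of codimension $\ge 2$, and meets a hypersurface only in finitely many points. We may base change to $\bar k$, since all conditions are geometric and the open sets obtained are defined over $k$. Part \ref{prop:lef1} is Bertini's theorem. The hyperplanes tangent to $X$ somewhere form the image of the incidence variety
\[
  \{(x,H)\mid x\in X,\ T_xX\subset H\},
\]
which has dimension $r + (N-1-r) = N-1$. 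Hence this image is a proper closed subset, and a general $H$ lies outside it. Part \ref{prop:lef2} is exactly \cite[Exposé~XVII, Théorème~2.5]{SGA7-2}, and I will simply cite it.

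For part \ref{prop:lef3}, I first apply part \ref{prop:lef2} to $i_{n,d}\circ i$ and to $i_{n,d}\circ i\circ i_Z$. This gives two dense opens $U_X, U_Z\subset \Gr(N-2,\PP^{N})$ of Lefschetz pencils, and I set $U_0 = U_X\cap U_Z$, which is again dense open. It then remains to shrink $U_0$ so that the two extra conditions hold. For the condition that the singular point lies off $Z$, consider
\[
  I_1=\{(z,H)\mid z\in Z,\ T_zX\subset H\}.
\]
For $d\gg0$ the Veronese embedding imposes independent conditions, so each fibre over $z$ is a linear space of dimension $N-1-r$. Thus $\dim I_1 = r' + N-1-r \le N-2$, and its image $B_1\subset\check\PP$ has codimension $\ge 2$.

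For the condition that $H\cap Z$ is smooth whenever $H\cap X$ is singular, consider
\[
  I_2=\{(x,z,H)\mid x\in X\setminus Z,\ z\in Z,\ T_xX\subset H,\ T_zZ\subset H\}.
\]
For $d\gg0$, the embedding separates first-order jets at two distinct points. So for $x\ne z$ the conditions on $H$ are independent, the fibre over $(x,z)$ has dimension $N-1-r-r'$, and therefore $\dim I_2 \le N-1-r-r'+r+r' = N-1$. I will also need to handle the diagonal-type degeneration where $x$ tends to $z$. In the closure of $I_2$, the limit point lies over $Z$ with $T_zX\subset H$, so it already lies in $B_1$. A slightly finer count is needed to see that the image $B_2$ has dimension $\le N-2$. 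One has to check that a generic $H$ in the image has only finitely many pairs $(x,z)$; equivalently, that the dual hypersurfaces $X^\vee$ and $Z^\vee$ have no common component. The plan for this is to use the fact that, for $d\gg0$, a general point of each dual variety is tangent at exactly one point, to the given variety only. Since $X^\vee\ne Z^\vee$, their intersection has codimension $2$.

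Finally, let $U\subset U_0$ be the dense open set of $L$ with $\ell_L\cap(B_1\cup B_2)=\emptyset$. This set is nonempty because lines in $\check\PP$ missing a codimension $\ge 2$ closed subset form a dense open subset of the Grassmannian of lines. Every $L\in U$ then has all the required properties. The main obstacle is the independence-of-conditions statement for $d\gg0$, in particular at pairs of points and near the diagonal, which is what gives the dimension bound on $B_2$. I would prove it using that $\OO(d)$ is $2$-jet ample for $d\gg0$, so that $H^0(\OO(d))$ surjects onto the jets at any length-$\le 4$ subscheme.
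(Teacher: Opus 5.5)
Your strategy works, but the fibre count for $I_2$ is off by one. The ``finer count'' you propose to make up the resulting deficit would not work as stated, and with the correct count it is not needed.

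\textbf{The miscount.} For $x\neq z$:
\begin{itemize}
\item the condition $T_xX\subset H$ is $r+1$ linear conditions on $H$;
\item the condition $T_zZ\subset H$ is $r'+1$ linear conditions, since it includes $z\in H$.
\end{itemize}
When the projective spaces $T_xX$ and $T_zZ$ are skew, these conditions are independent. So the fibre of $I_2$ over $(x,z)$ is $\PP^{N-r-r'-2}$, not of dimension $N-1-r-r'$, and $\dim I_2\le N-2$ outright. Then $\overline{B_2}$ has codimension $\ge 2$ and you are done, for three reasons:
\begin{itemize}
\item $x\neq z$ is automatic on $(X\setminus Z)\times Z$, so no analysis near the diagonal is needed;
\item closing up the constructible set $B_2$ does not raise its dimension;
\item any $H$ with $H\cap X$ singular at $x$ and $H\cap Z$ singular lies in $B_1$ if $x\in Z$, and in $B_2$ otherwise.
\end{itemize}

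\textbf{Why the proposed repair fails.} Having only finitely many pairs $(x,z)$ over a generic $H\in B_2$ would give $\dim B_2=\dim I_2$. That is the wrong direction, so it is not equivalent to $X^\vee$ and $Z^\vee$ sharing no component. Moreover, justifying the latter by ``a general point of $X^\vee$ is tangent to $X$ only'' presupposes exactly the transversality to $Z$ you are trying to prove.

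\textbf{The independence input.} What you need is surjectivity of $H^0(\PP^n,\OO(d))\to \OO_X/\mathfrak{m}_x^2\oplus\OO_X/\mathfrak{m}_z^2$ for all distinct $x,z$. This is a scheme of length $2(r+1)$, so it is neither ``2-jet ampleness'' nor a ``length $\le 4$'' statement. It does hold for $d\ge 3$. Take coordinates with $x=[1:0:\dots:0]$ and $z=[0:1:0:\dots:0]$. The forms $X_0^{d-1}\ell$, with $\ell$ linear, realise every $1$-jet at $x$ and vanish to order $\ge d-1\ge 2$ at $z$; the symmetric construction handles $z$.

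\textbf{Comparison with the paper.} With this correction, your proof of the new part of (3) is genuinely different from the paper's.
\begin{itemize}
\item The paper takes from SGA7 that the dual variety $\check X$ is irreducible, so it only needs one good hyperplane. It blows up a point $x\in X\setminus Z$ and uses very ampleness of $\pi^\ast\OO_X(d)\otimes\OO_{\tilde X}(-2E)$ for $d\gg0$ (Lemma~\ref{lem:VanishingDirectIm}). This produces a hyperplane section with a single ordinary double point at $x$ that meets $Z$ transversally. Hence the bad locus is a proper closed subset of $\check X$, and $\check X\not\subset\check Z$, which gives codimension $\ge 2$.
\item Your incidence count avoids both the irreducibility of $\check X$ and the blow-up lemma. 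It shows the two extra conditions hold as soon as $d\ge 3$. The bound on $I_1$ even holds for every embedding, since $T_zX$ is always an $r$-plane, so your appeal to $d\gg0$ there is unnecessary. The only remaining dependence on $d$ is through part (2).
\item The paper's route needs control at only one point at a time, but pays for it with $d\gg0$ and the SGA7 irreducibility input.
\end{itemize}
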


\begin{proof}
  We may take $k$ to be algebraically closed. The statements
    \eqref{prop:lef1} and \eqref{prop:lef2} are \cite[Exposé~XVII,
    Th\'eor\`eme~2.5]{SGA7-2}. For \eqref{prop:lef3}, given an embedding
    $i:X\hookrightarrow \PP^n_k$, we have the dual projective space
    $\check{\PP}^n$ of hyperplanes in $\PP^n$, and the dual variety
    $\check{X}\subset \check{\PP}^n$ of hyperplanes that are tangent to $X$ at
    some point, that is, whose intersection with $X$ is singular. By
    \cite[Exposé~XVII, Proposition~3.1.4]{SGA7-2}, $\check{X}$ is irreducible
    of dimension $r-1$, and $i$ is a Lefschetz embedding if and only if the
    subset of $H\in \check{X}(\bar{k})$ such that $H\cap X$ has a single point
    $x$ of non-transverse intersection, with $x\in H\cap X$ an ordinary double
    point, defines a dense open subset of $\check{X}$. Moreover, this subset
    is always open (but may be empty).

    Applying \eqref{prop:lef1} and \eqref{prop:lef2}  for both $i$ and $i\circ
    i_Z$, we see that all the conditions of \eqref{prop:lef3} are satisfied,
    except possibly the condition that the singularity in $H\cap X$ is not in
    $Z$. We will show that the set of hyperplanes $H\in \check{X}$, such that
    the singular locus of $H\cap X$ is disjoint from $Z$, is a dense open
    subset of $\check{X}$. As this subset is clearly open, we need only show
    it is non-empty.

    Choose a closed point $x\in X\setminus Z$, let $\m_x\subset \OO_{X,x}$ be
    the maximal ideal and consider the coherent sheaves
    $\F_d:=\m_x^2\otimes_{\OO_X}\OO_X(d)$. Consider the blow-up $\pi\colon\tilde{X}\to X$ of $X$ at
    $x$ with exceptional divisor $E$. By
    Lemma~\ref{lem:VanishingDirectIm} below,
    $\F_d=\pi_*(\pi^\ast\OO_X(d)\otimes_{\OO_{\tilde{X}}}\OO_{\tilde{X}}(-2E))$
    and the canonical map $H^0(X, \F_d)\to H^0(\tilde{X},
    \pi^\ast\OO_X(d)\otimes_{\OO_{\tilde{X}}}\OO_{\tilde{X}}(-2E))$ is an
    isomorphism, for all $d$. Moreover, there is an integer $d_0\ge0$ such
    that for all $d\ge d_0$,
    $\pi^\ast\OO_X(d)\otimes_{\OO_{\tilde{X}}}\OO_{\tilde{X}}(-2E)$ is a very
    ample invertible sheaf on  $\tilde{X}$. Finally, increasing $d_0$ if
    necessary, we may assume that $\F_d$ is generated by global sections and
    the restriction map $H^0(X,\OO(d))\to H^0(X, \OO(d)\otimes \OO_X/\m_x^2)$
    is surjective.

    The isomorphism $H^0(X, \F_d)\cong H^0(\tilde{X},
    \pi^\ast\OO_X(d)\otimes_{\OO_{\tilde{X}}}\OO_{\tilde{X}}(-2E))$ can be
    interpreted geometrically as follows. Let $s$ be a global section of
    $\F_d$. Via the inclusion $\F_d\subset \OO_X(d)$, we may consider $s$ as a
    global section of $\OO_X(d)$. The fact that $s$ comes from $\F_d$ implies
    that the divisor $V(s)$ of $s$ has a singular point at $x$. The fact that
    the restriction map $H^0(X,\OO(d))\to H^0(X, \OO(d)\otimes \OO_X/\m_x^2)$
    is surjective says that the inclusion $\PP(H^0(X, \F_d))\hookrightarrow
    \PP(H^0(X,\OO_X(d))$ identifies $\PP(H^0(X, \F_d))$ with the linear
    subsystem of $\PP(H^0(X,\OO_X(d))$ consisting of all divisors that have a
    singular point at $x$. The fact that $\F_d$ is generated by global
    sections  says that, after trivializing $\OO_X(d)$ in a neighbourhood of
    $x$, the induced restriction map $H^0(X, \F_d)\to \m_x^2/\m_x^3$ is
    surjective.  In terms of the divisors $V(s)$, this says that for all $s$
    in a dense, open subscheme of $H^0(X, \F_d)$, the divisor $V(s)$ has an
    ordinary double point at $x$.  If $s$ maps to $s'\in
    H^0(\tilde{X}(\pi^\ast\OO_X(d)\otimes_{\OO_{\tilde{X}}}\OO_{\tilde{X}}(-2E))$
    under the isomorphism $H^0(X, \F_d)\cong
    H^0(\tilde{X}(\pi^\ast\OO_X(d)\otimes_{\OO_{\tilde{X}}}\OO_{\tilde{X}}(-2E))$,
    we have  $V(s)=\pi_\ast(V(s'))$, so 
    there is a dense open subscheme $U'_x$ of $H^0(\tilde{X},
    \pi^\ast\OO_X(d)\otimes_{\OO_{\tilde{X}}}\OO_{\tilde{X}}(-2E))$ such that,
    for all $s'\in U'_x$, the pushforward $\pi_\ast(V(s'))$ of the divisor
    $V(s')$ of $s'$ has an ordinary double point at $x$. On the other hand,
    since $ \pi^\ast\OO_X(d)\otimes_{\OO_{\tilde{X}}}\OO_{\tilde{X}}(-2E)$ is
    very ample, it follows from \eqref{prop:lef1} that for a general $s'\in
    H^0(\tilde{X},
    \pi^\ast\OO_X(d)\otimes_{\OO_{\tilde{X}}}\OO_{\tilde{X}}(-2E))$, the
    divisor $V(s')$ is smooth on $\tilde{X}$. Translating back to $H^0(X,
    \F_d)$, since $\pi\colon \tilde{X}\to X$ is an isomorphism away from $x$,
    we see that, after shrinking $U_x$ if necessary, for each $s\in U_x\subset
    H^0(X, \F_d)\subset H^0(X, \OO_X(d))$ the divisor $V(s)$ is smooth on
    $X\setminus\{x\}$ and has an ordinary double point at $x$. Similarly,
    since $x$ is not in $Z$, we may identify $Z$ with the closed subscheme
    $\pi^{-1}(Z)\subset \tilde{X}$. Applying (\ref{prop:lef1}) again, we see that for a
    general $s'\in H^0(\tilde{X},
    \pi^\ast\OO_X(d)\otimes_{\OO_{\tilde{X}}}\OO_{\tilde{X}}(-2E))$,
    $V(s')\cap Z$ is a smooth codimension one closed subscheme of $Z$.

    In particular, in the dual variety $\check{X}$ for the embedding
    $i_{n,d}\circ i$, there is point $p\in \check{X}$ corresponding with a
    hyperplane $H$ such that $H\cap X$ has the single singular point $x$, the
    point $x$ is an ordinary double point on $H\cap X$, and $x$ is not in
    $Z$. This shows that the open subset $\check{X}^0$ of such points of
    $\check{X}$ is non-empty, hence, a dense open subset, as claimed. In
    particular, the closed subset $\overline{\check{X}}:=\check{X}\setminus
    \check{X}^0$ is a proper closed subset of $\check{X}$, hence, has
    codimension at least two in $\PP^{N_{n,d}}$. Moreover, $p$ is not in the
    dual variety $\check{Z}$ for the embedding  $i_{n,d}\circ i\circ i_Z$, so
    the irreducible subscheme
    $\check{Z}$ does not contain $\check{X}$. Thus, replacing $\check{X}^0$
    with $\check{X}^0\setminus \check{Z}$ and changing notation, we may assume
    that for each $p\in \check{X}^0$, the corresponding hyperplane $H$ has
    smooth intersection $H\cap Z$ with $Z$.

    A codimension two linear subspace $L\subset \PP^{N_{n,d}}$ corresponds
    with a line $\ell$ in the dual projective space $\check{\PP}^{N_{n,d}}$,
    and by the above, there is a dense open subscheme $V\subset \Gr(1,
    \check{\PP}^{N_{n,d}})$ such that each  $\ell\in V$  will  define a
    Lefschetz pencil for both $i_{n,d}\circ i$ and $i_{n,d}\circ i\circ i_Z$,
    and in addition, $\ell\cap
    \overline{\check{X}}=\emptyset$. In other words, for each hyperplane $H$ in
    the pencil defined by $L$, and  having a singular intersection $H\cap X$, the
    singular point is not in $Z$, and $H\cap Z$ is smooth. Taking $U=\check{V}\subset  \Gr(N_{n,d}-2,
    \PP^{N_{n,d}})$  proves \eqref{prop:lef3}.
\end{proof}

\begin{rem}
  If $k$ is an infinite field, one can always find a Lefschetz pencil satisfying 
  Proposition~\ref{prop:LefschetzPencil} \eqref{prop:lef1}, \eqref{prop:lef2},
  \eqref{prop:lef3}
  and defined over $k$: such Lefschetz pencils correspond with $k$-points in a
  dense open subset $U$ of $\Gr(1, \check{\PP}^{N_{n,d}})$, and as $\Gr(1,
  \check{\PP}^{N_{n,d}})$ contains a dense open subset isomorphic to an affine
  space, $U$ will have infinitely many $k$-points for every infinite field
  $k$. However, if $k$ is a finite field, the arguments above do not show that
  $U(k)$ is non-empty.
\end{rem}

We conclude this appendix on Lefschetz pencils with the statement and proof
of the lemma used in the proof of Proposition~\ref{prop:LefschetzPencil} and
an explanation of how Lefschetz pencils can be used to satisfy the assumption
in Section \ref{sec:coverings-def}.

\begin{lemma}\label{lem:VanishingDirectIm}
  Let $W\subset Y$ be a smooth, closed subscheme of a smooth, separated,
  finite-type $k$-scheme $Y$ and let $\pi\colon\tilde{Y}\to Y$ be the blow-up of $Y$
  along $W$, with exceptional divisor $i\colon E\hookrightarrow \tilde{Y}$. Let
  $\I_W\subset\OO_Y$ denote the ideal sheaf of $W$ and
  $\OO_{\tilde{Y}}(-E)=\I_{\tilde{E}}\subset\OO_{\tilde{Y}}$ the ideal sheaf
  of $E$, an invertible sheaf on $\tilde{Y}$.
  \begin{enumerate}
  \item\label{lem:VanishingDirectIm1} For all $r\ge 0$,  we have
    $\pi_\ast\OO_{\tilde{Y}}(-rE)=\I_W^r$ and
    $R^i\pi_\ast\OO_{\tilde{Y}}(-rE)=0$ for $i>0$.
  \item\label{lem:VanishingDirectIm2} For each invertible sheaf $\mathcal{L}$
    on $Y$, the map $\pi_\ast$ induces an isomorphism $H^i(Y,
    \I_W^r\otimes\mathcal{L})\cong H^i(\tilde{Y},
    \OO_{\tilde{Y}}(-rE)\otimes\pi^\ast\mathcal{L})$.
  \item\label{lem:VanishingDirectIm3} For all $r\ge1$, the sheaf
    $\OO_{\tilde{Y}}(-rE)$ is $\pi$-relatively very ample.
  \item\label{lem:VanishingDirectIm4} Let $\mathcal{L}$ be a very ample
    invertible sheaf on $Y$ and fix $r\ge1$. Then for all $d\gg0$, the sheaf
    $\OO_{\tilde{Y}}(-rE)\otimes\pi^\ast\mathcal{L}^{\otimes d}$ is very ample
    on $\tilde{Y}$.
  \end{enumerate}
\end{lemma}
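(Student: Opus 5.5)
The plan is to reduce everything to the local structure of the blow-up along the smooth centre $W$ of codimension $c$. All four statements are Zariski-local on $Y$, except for (\ref{lem:VanishingDirectIm2}) and (\ref{lem:VanishingDirectIm4}), which follow formally from the local ones. We may therefore assume $Y=\Spec A$ is affine and $\I_W=I$ is generated by a regular sequence $f_1,\dots,f_c$. Then $\tilde Y=\Proj(\bigoplus_{m\ge0}I^m)$, and $\OO_{\tilde Y}(-E)=\OO_{\tilde Y}(1)$ is the tautological bundle. Consequently $\OO_{\tilde Y}(-rE)=\OO_{\tilde Y}(r)$.

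\emph{Step 1: statement (\ref{lem:VanishingDirectIm1}).} Since $f_\ast$ is a regular sequence, the Rees algebra is the quotient
\[
A[T_1,\dots,T_c]/\bigl(f_iT_j-f_jT_i\bigr).
\]
Thus $\tilde Y\subset \PP^{c-1}_Y$ is cut out by a Koszul-type complex. I would compute $H^i(\tilde Y,\OO(r))$ via the Čech complex on the standard cover $D_+(T_i)$, or via the standard fact for blow-ups of regular ideals: the Rees algebra is Cohen--Macaulay and $a$-invariant negative. This gives $H^0(\tilde Y,\OO(r))=I^r$ and $H^i=0$ for $i>0$ and $r\ge0$.

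A more hands-on route is induction on $r$ using the sequence
\[
0\to\OO(-(r+1)E)\to\OO(-rE)\to i_\ast\OO_E(r)\to 0.
\]
Here $E=\PP(N_{W/Y})\to W$ is a projective bundle, so $R^j\pi_\ast\OO_E(r)=0$ for $j>0$ and $\pi_\ast\OO_E(r)=\Sym^r(N^\vee)=I^r/I^{r+1}$. For the base case $r=0$ one needs $\pi_\ast\OO_{\tilde Y}=\OO_Y$ (normality and birationality) and $R^i\pi_\ast\OO_{\tilde Y}=0$ (rationality of smooth blow-ups, via the formal function theorem and the same sequence for negative twists). Going upward, the long exact sequences give the vanishing of higher direct images and the inclusion $\pi_\ast\OO(-(r+1)E)\subset I^r$. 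It is then identified with the kernel of $I^r\to I^r/I^{r+1}$, which is $I^{r+1}$.

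\emph{Step 2: statement (\ref{lem:VanishingDirectIm2}).} By the projection formula, $R^i\pi_\ast(\OO(-rE)\otimes\pi^\ast\mathcal L)=R^i\pi_\ast\OO(-rE)\otimes\mathcal L$. The Leray spectral sequence degenerates by Step 1, which yields the isomorphism on $H^i$.

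\emph{Steps 3 and 4.} For (\ref{lem:VanishingDirectIm3}), locally $\tilde Y=\Proj$ of the Rees algebra, which is generated in degree one. Hence $\OO(1)=\OO(-E)$ is $\pi$-very ample, and so are its positive powers, through the Veronese embedding. For (\ref{lem:VanishingDirectIm4}), I would use the standard fact: if $\mathcal M$ is $\pi$-very ample and $\mathcal L$ is ample on $Y$, then $\mathcal M\otimes\pi^\ast\mathcal L^{d}$ is very ample for $d\gg0$ (Hartshorne II, Exercise 7.14 / EGA II 4.6.13). Concretely, $\pi_\ast\OO(-rE)\otimes\mathcal L^d=I_W^r\otimes\mathcal L^d$ is globally generated for $d\gg0$, giving a closed immersion of $\tilde Y$ into $\PP(H^0)\times$ something. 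One then composes with Segre and the embedding by $\mathcal L$.

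I expect the main obstacle to be the base case of Step 1, namely $R^i\pi_\ast\OO_{\tilde Y}=0$ and $\pi_\ast\OO_{\tilde Y}=\OO_Y$. I would prove it with the theorem on formal functions, reducing to the vanishing of $H^i(E,\OO_E(m))$ for $m\ge0$, $i>0$ on the projective bundle $E$. This uses $\OO_E(-E)=\OO_E(1)$, and the vanishing holds since $E\to W$ is a $\PP^{c-1}$-bundle.
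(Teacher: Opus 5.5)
Your proposal is correct. For (2) and (3) it is the paper's argument: projection formula plus Leray for (2), and for (3) $\tilde Y=\Proj\bigoplus_m\I_W^m$ with the Rees algebra generated in degree one, followed by a Veronese re-embedding for $r>1$. The differences lie in (1) and (4).

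\textbf{Statement (1).} The paper gives no argument here and simply cites Lazarsfeld, Lemma~4.3.16. You instead prove it locally, by induction along $0\to\OO_{\tilde Y}(-(r+1)E)\to\OO_{\tilde Y}(-rE)\to i_\ast\OO_E(r)\to0$ on the projective bundle $E=\PP(N_{W/Y})$, using formal functions for the base case. This is self-contained. The one point you should make explicit is that the restriction $\I_W^r=\pi_\ast\OO_{\tilde Y}(-rE)\to\pi_\ast\OO_E(r)\cong\I_W^r/\I_W^{r+1}$ is the canonical projection. This follows from functoriality of $\Proj$ for the surjection of the Rees algebra onto $\operatorname{gr}_{\I_W}\OO_Y$. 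That single fact gives both $R^1\pi_\ast\OO_{\tilde Y}(-(r+1)E)=0$ and $\pi_\ast\OO_{\tilde Y}(-(r+1)E)=\I_W^{r+1}$.

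You could also run the same sequence downwards from $r\gg0$. There, relative Serre vanishing holds because $\OO_{\tilde Y}(-E)=\OO_{\tilde Y}(1)$ is $\pi$-ample, and $\I_W^r\to\pi_\ast\OO_{\tilde Y}(-rE)$ is an isomorphism because a graded module agrees with its $\Gamma_\ast$ in large degrees. This version would spare you both the formal function theorem and the normality argument.

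\textbf{Statement (4).} The paper builds the embedding by hand. It covers $Y$ by $U_i=Y\setminus V(t_i)$ and takes relative embeddings $\pi^{-1}(U_i)\hookrightarrow U_i\times\PP^n$. It extends their defining sections by multiplying with $t_i^{d_1}$, then composes with $t\circ\pi$ and a Segre map. It ends with ``very ample tensor globally generated is very ample''.

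You instead apply Serre's global generation theorem directly to the coherent sheaf $\I_W^r\otimes\mathcal L^d$. With $V=H^0(Y,\I_W^r\otimes\mathcal L^d)$, you then use the chain $\tilde Y=\Proj\bigoplus_m\I_W^{rm}\hookrightarrow\PP(\I_W^r\otimes\mathcal L^d)\hookrightarrow\PP(V)\times Y$. The paper's $t_i^{d_1}$ trick is precisely the proof of Serre's theorem. So your version is shorter and handles all large $d$ at once. The paper's version is more explicit and needs only relative very ampleness over each $U_i$.
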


\begin{proof}
  \eqref{lem:VanishingDirectIm1} is \cite[Lemma 4.3.16]{Laz04PosI} and
  \eqref{lem:VanishingDirectIm2} follows from the projection formula
  \[
    R^i\pi_\ast(\OO_{\tilde{Y}}(-rE)\otimes_{\OO_{\tilde{Y}}}\pi^\ast\mathcal{L})\cong 
    R^i\pi_\ast(\OO_{\tilde{Y}}(-rE))\otimes_{\OO_{Y}}\mathcal{L},
  \]
  \eqref{lem:VanishingDirectIm1}, and the Leray spectral sequence.

  For 
  \eqref{lem:VanishingDirectIm3}, it follows from \eqref{lem:VanishingDirectIm1} that 
  \[
    \pi_\ast\Sym^\ast\OO_{\tilde{Y}}(-rE)=\pi_\ast(\oplus_{i\ge0}\OO_{\tilde{Y}}(-riE))=\oplus_{i\ge0}\I_W^{ri}.
  \]
  Since  $\tilde{Y}=\Proj_{\OO_Y}\oplus_{i\ge0}\I_W^{i}$ by definition, this
  settles the case of $r=1$ and for general $r$, note that
  $\Proj_{\OO_Y}\oplus_{i\ge0}\I_W^{ir}$ is just the image of
  $\Proj_{\OO_Y}\oplus_{i\ge0}\I_W^{i}$ under a relative Segre embedding of
  degree $r$, which takes care of the general case.

  Finally, we prove \eqref{lem:VanishingDirectIm4}. Let $t_0,\ldots, t_m$ be
  a basis for $H^0(Y, \mathcal{L})$ and let $U_i=Y\setminus V(t_i)$. Since
  $\mathcal{L}$ is very ample $U_0,\ldots, U_m$ is an affine open cover of
  $Y$; let $V_i=\pi^{-1}(U_i)$. By the equivalence of (3) and (5) for $d=1$ in
  \cite[\href{https://stacks.math.columbia.edu/tag/01VT}{Tag~01VT}]{stacks-project}, there is an $n\ge1$ such that,  for
  each $i$, there are  $s_0^{(i)},\ldots, s_n^{(i)}\in H^0(V_i,
  \OO_{\tilde{Y}}(-rE))$  defining a closed immersion $f_i\colon V_i\to
  U_i\times_k\PP^n$ over $U_i$. But then there is an integer $d_1\ge0$ such
  that $s_j^{(i)}t_i^{d_1}$ extends (uniquely) to
  $H^0(\tilde{Y}, \OO_{\tilde{Y}}(-rE)\otimes\pi^\ast\mathcal{L}^{\otimes
    d_1})$, and thus, the tuple of sections
  $(\ldots,s_j^{(i)}t_i^{d_1},\ldots)_{i=0,\ldots, m, j=0,\ldots, n}$ defines a
  closed immersion $f\colon \tilde{Y}\to Y\times \PP^N$, with
  $N=(m+1)(n+1)-1$. Since $\mathcal{L}$ is very ample, the tuple $(t_0,\ldots,
  t_m)$ defines a closed immersion $t\colon Y\to \PP^m_k$, giving us the
  closed immersion $(f,t\circ\pi):\tilde{Y}\to
  \PP^N_k\times_k\PP^m_k$. Composing with the Segre embedding $i_{N,m}\colon
  \PP^N_k\times_k\PP^m_k\to \PP^M$, with $M=(N+1)(m+1)-1$, we have the closed
  immersion $i_{N,m}\circ(f,t\circ\pi)\colon \tilde{Y}\to\PP^M_k$ defined by the tuple of sections $(\ldots,
  s_j^{(i)}t_i^{d_1}t_\ell,\ldots)_{i,j,\ell}$,  which shows that
  $\OO_{\tilde{Y}}(-rE)\otimes\pi^\ast\mathcal{L}^{\otimes d_1+1}$ is very
  ample. Since $\mathcal{L}$ is generated by global sections, we find that
  $\OO_{\tilde{Y}}(-rE)\otimes\pi^\ast\mathcal{L}^{\otimes d}$ is very ample for
  all $d\ge d_0:=d_1+1$.
\end{proof}

\begin{rem}
  \label{rem:map-construction}
  Let $f\colon Y\to X$ be an $n$-fold geometrically cyclic covering branched at
  $Z \subset X$. Let $L$ be the defining line bundle and let $s\colon X \to
  L^{\otimes n}$ be the section defining $Z$.
  By picking a closed immersion $X \to \PP^N$ and a Lefschetz pencil $L_0
  \subset \PP^{N}$ as in
  Proposition~\ref{prop:LefschetzPencil}~(\ref{prop:lef3}), we get a morphism
  $\rho\colon \Bl_{L_0\cap X}X \to \PP^1$ from the blow-up $\pi\colon
  \Bl_{L_0\cap X}X\to X$ of $X$ along
  $L_0\cap X$ to $\PP^1$. The closed immersion
  $Z\hookrightarrow X$ induces a closed immersion $\Bl_{Z\cap L_0}Z \hookrightarrow
  \Bl_{X\cap L_0}X$. Since $\Bl_{Z\cap L_0}Z$ is the preimage of $Z$ under
  $\pi$, we find that $\Bl_{Z\cap L_0}Z$ is the vanishing locus of the
  pullback section $\pi^\ast s\colon \Bl_{X\cap L_0}X \to \pi^\ast
  L$. Furthermore, the square
  \[
    \begin{tikzcd}
      \Bl_{f^{-1}(X\cap L_0)}Y \ar[r]\ar[d] & \pi^\ast L\ar[d, "m_n"]\\
      \Bl_{X\cap L_0}X \ar[r, "\pi^\ast s"] & \pi^\ast L^{\otimes n}
    \end{tikzcd}
  \]
  is a pullback square since blowing up commutes with flat base-change by
  \cite[\href{https://stacks.math.columbia.edu/tag/0805}{Tag~0805}]{stacks-project}.
  In particular $\Bl_{f^{-1}(X\cap L_0)}Y \to \Bl_{X\cap L_0}X$ is an $n$-fold
  geometrically cyclic covering branched at $\Bl_{Z\cap L_0}Z$. Moreover,
  note that by assumption on $L_0$, the map $\rho$ only has finitely many
  critical points, all of which lie outside of $\Bl_{Z\cap L_0}Z$ and the
  restriction of $\rho$ to $\Bl_{Z\cap L_0}Z$ also only has finitely many
  critical points.
\end{rem}

\printbibliography[heading=bibintoc]

~\\
	
\noindent Louisa F. Br\"oring \\
Universit\"at Duisburg-Essen \\
Fakult\"at f\"ur Mathematik, Thea-Leymann-Str. 9, 45127 Essen, Germany \\
E-Mail: \href{mailto:louisa.broering@uni-due.de}{louisa.broering@uni-due.de}\\ \\ 

\noindent Keywords: motivic homotopy theory, refined enumerative geometry, other fields\\ 
Mathematics Subject Classification: 14G27, 14N10, 14F42
\end{document}